\theoremstyle{plain}
\newtheorem{thm}{Theorem}[section]
\theoremstyle{definition}
\newtheorem{cor}[thm]{Corollary}
\newtheorem{lemma}[thm]{Lemma}
\newenvironment{customthm}[1]
  {\innercustomthm}
  {\endinnercustomthm}
\begin{document}
\title{On factorizations into coprime parts}
\author{Matthew Just and Noah Lebowitz-Lockard}
\date{}
\maketitle

\begin{abstract} Let $f(n)$ and $g(n)$ be the number of unordered and ordered factorizations of $n$ into integers larger than one. Let $F(n)$ and $G(n)$ have the additional restriction that the factors are coprime. We establish the asymptotic bounds for the sums of $F(n)^\beta$ and $G(n)^\beta$ up to $x$ for all real $\beta$ and the asymptotic bounds for $f(n)^\beta$ and $g(n)^\beta$ for all negative $\beta$.
\end{abstract}

\section{Introduction}

Let $f(n)$ and $g(n)$ be the number of unordered and ordered factorizations of $n$ into integers larger than $1$. In 1927, Oppenheim \cite{O} found an asymptotic formula for the sum of $f(n)$, namely
\[\sum_{n \leq x} f(n) \sim \frac{1}{2\sqrt{\pi}} \frac{x\exp(2\sqrt{\log x})}{(\log x)^{3/4}},\]
which Szekeres and Tur{\' a}n \cite{SzTu} rediscovered a few years later. Soon afterwards, Kalm{\' a}r \cite{K} proved that
\[\sum_{n \leq x} g(n) \sim -\frac{1}{\rho\zeta'(\rho)} x^\rho,\]
where $\zeta$ is the Riemann zeta function and $\rho \approx 1.73$ is the unique solution in $(1, \infty)$ to $\zeta(\rho) = 2$. More precisely, Hwang \cite{Hw} proved that
\[\sum_{n \leq x} g(n) = -\frac{1}{\rho \zeta'(\rho)} x^\rho + O(x^\rho \exp(-c (\log_2 x)^{(3/2) - \epsilon}))\]
for a positive constant $c$ and all positive $\epsilon$. (We use $\log_k x$ to refer to the $k$th iterate of the natural logarithm.)

We may also put restrictions on our factorizations. Let $F(n)$ and $G(n)$ be the number of unordered and ordered factorizations of $n$ into pairwise coprime integers larger than one. Though Warlimont did not find asymptotic formulae for the sums of $F(n)$ and $G(n)$, he did establish upper and lower bounds \cite{Wa}. He proved that there exist positive constants $c_1, c_2, c_3, c_4$ such that
\[x\exp\left(c_1 \sqrt{\frac{\log x}{\log_2 x}}\right) \ll \sum_{n \leq x} F(n) \ll x\exp\left(c_2 \sqrt{\frac{\log x}{\log_2 x}}\right),\]
\[x\exp\left(c_3 \frac{\log x}{\log_2 x}\right) \ll \sum_{n \leq x} G(n) \ll x\exp\left(c_4 \frac{\log x}{\log_2 x}\right).\]

We refine Warlimont's estimates and obtain the following results.

\begin{thm} \label{F beta = 1} We have
\[\sum_{n \leq x} F(n) = x\exp\left((c + o(1))\sqrt{\frac{\log x}{\log_2 x}}\right),\]
where
\[c = 2\sqrt{2} e^{-\gamma/2} \approx 2.12,\]
with $\gamma \approx 0.577$ referring to the Euler-Mascheroni constant.
\end{thm}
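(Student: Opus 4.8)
The plan is to convert the sum into one over Bell numbers and then carry out a delicate saddle-point-type analysis. First, $F(n)$ depends only on $\omega(n)$: in any factorization of $n$ into pairwise coprime parts larger than one, each part is a product of some of the prime powers $p^{v_p(n)}$ exactly dividing $n$; conversely every partition of the set of primes dividing $n$ yields such a factorization; and coprime parts larger than one are automatically distinct. So this is a bijection, $F(n)=B_{\omega(n)}$ (the $\omega(n)$-th Bell number), and
\[\sum_{n\le x}F(n)=\sum_{n\le x}B_{\omega(n)}=\sum_{k\ge0}\pi_k(x)B_k,\qquad \pi_k(x):=\#\{n\le x:\omega(n)=k\}.\]
Everything now turns on how the super-exponential growth of $B_k$ trades against the decay of $\pi_k(x)$ once $k$ passes the normal order $\log_2 x$.

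I would use the standard Bell asymptotics $\log B_k=k\log k-k\log_2 k-k+o(k)$ — in fact its next term, which enters the constant. Because $B_k$ eventually exceeds every fixed power of $x$, while $\pi_k(x)\le x$ and $\pi_k(x)=0$ once $k>(1+o(1))\log x/\log_2 x$, the sum $\sum_k\pi_k(x)B_k$ is controlled by a short window of $k$ around an extremal index $k^\star$, situated well above $\log_2 x$ but below the maximal order of $\omega$; there $\log\pi_{k^\star}(x)$ falls short of $\log x$ by a quantity comparable to $\log B_{k^\star}$, the two nearly cancel, and the surviving excess is of size $\sqrt{\log x/\log_2 x}$. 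The goal is to evaluate $\pi_{k^\star}(x)B_{k^\star}$, and the whole sum, to within a factor $\exp\!\big(o(\sqrt{\log x/\log_2 x})\big)$.

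For the upper bound I would estimate $\pi_k(x)$ from above uniformly in $k$, sharply enough to see the constant: the key observation is that an $n$ counted by $\pi_k(x)$ is built almost entirely from primes up to about $\exp(\log x/k)$, so the decisive quantities are prime sums (or products) over such ranges, and it is Mertens' theorems — the $e^{-\gamma}$ in $\prod_{p\le y}(1-1/p)\sim e^{-\gamma}/\log y$ — that produce the $e^{-\gamma}$ in $c$. (A crude Rankin-type bound $\pi_k(x)\le x^{1+\delta}\sum_{\omega(n)=k}n^{-1-\delta}$, with $\delta=\delta(k)>0$ small, is a useful first approximation but too lossy for the sharp constant.) One then sums over $k$ — the number of terms costs only $\exp(O(\log_2 x))=\exp\!\big(o(\sqrt{\log x/\log_2 x})\big)$ — and optimizes in $k$; that optimization is a completion of the square, which is exactly what generates $c=2\sqrt2\,e^{-\gamma/2}$. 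For the lower bound I would make the extremal configuration explicit: take $k^\star$ distinct primes distributed over dyadic ranges up to $\exp(\log x/k^\star)$ according to the profile that maximizes the number of resulting squarefree $n\le x$ — matching the upper estimate for $\pi_{k^\star}(x)$ — and use that each such $n$ carries $B_{k^\star}$ coprime factorizations.

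The hard part is the uniform, constant-precise estimation of $\pi_k(x)$ in the exact range that dominates the sum: it sits above the reach of the Landau and Sathe--Selberg asymptotics (which require $k=O(\log_2 x)$) and below the smooth-number regime near $k\approx\log x/\log_2 x$, where $\pi_k(x)$ collapses and the naive Sathe--Selberg extrapolation badly overestimates it; so no off-the-shelf result applies directly. Matching that estimate with a lower-bound construction precise enough to recover the full constant, and verifying that $\sum_k\pi_k(x)B_k$ really concentrates near $k^\star$ (rather than leaking into the tail), are the remaining technical hurdles; once $\pi_k(x)$ is controlled, the optimization yielding $c=2\sqrt2\,e^{-\gamma/2}$ is routine.
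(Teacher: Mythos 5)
Your reduction is the same as the paper's: $F(n)=B_{\omega(n)}$, hence $\sum_{n\le x}F(n)=\sum_k B_k\,\pi(x,k)$, and since the number of admissible $k$ is $O(\log x/\log_2 x)$ it suffices to evaluate $\max_k B_k\,\pi(x,k)$ up to a factor absorbed by the error term. But the proposal has a genuine gap exactly where you flag the ``hard part'': you never produce the constant-precise estimate for $\pi(x,k)$ in the critical range, and you assert incorrectly that no off-the-shelf result applies there. The optimal $k$ turns out to be of size $\sqrt{(1/2e^\gamma)\log x\log_2 x}$, and the range $(\log_2 x)^2\ll k\ll (\log x)/(\log_2 x)^2$ is covered uniformly by Hildebrand--Tenenbaum (the paper's Theorem \ref{Hildebrand Tenenbaum}, i.e.\ \cite[Cor.~2]{HiT}), whose formula
$\pi(x,k)=\frac{x}{k!\log x}\exp\bigl(k(\log M+\tfrac1M+O(R))\bigr)$ with $M=\log\xi+\log_2\xi-\log L_0-\gamma$ is precisely what carries the Mertens constant $\gamma$ into the answer; the paper simply quotes it, localizes the maximum to $k=(\log x)^{1/2}(\log_2 x)^{C+o(1)}$ via the ratio $B_{k+1}\pi(x,k+1)/(B_k\pi(x,k))$ (Moser--Wyman plus \cite[Cor.~3]{HiT}), and disposes of $k\gg\log x/(\log_2 x)^2$ by a separate lemma resting on Pomerance's bounds. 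Re-deriving such an estimate from scratch by Mertens-type prime counting, as you propose, is a substantial project in itself, and your lower-bound construction is likewise only sketched.

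A second, related gap: the final optimization is not a routine ``completion of the square'' at leading order. With $\log B_k=k\log k-k\log_2 k-k+o(k)$ alone, the exponent $B_k\pi(x,k)$ only determines $k=(\log x)^{1/2+o(1)}$ and the leading shape $\exp\bigl((2+o(1))\sqrt{\cdot}\bigr)$-type growth; the actual constant $c=2\sqrt2\,e^{-\gamma/2}$ emerges only after tracking the secondary terms $k\log_2 k/\log k+k/\log k$ in the Bell asymptotic and the $-\gamma$ and $\log 2$ contributions inside $M$, which shift the optimal exponent to $C=\tfrac12-\tfrac{\gamma+\log 2}{2\log_3 x}$, i.e.\ $k\sim\sqrt{(1/2e^\gamma)\log x\log_2 x}$ rather than $k\sim\sqrt{\log x\log_2 x}$ (the latter is Warlimont's choice and loses the constant). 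So while your structural outline matches the paper, the two quantitative inputs that make the theorem true with this specific $c$ --- the uniform $\pi(x,k)$ asymptotic in the critical range and the second-order bookkeeping in the maximization --- are missing from the proposal.
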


In order to state the corresponding result for $G$, we must define two functions. Let $W$ be the Lambert $W$ function, i.e. the inverse of the function $h(z) = ze^z$. Define the exponential integral as
\[\textrm{Ei}(x) = -\int_{-x}^\infty t^{-1} e^{-t} dt.\]

\begin{thm} \label{G beta = 1} Let $w = W(1/(e \log 2))$. We have
\[\sum_{n \leq x} G(n) = x\exp\left((c + o(1)) \frac{\log x}{\log_2 x}\right),\]
where
\[c = w(1 + e^w \mathrm{Ei}(-w)(w + \log w + \log_2 2)) \approx 0.771.\]
\end{thm}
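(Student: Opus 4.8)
The plan is to turn the problem into a question about the distribution of $\omega$ and then into an optimization. Since the parts of a factorization into pairwise coprime integers $>1$ must be products of full prime powers $p^{v_p(n)}$, and coprimality only sees the underlying set of primes, an ordered coprime factorization of $n$ is precisely an ordered set partition of the $\omega(n)$ distinct primes dividing $n$; hence $G(n) = a(\omega(n))$, where $a(k) = \sum_{r=1}^{k} r!\,S(k,r)$ is the $k$-th ordered Bell (Fubini) number. Using the classical $a(k) = (1+o(1))\,k!/\bigl(2(\log 2)^{k+1}\bigr)$ and Stirling, $\log a(k) = k\log\!\bigl(k/(e\log 2)\bigr) + O(\log k)$, so that
\[\sum_{n\le x} G(n) \;=\; \sum_{k\ge 0} a(k)\,N_k(x), \qquad N_k(x) := \#\{\,n\le x : \omega(n) = k\,\},\]
a sum that is empty beyond $k = K(x)$, the largest $k$ with $p_1 p_2\cdots p_k \le x$ (so $K(x) = (1+o(1))\log x/\log_2 x$). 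Because $\log a(k)$ grows like $k\log k$, the sum is dominated not by the typical $k \asymp \log_2 x$ but by $k$ of order $\log x/\log_2 x$ --- i.e.\ essentially the \emph{maximal} order of $\omega$ --- so the whole argument rests on a sharp estimate for $N_k(x)$ in that (non-classical) range.

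The technical heart is therefore a two-sided bound for $\log N_k(x)$ valid uniformly for $k$ up to $K(x)$, sharp to the order $\log x/\log_2 x$ in the exponent. The asymptotics of Landau and Sathe--Selberg only reach $k$ of order $\log_2 x$; for the present range I would prove the lower bound by an explicit construction --- taking $n = (p_1\cdots p_{k-1})\,q$ with $q$ a free prime, and, for the secondary term, letting the small primes wander and the remaining factor be a smooth number --- and the upper bound by estimating the number of prime tuples $q_1 < \cdots < q_k$ with $\prod_i q_i \le x$ by partial summation against $\pi$. This is where the exponential integral appears: after replacing Mertens-type sums $\sum_{p\le T} p^{-1}$ by $\int_2^T \frac{dt}{\log t}$ and rescaling the variable by $\log_2 x$, one meets integrals of the shape $\int_0^{v} \frac{e^{-u}-1}{u}\,du = \mathrm{Ei}(-v) - \gamma - \log v$. (Alternatively one can run everything on the Dirichlet series $D(s) = \tfrac12 + \tfrac{\zeta(s)}{4}\sum_{i\ge 0} 2^{-i}\prod_p\bigl(1 + i p^{-s}\bigr)$, which has an essential singularity at $s = 1$, and recover the summatory function by Perron with a saddle point; the same estimate for the local prime products is needed, and the same $\mathrm{Ei}$ shows up.)

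Granting such an estimate, say $\log N_k(x) = (1-v)\log x + \tfrac{\log x}{\log_2 x}\,\Psi(v) + o\!\bigl(\tfrac{\log x}{\log_2 x}\bigr)$ with $v = k\log k/\log x$ and $\Psi$ explicit, one has $\log a(k) = v\log x - \tfrac{\log x}{\log_2 x}(1+\log_2 2)\,v + o\!\bigl(\tfrac{\log x}{\log_2 x}\bigr)$, the $\log x$ terms cancel, and
\[\log\!\Bigl(\tfrac{1}{x}\!\sum_{n\le x} G(n)\Bigr) \;=\; \frac{\log x}{\log_2 x}\,\max_{v\in(0,1)}\bigl(\Psi(v) - (1+\log_2 2)\,v\bigr) \;+\; o\!\Bigl(\frac{\log x}{\log_2 x}\Bigr).\]
The stationarity condition for the bracket is a transcendental equation that rearranges to $w e^{w} = 1/(e\log 2)$, i.e.\ $w = W\bigl(1/(e\log 2)\bigr)$ --- the only use of the Lambert function --- and evaluating the bracket there, using the equivalent identity $w + \log w + \log_2 2 = -1$, gives exactly $c = w\bigl(1 + e^{w}\mathrm{Ei}(-w)(w + \log w + \log_2 2)\bigr)$. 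The last ingredient is \emph{concentration}: one must show $\sum_k a(k) N_k(x)$ is dominated by $k$ within $o(k^{*})$ of the optimum $k^{*}$, so that the maximum controls the sum; the two tails --- small $k$, where $a(k)$ is too small, and $k$ near $K(x)$, where $N_k(x)$ collapses super-exponentially --- are routine. \emph{The main obstacle} is the uniform estimate for $N_k(x)$ (equivalently for $\sum_{n\le x} j^{\omega(n)}$ with $j \asymp \log x/\log_2 x$) outside the Sathe--Selberg range, pinned down to the correct constant in the exponent --- that constant is precisely what feeds into $c$, and in particular the $\log_3 x$-scale contributions to $\log N_k(x)$ and to $\log a(k)$ must cancel to the required precision.
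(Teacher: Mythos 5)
Your reduction $G(n)=a_{\omega(n)}$, the identification of the critical range $k\asymp \log x/\log_2 x$, the two-tail concentration step, and the final optimization leading to $we^w=1/(e\log 2)$ are all exactly the strategy of the paper (which likewise disposes of $k<\epsilon\log x/\log_2 x$ and $k>(1-\epsilon)\log x/\log_2 x$ by separate lemmas and then maximizes $a_k\pi(x,k)$ over $k=c\log x/\log_2 x$). The problem is that the one ingredient you flag as ``the main obstacle'' --- a two-sided asymptotic for $\pi(x,k)$ with $k=c\log x/\log_2 x$, $c\in(\epsilon,1-\epsilon)$, accurate to the \emph{constant} in the secondary term of size $\log x/\log_2 x$ --- is precisely the content of the theorem, and you neither prove it nor cite it. The paper imports it from Kerner's thesis (Theorem 2.6 here): $\pi(x,k)=x^{\alpha}\bigl(e/(S(\alpha)\log x)\bigr)^{k\alpha}\exp\bigl(O\bigl(k/(\log_2 x)^{1/4}\bigr)\bigr)$ with $\alpha=1+(H(c)+o(1))/\log_2 x$, where $H$ is the inverse of $t\mapsto te^{t}\Gamma(0,t)$; since $\Gamma(0,t)=-\mathrm{Ei}(-t)$, this is the sole source of the exponential integral in the answer, via $c=H^{-1}(w)=-we^{w}\mathrm{Ei}(-w)$. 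Your sketched substitutes (counting prime tuples by partial summation for the upper bound, an explicit construction for the lower bound) would give the leading term $x^{1-c+o(1)}\cdot x^{c}$ but there is no indication they reach the secondary constant $H(c)$; that constant comes out of a genuine saddle-point analysis of $\sum_{n\le x}z^{\omega(n)}$ with $z\asymp\log x/\log_2 x$, and it is the quantity that feeds directly into $c$.

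A second, related gap: you never write down $\Psi(v)$, so the assertions that the stationarity condition ``rearranges to'' $we^{w}=1/(e\log 2)$ and that the bracket evaluates to $w\bigl(1+e^{w}\mathrm{Ei}(-w)(w+\log w+\log_2 2)\bigr)$ cannot be checked from what you have written. In the paper this step is concrete: the coefficient to maximize is $H(c)-c\bigl(H(c)+\log H(c)+\log_2 2\bigr)$, and the differentiation uses the identity $1/H'(c)=c/H(c)+c-1$ satisfied by Kerner's $H$; without knowing the exact form of your $\Psi$ (equivalently, of $H$), the derivation of the Lambert--$W$ equation is an assertion, not a computation. Your consistency check $w+\log w+\log_2 2=-1$ is correct, and the $\log_3 x$-scale cancellation you point to does occur, but as written the proposal establishes the shape $x\exp\bigl((c'+o(1))\log x/\log_2 x\bigr)$ for \emph{some} constant rather than the stated value of $c$.
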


In addition, we consider the $\beta$-th moments of these functions for all $\beta$. Pollack recently estimated the positive moments of $f$. Let
\[L(x) = \exp(\log x \log_3 x/\log_2 x).\]

\begin{thm}[\cite{P}] \label{f beta > 1} For $\beta > 1$,
\[\sum_{n \leq x} f(n)^\beta = \frac{x^\beta}{L(x)^{\beta + o(1)}}.\]
\end{thm}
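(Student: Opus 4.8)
The plan is to prove matching lower and upper bounds. For the lower bound a single term suffices, so it reduces to the known problem of the maximal order of $f$; the upper bound is the harder half, needing Oppenheim's estimate together with a quantitative statement that integers with nearly maximal $f$-value are rare.

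\textbf{Lower bound.} By the solution of Oppenheim's problem on $\max_{n}f(n)$ due to Canfield, Erd\H{o}s and Pomerance, there are integers $n$ with $f(n)=n/L(n)^{1+o(1)}$, and the extremal $n\le x$ may be taken (a suitable divisor of $M!$ with $M\asymp\log x/\log_2 x$) so as to satisfy in addition $n=x/L(x)^{o(1)}$, since the multiplicative gaps in the relevant family are only $(\log x)^{O(1)}=L(x)^{o(1)}$. For such an $n$,
\[\sum_{n\le x}f(n)^\beta\ \ge\ f(n)^\beta\ =\ \left(\frac{x}{L(x)^{1+o(1)}}\right)^{\beta}\ =\ \frac{x^\beta}{L(x)^{\beta+o(1)}}.\]

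\textbf{Upper bound.} Fix a constant $D=D(\beta)$ with $D(\beta-1)>\beta$ and split at $t:=x/L(x)^{D}$. On the range $f(n)\le t$ use $f(n)^\beta\le t^{\beta-1}f(n)$ together with Oppenheim's estimate $\sum_{n\le x}f(n)\ll x\exp(2\sqrt{\log x})=x\,L(x)^{o(1)}$:
\[\sum_{f(n)\le t}f(n)^\beta\ \ll\ t^{\beta-1}x\,L(x)^{o(1)}\ =\ \frac{x^\beta}{L(x)^{D(\beta-1)-o(1)}}\ \le\ \frac{x^\beta}{L(x)^{\beta}}.\]
On the range $f(n)>t$ the key input is
\[\#\{\,n\le x:f(n)>x/L(x)^{u}\,\}\ \le\ L(x)^{\max\{u-1,\,0\}+o(1)}\qquad\text{uniformly for }1\le u\le D,\]
a quantitative refinement of the Canfield--Erd\H{o}s--Pomerance bound $\max_{m\le x}f(m)=x/L(x)^{1+o(1)}$. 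Granting it, write $f(n)=x/L(x)^{u}$ and partition the range $1-o(1)\le u\le D$ into $O(D\log_2 x)=L(x)^{o(1)}$ layers of width $\delta=1/\log_2 x$; on the layer around $u$ there are at most $L(x)^{\max\{u-1,0\}+o(1)}$ integers $n$, each with $f(n)^\beta\le x^\beta/L(x)^{\beta u}$, so that layer contributes at most $x^\beta L(x)^{\max\{u-1,0\}-\beta u+o(1)}\le x^\beta/L(x)^{\beta-o(1)}$, since for $\beta>1$ the exponent $\max\{u-1,0\}-\beta u$ is $\le-\beta+o(1)$ throughout the range. Summing the layers gives $\sum_{f(n)>t}f(n)^\beta\ll x^\beta/L(x)^{\beta-o(1)}$, and combining the two ranges, $\sum_{n\le x}f(n)^\beta\le x^\beta/L(x)^{\beta-o(1)}$, which with the lower bound proves the theorem.

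\textbf{Main obstacle.} The crux is the displayed counting estimate. Its starting point is $f(n)\le B_{\Omega(n)}$ with $\log B_m=m\log m-m\log_2 m-m+o(m)$, which forces $\Omega(n)\ge(1-o(1))\log x/\log_2 x$ once $f(n)$ is a fixed power of $L(x)$ below $x$; the real work is to upgrade this to a structural localization showing that an $n\le x$ with $f(n)$ within $L(x)^{O(1)}$ of the maximum must have its exponent pattern within a bounded perturbation of the Canfield--Erd\H{o}s--Pomerance extremal profile (roughly, exponents $\asymp(\log x/\log_2 x)/p$ on the primes $p\le\log x/\log_2 x$), and then to count the admissible perturbations --- the only cheap ones being on the $\pi(\log x/\log_2 x)=(1+o(1))\log x/(\log_2 x)^{2}=o(\log L(x))$ small primes. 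I expect this localization-and-counting step to be by far the hardest part. It is genuinely needed: the naive bound $\sum_{n\le x}f(n)^\beta\le\bigl(\max_{m\le x}f(m)\bigr)^{\beta-1}\sum_{n\le x}f(n)$ only yields the exponent $\beta-1$ on $L(x)$ and deteriorates as $\beta\to1^{+}$, consistent with the answer changing shape at $\beta=1$, where it is Oppenheim's asymptotic.
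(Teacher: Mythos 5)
Note first that the paper contains no proof of this statement: Theorem \ref{f beta > 1} is quoted from Pollack \cite{P}, and the closest argument actually carried out in the paper is the proof of Theorem \ref{F beta > 1} for $F$. Measured against that template, your lower bound and your treatment of the range $f(n)\le x/L(x)^{D}$ are correct and essentially identical to the paper's handling of $\mathcal{S}_1$ (your condition $D(\beta-1)>\beta$ plays the role of the paper's cutoff $x/L(x)^{2\beta/(\beta-1)}$), and your closing remark correctly identifies why the naive bound $(\max f)^{\beta-1}\sum f$ loses a factor of $L(x)$.

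The gap is the displayed counting estimate $\#\{n\le x: f(n)>x/L(x)^{u}\}\le L(x)^{\max\{u-1,0\}+o(1)}$, which you grant rather than prove. This is not a peripheral lemma: it carries the entire content of the upper bound and is essentially equivalent to the theorem. Writing $N(u)$ for that count, the inequality $N(u)\,(x/L(x)^{u})^{\beta}\le\sum_{n\le x}f(n)^{\beta}$ shows that the theorem (applied with $\beta$ tending to $1^{+}$) forces $N(u)\le L(x)^{(u-1)(1+o(1))}$, while your layering argument shows the converse implication; so the proposal reduces the statement to an equivalent unproved one. For $F$ the paper escapes this because $F(n)=B_{\omega(n)}$, so the count of $n$ with $F(n)$ large is a sum of $\pi(x,k)$ over nearly maximal $k$, which Theorem \ref{Pomerance bound} bounds by $L(x)^{o(1)}$; no such reduction to $\omega(n)$ (or even $\Omega(n)$) is available for $f$, since $f$ depends on the full exponent pattern. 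The ``localization near the extremal profile'' you sketch is exactly the hard step carried out in \cite{P} (building on Canfield--Erd\H{o}s--Pomerance), and without it the proposal is a correct easy half plus a correct reduction, not a proof.
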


\begin{thm}[\cite{P}] \label{f beta < 1} For $\beta \in (0, 1)$,
\[\sum_{n \leq x} f(n)^\beta = x\exp\left((1 + o(1)) \left(\frac{(1 - \beta) \log_2 x}{(\log_3 x)^\beta}\right)^{1/(1 - \beta)}\right).\]
\end{thm}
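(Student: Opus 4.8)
The plan is to treat this as the computation of the $\beta$-th moment $\tfrac1x\sum_{n\le x}f(n)^\beta$, i.e. as a statement about the distribution of $f(n)$ for a random $n\le x$. The structural facts about $f$ I would exploit are: $f(n)$ depends only on the multiset of exponents of $n$; if $n$ is squarefree then $f(n)=B_{\omega(n)}$ exactly, where $B_m$ is the $m$-th Bell number; and in general $f(n)\le B_{\Omega(n)}$, though a finer bound will be needed (a repeated prime should inflate $f$ only by a partition-function factor, not by a full Bell-number jump). Combined with the Bell-number asymptotic $\log B_m=m\log m-m\log_2 m-m+O(m\log_2 m/\log m)$ and the Hardy--Ramanujan/Sathe--Selberg estimates $\pi_w(x):=\#\{n\le x:\omega(n)=w\}\asymp \frac{x}{\log x}\frac{(\log_2 x)^{w-1}}{(w-1)!}$, which I need as two-sided bounds in the relevant range of $w$, the heuristic becomes $\sum_{n\le x}f(n)^\beta\asymp\sum_w\pi_w(x)B_w^\beta$, and the whole problem collapses to maximising $\Psi(w):=\log\pi_w(x)+\beta\log B_w-\log x$ over $w$. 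In contrast to the case $\beta>1$ of Theorem~\ref{f beta > 1}, here no single term dominates --- even $\max_{n\le x}f(n)=x/L(x)^{1+o(1)}$ contributes only $x^{\beta+o(1)}\ll x$ --- so a genuinely distributional argument is unavoidable.

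For the lower bound I would simply discard the non-squarefree $n$: then $\sum_{n\le x}f(n)^\beta\ge\sum_w\pi^{\mathrm{sf}}_w(x)B_w^\beta$ with $\pi^{\mathrm{sf}}_w$ the squarefree count, and I would retain the single term $w=w^*$ that turns out to be optimal. A lower bound for $\pi^{\mathrm{sf}}_{w^*}(x)$ of the expected order follows from an elementary Hardy--Ramanujan-style construction --- for instance, taking $n$ to be a product of $w^*-1$ fixed small primes times a free prime below $x$ divided by their product, or counting directly the squarefree integers assembled from the primes below $(\log x)^{A}$ --- which is legitimate precisely because $w^*=(\log_2 x)^{O(1)}\ll\log x/\log_2 x$ leaves ample room under $x$.

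For the upper bound I would partition $\{n\le x\}$ according to $w=\omega(n)$ and $t=\Omega(n)-\omega(n)$. The part $t=0$ is exactly $\sum_w\pi^{\mathrm{sf}}_w(x)B_w^\beta$. For $t\ge1$ the key observation is that an integer with $\omega(n)=w$ and $t$ extra prime-power factors is at least $2^t$ times the product of its $w$ distinct primes, so $\#\{n\le x:\omega(n)=w,\ \Omega(n)=w+t\}\le\binom{w+t-1}{t}\,\pi^{\mathrm{sf}}_w(x/2^t)$; pairing this with a sharper upper bound of the shape $f(n)\le B_{\omega(n)}\cdot(\text{partition-function factors from the repeated primes})$ and summing the resulting convergent series in $t$ should show the non-squarefree contribution is at most the squarefree one up to a factor $\exp(o(\Psi(w^*)))$. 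It then remains to optimise $\Psi$. Writing $\Psi(w)=w\log_3 x-(1-\beta)w\log w-\beta w\log_2 w+(1-\beta)w-\log_2 x+(\text{lower order})$ and substituting $u=\log w$, the stationary condition becomes $(1-\beta)u+\beta\log u+\beta/u=\log_3 x$, which gives $w^*=e^{u^*}=(1-\beta)^{\beta/(1-\beta)}(\log_2 x)^{1/(1-\beta)}(\log_3 x)^{-\beta/(1-\beta)}(1+o(1))$; substituting back and using the stationarity to cancel the main terms, $\Psi(w^*)=(1-\beta)w^*(1+o(1))=\bigl((1-\beta)\log_2 x/(\log_3 x)^\beta\bigr)^{1/(1-\beta)}(1+o(1))$, which is exactly the claimed exponent.

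I expect the real obstacle to be the upper bound on the non-squarefree part: the trivial bound $f(n)\le B_{\Omega(n)}$ is hopelessly lossy for integers with a large prime power (for instance $f(2^m)=p(m)$, far below $B_m$), so one genuinely needs an upper bound for $f(n)$ that is sharp on squarefree inputs and degrades only polynomially per repeated prime, together with counting of $n$ by $(\omega,\Omega)$ precise enough that these corrections do not disturb the leading constant. The remaining technical burdens are securing the required uniformity of the $\pi_w(x)$ estimates for $w$ a fixed power of $\log_2 x$ (well beyond the classical range $w\le(2-\delta)\log_2 x$), and carrying every iterated-logarithm factor correctly through the optimisation so as to land on $\bigl((1-\beta)\log_2 x/(\log_3 x)^\beta\bigr)^{1/(1-\beta)}$ rather than merely on $(\log_2 x)^{1/(1-\beta)}$.
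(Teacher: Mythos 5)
First, note that this paper contains no proof of the statement: Theorem \ref{f beta < 1} is quoted from Pollack \cite{P}, and the only information the authors record about his argument (in Section 3) is that it proceeds by showing $\max_k B_k^\beta\pi(x,k)$ equals the right-hand side. Your core optimization is consistent with that and is correct: the stationary condition $(1-\beta)u+\beta\log u+\beta/u=\log_3 x$ for $u=\log w$, the resulting $w^*$, and the evaluation $\Psi(w^*)=(1+o(1))(1-\beta)w^*=(1+o(1))\bigl((1-\beta)\log_2 x/(\log_3 x)^\beta\bigr)^{1/(1-\beta)}$ all check out. The lower bound is also fine, and can be simplified: since $f(n)\ge F(n)=B_{\omega(n)}$ for \emph{all} $n$, you do not need squarefree $n$ or $\pi'(x,k)$ at all; you can use $\pi(x,w^*)$ directly, for which Theorem \ref{Pomerance bound 2} supplies a lower bound when $w^*\ge(\log_2 x)^2$ (i.e.\ $\beta\ge 1/2$), with Sathe--Selberg-type estimates covering smaller $\beta$.

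The genuine gap is exactly the one you flag as ``the real obstacle,'' and the specific mechanism you propose cannot close it. Your counting inequality $\#\{n\le x:\omega(n)=w,\,\Omega(n)=w+t\}\le\binom{w+t-1}{t}\pi'_w(x/2^t)$ is correct but fatally lossy: $\binom{w+t-1}{t}\asymp w^t/t!$ while the saving from replacing $x$ by $x/2^t$ is only $2^{-t}$, and $B_{w+t}^\beta/B_w^\beta\approx w^{\beta t}$. Already at $t=1$ the resulting bound exceeds the squarefree term $\pi'_w(x)B_w^\beta$ by a factor of order $w^{1+\beta}$, and summing over $t$ yields an excess of $\exp(w^{1+\beta+o(1)})$, which swamps the main term $\exp((1-\beta)w)$ since $w\to\infty$. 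Moreover, the refined bound you hope for cannot exist in a form strong enough to rescue this counting: for squarefree $m$ one has $f(m^2)\ge B_{2\omega(m)}2^{-3\omega(m)}=B_{\omega(m)}^{2+o(1)}$ (lift set partitions of $2\omega(m)$ labelled copies), so $f(n)$ genuinely degrades by a factor of order $\Omega(n)$ per repeated prime factor, no better. The repair must therefore come from the counting side: assigning an extra exponent to a prime $p$ of $n$ costs a factor $p$, not $2$, so one should write $n=ms$ with $m=\mathrm{rad}(n)$, sum over the excess part $s$ with weight $1/s$, and exploit that $\mathrm{rad}(s)\mid n$ forces $\sum_{p\mid s}1/p\ll\log\omega(n)\ll\log_3 x$; this replaces your per-$t$ cost $(w/2)^t/t!$ by $(O(\log_3 x))^t/t!$, after which even $f(n)\le B_{\Omega(n)}$ gives a total non-squarefree excess of $\exp(O(w^\beta\log_3 x))=\exp(o(w^*))$, with the large-$s$ ranges handled separately. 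As written, your upper bound is not established.
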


We extend Pollack's results to $F$ and $G$. We write their positive moments, then their negative ones.

\begin{thm} \label{F beta > 1} For $\beta > 1$,
\[\sum_{n \leq x} F(n)^\beta = \frac{x^\beta}{L(x)^{2\beta + o(1)}}.\]
\end{thm}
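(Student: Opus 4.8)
The plan is to turn everything into a one‑parameter extremal problem about Bell numbers. A factorization of $n=p_1^{a_1}\cdots p_k^{a_k}$ into pairwise coprime parts larger than $1$ is precisely a partition of the set $\{p_1^{a_1},\dots,p_k^{a_k}\}$ into nonempty blocks, so $F(n)=B_{\omega(n)}$, where $B_m$ is the $m$th Bell number; hence $\sum_{n\le x}F(n)^\beta=\sum_{m\ge 0}B_m^\beta N_m(x)$ with $N_m(x)=\#\{n\le x:\omega(n)=m\}$. I would use the classical asymptotic $\log B_m=m\log m-m\log_2 m-m+o(m)$ (saddle point for $e^{e^z-1}$, with saddle at the Lambert point $W(m)$) and its consequence: writing $m^\ast=\lfloor\log x/\log_2 x\rfloor$ (essentially the largest $m$ with the product of the first $m$ primes at most $x$), one gets $\log B_{m^\ast}=\log x-2\log L(x)+o(\log L(x))$, i.e.\ $B_{m^\ast}=x/L(x)^{2+o(1)}$ — one copy of $L(x)$ coming from $\log m^\ast=\log_2 x-\log_3 x+o(1)$ and the other from the $-m^\ast\log_2 m^\ast$ term. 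The lower bound is then immediate: take $n_0=\prod_{p\le y}p$ with $y$ maximal subject to $n_0\le x$, so $\omega(n_0)=\pi(y)=(1+o(1))\log x/\log_2 x$ and $\sum_{n\le x}F(n)^\beta\ge F(n_0)^\beta=B_{\omega(n_0)}^\beta=x^\beta/L(x)^{2\beta+o(1)}$.

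For the upper bound I would first note that $\operatorname{rad}(n)\le n\le x$ forces $\omega(n)\le(1+o(1))m^\ast$, so only $O(\log x)$ values of $m$ occur. For $m$ below a threshold of the form $(1-\delta_0)m^\ast$ with $\delta_0\asymp\log_3 x/\log_2 x$ (the precise constant depending on $\beta$), the classical Hardy--Ramanujan inequality $N_m(x)\ll\frac{x}{\log x}\frac{(\log_2 x+C)^{m-1}}{(m-1)!}$ already suffices: combined with the Bell asymptotics it gives $B_m^\beta N_m(x)\le x^\beta/L(x)^{2\beta+o(1)}$, in fact with a genuine power‑of‑$x$ saving once $m$ is a constant factor below $m^\ast$ (and trivial estimates handle the smallest $m$).

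The delicate range is the narrow window $(1-\delta_0)m^\ast<m\le m^\ast$, where Hardy--Ramanujan overshoots the target by as much as a factor $L(x)^2$. Here I would exploit that such $n$ are rigidly ``primorial‑like'': $\operatorname{rad}(n)\ge\prod_{i\le m}q_i$ (the product of the first $m$ primes, $q_j$ the $j$th prime), which for $m$ this close to $m^\ast$ is $\ge x/L(x)^{O(1)}$, so $n/\operatorname{rad}(n)$ takes at most $L(x)^{O(1)}$ values; thus $N_m(x)\le L(x)^{O(1)}\cdot\#\{r\le x\text{ squarefree}:\omega(r)=m\}$. One then needs, for $m$ in this window, a bound on the squarefree count that beats Hardy--Ramanujan by the correct power of $L(x)$ (at $m=m^\ast$ it must be $L(x)^{o(1)}$): the multiset of primes of such an $r$ differs from $\{q_1,\dots,q_m\}$ by a multiplicative ``$\log$‑budget'' of only $O(\log L(x))$, and counting the admissible enlargements of the prime indices reduces to a partition‑type count whose answer is $\exp(o(\log L(x)))=L(x)^{o(1)}$. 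Putting the ranges together gives $B_m^\beta N_m(x)\le x^\beta/L(x)^{2\beta+o(1)}$ for every $m$, and summing over the $O(\log x)$ values costs only another $L(x)^{o(1)}$.

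The main obstacle is precisely this near‑maximal window: one must make the ``primorial‑like'' structure quantitative (tracking, against $\beta$, the exact power of $L(x)$ by which $\operatorname{rad}(n)$ is forced to be close to $x$) and, above all, establish the sharp count of squarefree integers up to $x$ with $m=(1+o(1))\log x/\log_2 x$ prime factors — the generic Hardy--Ramanujan estimate, accurate for smaller $m$, is too lossy here by exactly the factor separating this theorem's constant $2\beta$ from the constant $\beta$ in Pollack's Theorem~\ref{f beta > 1}. Morally: since $F(n)=B_{\omega(n)}$ ignores multiplicity, the extremal $n$ must be squarefree and so carry only $\sim\log x/\log_2 x$ primes (``half the budget''), whereas for $f$ one may inflate the factorization count using repeated primes at no cost in the size of $n$.
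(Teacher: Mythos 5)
Your reduction to $\max_m B_m^\beta N_m(x)$, the primorial lower bound, and the Hardy--Ramanujan treatment of the range $m\le(1-\delta_0)m^\ast$ are all sound (the paper handles that range by a H\"older-type trick with the first moment $\sum_{n\le x}F(n)\le xL(x)^{o(1)}$ instead, but your computation works and correctly identifies $\delta_0\asymp\frac{2}{\beta-1}\frac{\log_3 x}{\log_2 x}$). The genuine gap is your treatment of the near-maximal window. Parametrize $m=\bigl(1-t\frac{\log_3 x}{\log_2 x}\bigr)\frac{\log x}{\log_2 x}$ with $0\le t< \frac{2}{\beta-1}$. Since $B_m^\beta=x^\beta L(x)^{-\beta t-2\beta+o(1)}$, what you must prove in the window is $N_m(x)\le L(x)^{\beta t+o(1)}$. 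Your key counting claim --- that the ``partition-type count'' of admissible prime configurations is $\exp(o(\log L(x)))$ --- is false once $t$ is bounded away from $0$ (which happens inside the window for every fixed $\beta>1$): here $\theta(q_m)=\log x-(t+o(1))\log L(x)$, so the multiplicative budget is $(t+o(1))\log L(x)$, and already replacing the single prime $q_m$ by any prime up to $L(x)^{t-o(1)}$ produces at least $L(x)^{t-o(1)}$ admissible squarefree integers; the true count of squarefree (indeed of all) $n\le x$ with $\omega(n)=m$ is $L(x)^{t+o(1)}$, not $L(x)^{o(1)}$. Your parenthetical remark is correct only at the very top of the window, where $t=o(1)$.

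Moreover, even if you replace the false claim by the correct bound $\pi'(x,m)\le L(x)^{t+o(1)}$, your decomposition $n=\mathrm{rad}(n)\cdot s$ with $s\le L(x)^{t+o(1)}$ multiplies the number of choices of $s$ by the number of choices of the radical and only yields $N_m(x)\le L(x)^{2t+o(1)}$; compared with the requirement $N_m(x)\le L(x)^{\beta t+o(1)}$ this closes the argument only for $\beta\ge 2$, so the proof fails for $1<\beta<2$ precisely in the range you flagged as delicate. What is needed is the sharp estimate $N_m(x)\le L(x)^{t+o(1)}$ in the whole window, which is exactly the content of Pomerance's bound $\pi(x,k)=xk^{-k}(1-c)^k e^{O(k)}$ (Theorem~\ref{Pomerance bound}); the paper's proof imports this (together with the first-moment bound coming from Theorem~\ref{F beta = 1} for the complementary set), whereas your sketch would have to either invoke it as well or supply a genuine proof of an equivalent counting statement --- the heuristic budget/partition count as written does not give it.
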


\begin{thm} \label{F beta < 1} For $\beta \in (0, 1)$,
\[\sum_{n \leq x} F(n)^\beta = x\exp\left((1 + o(1)) \left(\frac{(1 - \beta) \log_2 x}{(\log_3 x)^\beta}\right)^{1/(1 - \beta)}\right).\]
\end{thm}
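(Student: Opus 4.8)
My plan is to reduce the problem to the distribution of $\omega(n)$, the number of distinct prime divisors of $n$. If $n=p_1^{a_1}\cdots p_r^{a_r}$, then in any factorization of $n$ into pairwise coprime parts larger than one each prime power $p_i^{a_i}$ must lie entirely in one part, so such factorizations correspond bijectively to set partitions of $\{p_1^{a_1},\dots,p_r^{a_r}\}$. Hence $F(n)=B_{\omega(n)}$, the $\omega(n)$th Bell number, and in particular $F(n)=f(n)$ for every squarefree $n$. Writing $\pi_m(x)=\#\{n\le x:\omega(n)=m\}$, this gives
\[\sum_{n\le x}F(n)^\beta=\sum_{m\ge 0}B_m^\beta\,\pi_m(x).\]
I will also use the Bell asymptotic $\log B_m=m\log m-m\log_2 m-m+o(m)$ and the Hardy--Ramanujan bound $\pi_m(x)\ll \frac{x}{\log x}\frac{(\log_2 x+O(1))^{m-1}}{(m-1)!}$, valid for all $m$ and all $x$.

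For the upper bound there is essentially nothing to do: a coprime factorization is a factorization, so $F\le f$ pointwise, and Theorem~\ref{f beta < 1} applies verbatim. One can also argue directly by inserting the Hardy--Ramanujan bound and the Bell asymptotic into the displayed sum and maximizing the exponent over $m$; the $\le\log x$ values of $m$ away from the optimum contribute only a factor $x^{o(1)}$.

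The substance is the lower bound $\sum_{n\le x}F(n)^\beta\ge x\exp\big((1-o(1))E\big)$ with $E=\big(\frac{(1-\beta)\log_2 x}{(\log_3 x)^\beta}\big)^{1/(1-\beta)}$. Since $F=f$ on squarefrees, it suffices to show that squarefree integers alone realize Pollack's lower bound for $\sum f(n)^\beta$, so I would run the optimization above but keep a single term. Maximizing $m\big[(\beta-1)\log m-\beta\log_2 m+\log_3 x+(1-\beta)\big]$ in $m$, and using $\log_2 m^\ast=\log_4 x-\log(1-\beta)+o(1)$ at the critical point, yields
\[m^\ast=(1+o(1))\,(1-\beta)^{\beta/(1-\beta)}(\log_2 x)^{1/(1-\beta)}(\log_3 x)^{-\beta/(1-\beta)},\]
with maximal value $(1-\beta)m^\ast+o(m^\ast)=(1+o(1))E$. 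Since $m^\ast=(\log x)^{o(1)}$ we certainly have $\prod_{i\le m^\ast}p_i\le x$, so such squarefree integers are plentiful. Granting a lower bound of the expected order,
\[\#\{n\le x:\ \mu^2(n)=1,\ \omega(n)=m^\ast\}\ \ge\ \frac{x}{\log x}\,\frac{(\log_2 x)^{m^\ast-1}}{(m^\ast-1)!}\,e^{-o(m^\ast)},\]
multiplication by $B_{m^\ast}^\beta$ and the Bell asymptotic (together with $\log(m^\ast-1)!=m^\ast\log m^\ast-m^\ast+O(\log m^\ast)$) make the leading $m\log m$ terms cancel, leaving $\log x-\log_2 x+(1-\beta)m^\ast+o(m^\ast)=\log x+(1-o(1))E$, because $\log_2 x=o(E)$.

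The main obstacle is precisely this lower bound on the count of squarefree integers up to $x$ with exactly $m^\ast$ prime factors: $m^\ast$ is a fixed power of $\log_2 x$ strictly larger than $\log_2 x$ (and, as $\beta\to1$, an arbitrarily large power), hence well outside the Erd\H{o}s--Kac/Sath\'e--Selberg window $m\asymp\log_2 x$, so the classical asymptotic $\pi_m(x)\sim\frac{x}{\log x}\frac{(\log_2 x)^{m-1}}{(m-1)!}$ is not available off the shelf. I would either (i) quote, or re-derive by the Selberg--Delange method with a saddle point in the Dirichlet-series variable (coefficient extraction from $\zeta(s)^z$), an estimate for $\pi_m(x)$ uniform for $m$ up to a small power of $\log x$, which suffices; or (ii) appeal to Pollack's proof of Theorem~\ref{f beta < 1} directly, in which the lower bound is obtained by restricting $\sum f(n)^\beta$ to squarefree integers with $m^\ast$ prime factors, where $f(n)=F(n)=B_{m^\ast}$, so his estimate is literally a lower bound for $\sum F(n)^\beta$. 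Route (ii) turns the theorem into a two-line deduction; route (i) is what is needed for a self-contained proof, and it is the only genuinely delicate point.
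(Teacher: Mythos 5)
Your proposal is correct, and your route (ii) is essentially the paper's own proof: the upper bound is exactly $F\le f$ plus Theorem \ref{f beta < 1}, and for the lower bound the paper simply cites the fact that Pollack's proof of Theorem \ref{f beta < 1} actually establishes
\[\max_k B_k^\beta \pi(x,k) = x\exp\left((1+o(1))\left(\frac{(1-\beta)\log_2 x}{(\log_3 x)^\beta}\right)^{1/(1-\beta)}\right),\]
which immediately lower bounds $\sum_{n\le x}F(n)^\beta=\sum_k B_k^\beta\pi(x,k)$. The one place where you make the problem look harder than it is is the insistence on squarefree integers. Since $F(n)=B_{\omega(n)}$ for \emph{every} $n$ (the prime powers, not the primes, get partitioned), the lower bound needs only a lower bound for $\pi(x,k^*)$, the count of all integers with $\omega(n)=k^*$, not for the squarefree count; so the ``main obstacle'' you single out --- estimating the number of squarefree $n\le x$ with $(\log_2 x)^{1/(1-\beta)+o(1)}$ prime factors, outside the Sathe--Selberg range --- never arises. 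The genuine analytic input (lower bounds on $\pi(x,k)$ for $k$ a power of $\log_2 x$ exceeding $\log_2 x$) is precisely what Pollack's evaluation of $\max_k B_k^\beta\pi(x,k)$ supplies, and the paper defers to it rather than reproving it; your route (i) via Selberg--Delange/saddle-point estimates for squarefree counts would also work, but it is more machinery than the statement requires.
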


\begin{thm} \label{G beta > 1} For $\beta > 1$,
\[\sum_{n \leq x} G(n)^\beta = \frac{x^\beta}{L(x)^{\beta + o(1)}}.\]
\end{thm}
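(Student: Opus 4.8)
The plan is to reduce to the companion Theorem~\ref{F beta > 1}, using that both $F$ and $G$ depend on $n$ only through $\omega(n)$. An ordered (resp.\ unordered) factorization of $n=p_1^{a_1}\cdots p_r^{a_r}$ into pairwise coprime parts larger than $1$ is the same datum as an ordered (resp.\ unordered) partition of the set $\{p_1^{a_1},\dots,p_r^{a_r}\}$ into nonempty blocks, so
\[G(n)=a(\omega(n)),\qquad F(n)=B(\omega(n)),\]
where $a(k)$ is the $k$-th ordered Bell (Fubini) number and $B(k)$ the $k$-th Bell number. Let $K=\max_{n\le x}\omega(n)$; this maximum is attained at the primorial $P_K=p_1\cdots p_K$, and $\omega(n)\le K$ for every $n\le x$.

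Next I would assemble the asymptotics. From $P_K\le x<P_{K+1}$ and the prime number theorem, $\vartheta(p_K)=\log x+O(\log_2 x)$, hence $p_K=(1+o(1))\log x$, $K=\pi(p_K)=(1+o(1))\log x/\log_2 x$, and, with a little care, $K\log K=\log x-(1+o(1))\frac{\log x\log_3 x}{\log_2 x}$. Since $a(k)\sim k!/\big(2(\log 2)^{k+1}\big)$, this gives $\log a(K)=K\log K+O(K)=\log x-(1+o(1))\frac{\log x\log_3 x}{\log_2 x}$, i.e.
\[a(K)=\frac{x}{L(x)^{1+o(1)}}.\]
Feeding in the Moser--Wyman asymptotic for $B(k)$ as well, one gets $\log\!\big(a(k)/B(k)\big)=(1+o(1))\,k\log_2 k$ as $k\to\infty$; because $t\mapsto t\log_2 t$ is increasing and $K\log_2 K=(1+o(1))\frac{\log x\log_3 x}{\log_2 x}=(1+o(1))\log L(x)$, it follows that
\[a(k)\le B(k)\,L(x)^{1+o(1)}\qquad\text{for all }1\le k\le K,\]
the finitely many bounded $k$ causing no trouble.

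The lower bound now comes from a single term: $\sum_{n\le x}G(n)^\beta\ge G(P_K)^\beta=a(K)^\beta=x^\beta/L(x)^{\beta+o(1)}$. For the upper bound, since $\omega(n)\le K$ for all $n\le x$, the last display gives the pointwise estimate $G(n)^\beta=\big(a(\omega(n))/B(\omega(n))\big)^\beta F(n)^\beta\le L(x)^{\beta+o(1)}F(n)^\beta$; summing and invoking Theorem~\ref{F beta > 1},
\[\sum_{n\le x}G(n)^\beta\le L(x)^{\beta+o(1)}\sum_{n\le x}F(n)^\beta=L(x)^{\beta+o(1)}\cdot\frac{x^\beta}{L(x)^{2\beta+o(1)}}=\frac{x^\beta}{L(x)^{\beta+o(1)}}.\]
Combined with the lower bound, this is the assertion of the theorem.

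The substantive input is therefore twofold:
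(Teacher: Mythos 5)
Your argument is correct and is essentially the paper's own proof: the lower bound comes from the single term $a(K)^\beta$ at the maximal value $K$ of $\omega(n)$, and the upper bound from the uniform ratio estimate $a_k/B_k=\exp((1+o(1))k\log_2 k)\le L(x)^{1+o(1)}$ for $k\le K$ combined with Theorem \ref{F beta > 1}, exactly as in Section 4.1 (the paper phrases the comparison via $\max_k B_k^\beta\pi(x,k)$ rather than pointwise in $n$, but this is the same computation). Note only that your write-up breaks off mid-sentence at ``The substantive input is therefore twofold:''; the mathematical content before that point is complete.
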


\begin{thm} \label{G beta < 1} For $\beta \in (0, 1)$,
\[\sum_{n \leq x} G(n)^\beta = x\exp\left((1 + o(1)) \frac{1 - \beta}{(\log 2)^{\beta/(1 - \beta)}} (\log_2 x)^{1/(1 - \beta)}\right).\]
\end{thm}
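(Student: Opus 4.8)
The plan is to reduce everything to the distribution of $\omega(n)$. An ordered factorization of $n$ into pairwise coprime parts larger than one is exactly an ordered set partition of the set of prime‑power components $\{p^{v_p(n)} : p\mid n\}$, so $G(n) = a(\omega(n))$, where $a(k)$ is the $k$‑th ordered Bell (Fubini) number; since $a(k)\sim k!/\bigl(2(\log 2)^{k+1}\bigr)$, Stirling gives $\log a(k) = k\log k - k - k\log\log 2 + O(\log k)$. Writing $\pi_k(x) = \#\{n\le x : \omega(n) = k\}$, we have $\sum_{n\le x}G(n)^\beta = \sum_{k\ge 1}\pi_k(x)\,a(k)^\beta$, so the task is to balance the growth $\log a(k)^\beta = \beta k\log k + O(k)$ against the decay of $\pi_k(x)$ and then evaluate the dominant term.

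The input I would establish is that, uniformly for $1 \le k \le (\log_2 x)^{A}$ with any fixed $A$ (only $A$ slightly above $1/(1-\beta)$ is needed in the end),
\[\log\frac{\pi_k(x)}{x} = (k-1)\log_3 x - \log\bigl((k-1)!\bigr) - \log_2 x + o(k).\]
The upper half is the Hardy--Ramanujan bound $\pi_k(x)\le C_1 x(\log_2 x + C_2)^{k-1}\big/\bigl((k-1)!\,\log x\bigr)$, whose logarithm has the stated shape with $O(k/\log_2 x)$ in place of $o(k)$. For the lower half I would count integers $n = p_1\cdots p_{k-1}q$ with $p_1<\dots<p_{k-1}$ primes in $(w,z]$, where $z = x^{1/(3(k-1))}$ and $w = \exp\bigl((\log x)^{1/\log_3 x}\bigr)$, and $q$ a prime in $\bigl(x^{1/3},\,x/(p_1\cdots p_{k-1})\bigr]$; since $p_1\cdots p_{k-1}\le z^{k-1}=x^{1/3}$, there are $\gg x/(p_1\cdots p_{k-1}\log x)$ admissible $q$, so
\[\pi_k(x) \gg \frac{x}{\log x}\sum_{w<p_1<\dots<p_{k-1}\le z}\frac{1}{p_1\cdots p_{k-1}} = \frac{x}{\log x}\,e_{k-1}\!\bigl(\{1/p : w<p\le z\}\bigr),\]
and because every prime here exceeds $w$, the ``repeated‑index'' part of $\bigl(\sum 1/p\bigr)^{k-1} = (k-1)!\,e_{k-1} + (\text{repeats})$ is $O\!\bigl(k^2/(w\log w)\bigr)$ relative to the main term, hence negligible; thus $e_{k-1}\gg (k-1)!^{-1}\bigl(\sum_{w<p\le z}1/p\bigr)^{k-1}$ with $\sum_{w<p\le z}1/p = (1-o(1))\log_2 x$ in our range, since there $\log k = O(\log_3 x)$ forces $\log_2 z = (1-o(1))\log_2 x$ while $\log_2 w = o(\log_2 x)$.

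Granting this, write the exponent of $x^{-1}\pi_k(x)a(k)^\beta$, via Stirling, as $\psi(k) + o(k)$ with principal part $\psi(k) = k\log_3 x - (1-\beta)k\log k + (1-\beta)k - \beta k\log\log 2 - \log_2 x + O(\log k)$. This is concave in $k$ with maximum where $\log_3 x - (1-\beta)\log k - \beta\log\log 2 = 0$, that is at
\[k^\ast = \exp\!\left(\frac{\log_3 x - \beta\log\log 2}{1-\beta}\right) = \frac{(\log_2 x)^{1/(1-\beta)}}{(\log 2)^{\beta/(1-\beta)}},\]
and there $\psi(k^\ast) = (1-\beta)k^\ast + o(k^\ast) = (1+o(1))\dfrac{1-\beta}{(\log 2)^{\beta/(1-\beta)}}(\log_2 x)^{1/(1-\beta)}$, because $\log_2 x$ and $\log k^\ast = O(\log_3 x)$ are both $o(k^\ast)$. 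The lower bound in the theorem is then immediate from the single term $k=\lceil k^\ast\rceil$. For the upper bound, the Hardy--Ramanujan estimate applies for every $k$ and $\pi_k(x)=0$ once $k>(1+o(1))\log x/\log_2 x$, so $\sum_k\pi_k(x)a(k)^\beta$ has $O(\log x)$ nonzero terms, each $\le x\exp(\psi(k)+o(k))$; since $\psi$ is concave with $\psi'(k)\le -(1-\beta)\log 2 + o(1)$ for $k\ge 2k^\ast$, the tail past $k^\ast$ is geometric, the whole sum is $x\exp\bigl((1+o(1))\psi(k^\ast)\bigr)$, and the factor $O(\log x)$ is harmless since $\log_2 x = o(k^\ast)$.

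The genuinely delicate point, which I expect to be the main obstacle, is the uniform two‑sided control of $\pi_k(x)$ for $(2-\varepsilon)\log_2 x < k \le (\log_2 x)^{A}$ — past the Sathé--Selberg zone $k\le (2-\varepsilon)\log_2 x$ — and, crucially, with error $o(k)$ rather than merely $O(k)$, since an $O(k)$ slack would destroy the value of the constant. This is what forces the small primes in the construction to be truncated at $w = \exp\bigl((\log x)^{1/\log_3 x}\bigr)$ (so that collisions are negligible while $\sum_{w<p\le z}1/p$ is still $(1-o(1))\log_2 x$); one could alternatively invoke a version of the Selberg--Delange method uniform in a parameter $z\approx k/\log_2 x$ tending slowly to infinity. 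It is precisely because the optimal $k^\ast$ is only a fixed power of $\log_2 x$, so that $\log k^\ast = O(\log_3 x) = o(\log_2 x)$, that every error is $o(k^\ast)$ and the leading constant comes out the clean $(1-\beta)/(\log 2)^{\beta/(1-\beta)}$, rather than the $\log_3 x$‑dependent constants that appear for $f$ and $F$.
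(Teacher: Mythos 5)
Your proposal is correct, and the skeleton is the same as ours: reduce to $\sum_k a_k^\beta\,\pi(x,k)$ via $G(n)=a_{\omega(n)}$, bound $\pi(x,k)$ from above by Hardy--Ramanujan, observe that the exponent $k\log_3 x-(1-\beta)k\log k+O(k)$ is concave and maximized at $k^\ast=(\log 2)^{-\beta/(1-\beta)}(\log_2 x)^{1/(1-\beta)}$ with value $(1+o(1))(1-\beta)k^\ast$, and note that the $O(\log_2 x)$ and $O(\log x)$ losses are absorbed because $\log_2 x=o(k^\ast)$. Where you diverge is the matching lower bound on $\pi(x,k^\ast)$: we quote Pomerance's estimate $\pi(x,k)=\frac{x}{k!\log x}\exp(k\log L_0+o(k))$, whereas you construct the integers $n=p_1\cdots p_{k-1}q$ directly, with the $p_i$ confined to $(w,z]$ for $w=\exp((\log x)^{1/\log_3 x})$ so that the elementary symmetric function is within $e^{o(k)}$ of $(\sum_{w<p\le z}1/p)^{k-1}/(k-1)!$ and $\sum_{w<p\le z}1/p=(1-o(1))\log_2 x$. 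Your version is self-contained and, more importantly, uniform over the whole polylogarithmic range of $k$; the estimate we cite is stated for $(\log_2 x)^2\le k\le \log x/(3\log_2 x)$, which contains $k^\ast$ only when $1/(1-\beta)\ge 2$, i.e.\ $\beta\ge 1/2$ (and the Sathe--Selberg range $k\ll\log_2 x$ never contains $k^\ast$), so your elementary construction is actually the cleaner way to cover all $\beta\in(0,1)$ at once. Your closing observation --- that the constant comes out clean precisely because $\log k^\ast=O(\log_3 x)=o(k^\ast)$ kills every secondary term --- is exactly the right diagnosis of why this case is easier than the corresponding result for $F$.
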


In the process of obtaining the negative moments of $F$ and $G$, we also obtain the negative moments of $f$ and $g$.

\begin{thm} \label{F beta < 0} For $\beta > 0$, the sums
\[\sum_{n \leq x} f(n)^{-\beta}, \quad \sum_{n \leq x} F(n)^{-\beta}\]
are both
\[\frac{x}{\log x} \exp((1 + o(1))((1 + \beta)(\log_2 x)(\log_3 x)^\beta)^{1/(1 + \beta)}).\]
\end{thm}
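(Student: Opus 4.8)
The plan is to reduce both sums to the single quantity $\sum_{n \le x} B_{\omega(n)}^{-\beta}$, where $B_k$ denotes the $k$-th Bell number, and then to estimate that sum by grouping $n$ according to $\omega(n)$. The reduction rests on two elementary facts: if $n = \prod_{i=1}^{k} p_i^{a_i}$ with $k = \omega(n)$, then a factorization of $n$ into pairwise coprime parts larger than one is exactly a partition of the $k$-element set $\{p_1^{a_1}, \dots, p_k^{a_k}\}$ into nonempty blocks, so $F(n) = B_{\omega(n)}$; and since any factorization of a squarefree number into parts larger than one automatically has pairwise coprime parts, while coprimality is a restriction in general, we have $F(n) \le f(n)$ with equality whenever $n$ is squarefree. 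Hence, for $\beta > 0$,
\[\sum_{\substack{n \le x \\ \mu^2(n)=1}} B_{\omega(n)}^{-\beta} \;\le\; \sum_{n \le x} f(n)^{-\beta} \;\le\; \sum_{n \le x} F(n)^{-\beta} \;=\; \sum_{n \le x} B_{\omega(n)}^{-\beta},\]
and it suffices to bound the leftmost sum from below and the rightmost from above, in each case by $\frac{x}{\log x}\exp((1+o(1))E)$, where $E := \bigl((1+\beta)(\log_2 x)(\log_3 x)^{\beta}\bigr)^{1/(1+\beta)}$. Writing $\pi_k(x)$ for the number of $n \le x$ with $\omega(n) = k$, and $\pi_k^{\#}(x)$ for its restriction to squarefree $n$, both sums take the shape $\sum_k \pi_k^{(\#)}(x)\,B_k^{-\beta}$.

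Two classical ingredients feed into this. The first is the asymptotics of the Bell numbers: $\log B_k = kW(k) - k + k/W(k) + O(\log k) = k\log k - k\log_2 k - k + o(k)$ (Moser--Wyman, or a saddle-point evaluation of Dobi\'{n}ski's series; here $\log_2 k = \log\log k$ and $W$ is the Lambert function). The second is the Hardy--Ramanujan inequality $\pi_k(x) \ll \frac{x}{\log x}\,\frac{(\log_2 x + O(1))^{k-1}}{(k-1)!}$, valid uniformly in $k$, together with the matching lower bound $\pi_k^{\#}(x) \gg \frac{x}{\log x}\,\frac{(\log_2 x)^{k-1}}{(k-1)!}$ for $k = o(\log_2 x)$; the latter can be quoted from the Sathe--Selberg theorem, or proved directly by counting squarefree $n = q\,p_1\cdots p_{k-1}$ with $p_1 < \dots < p_{k-1}$ primes up to $\exp(\log x / \log_2 x)$ and $q$ a prime near $x/(p_1\cdots p_{k-1})$, using that $p_1 \cdots p_{k-1} = x^{o(1)}$ throughout that range. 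The discrepancies these introduce --- replacing $\log_2 x$ by $\log_2 x + O(1)$, shifting $k$ by $O(1)$ inside factorials and exponents, and the $o(k)$ inside $\log B_k$ --- all turn out to be $o(E)$ for the relevant $k$, so the problem comes down to
\[\phi(k) := k\log_3 x - \log k! - \beta \log B_k = k\log_3 x - (1+\beta)k\log k + \beta k \log\log k + (1+\beta)k + o(k),\]
for which one needs $\max_k \phi(k) = (1+o(1))E$ (upper bound) and $\phi(k_0) = (1+o(1))E$ at the maximizer $k_0$ (lower bound).

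The heart of the matter is this optimization. The main part of $\phi$ is concave for $k \ge 2$, and its maximum is located where $(1+\beta)\log k = \log_3 x + \beta\log\log k + \beta/\log k$; iterating this equation gives $\log k_0 = \frac{1}{1+\beta}\bigl(\log_3 x + \beta\log_4 x - \beta\log(1+\beta)\bigr) + o(1)$, that is,
\[k_0 \;\sim\; (1+\beta)^{-\beta/(1+\beta)}\,(\log_2 x)^{1/(1+\beta)}(\log_3 x)^{\beta/(1+\beta)}\]
(using $e^{\log_3 x} = \log_2 x$ and $e^{\log_4 x} = \log_3 x$); substituting back, $\phi(k_0) = (1+\beta)k_0(1+o(1)) = \bigl((1+\beta)(\log_2 x)(\log_3 x)^{\beta}\bigr)^{1/(1+\beta)}(1+o(1)) = (1+o(1))E$. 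From here the lower bound is immediate on keeping only the single term $k = k_0$: $\sum_k \pi_k^{\#}(x)B_k^{-\beta} \ge \pi_{k_0}^{\#}(x)B_{k_0}^{-\beta} \ge \frac{x}{\log x}\exp((1+o(1))E)$. For the upper bound, since $k_0 = \Theta(E)$ there are only $O(E)$ indices $k \le 2k_0$, each contributing at most $e^{\max_k \phi(k)} = \exp((1+o(1))E)$, and $O(E)\cdot\exp((1+o(1))E) = \exp((1+o(1))E)$ since $\log E = o(E)$; for $k > 2k_0$ the ratio of consecutive terms is $\asymp (\log_2 x)(\log k)^{\beta}/k^{1+\beta} \le \tfrac12$, so that tail is also $\le \exp((1+o(1))E)$, whence $\sum_k \pi_k(x)B_k^{-\beta} \le \frac{x}{\log x}\exp((1+o(1))E)$. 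Combined with the sandwich above, this yields the claimed common asymptotic for $\sum_{n\le x}f(n)^{-\beta}$ and $\sum_{n\le x}F(n)^{-\beta}$.

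The step I expect to be the real obstacle is pinning down the leading constant $(1+\beta)^{1/(1+\beta)}$ in $E$. The optimization must be carried one order further than it first appears to require: the innocuous-looking term $-\beta\log(1+\beta)$ in $\log k_0$ scales $k_0$ by the constant factor $(1+\beta)^{-\beta/(1+\beta)}$, and it is precisely this factor that converts the crude estimate $(1+\beta)k_0 \asymp (1+\beta)(\log_2 x)^{1/(1+\beta)}(\log_3 x)^{\beta/(1+\beta)}$ into $\bigl((1+\beta)(\log_2 x)(\log_3 x)^{\beta}\bigr)^{1/(1+\beta)}$; omitting it would produce the wrong constant $1+\beta$. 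Everything else is careful bookkeeping to confirm that each discarded error is $o(E)$ --- which is safe because $E \gg (\log_2 x)^{1/(1+\beta)}$ dominates $\log_3 x$, $\log k_0$, and $k_0(\log_3 x)/\log_2 x$.
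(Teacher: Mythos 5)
Your proposal is correct and follows essentially the same route as the paper: reduce both sums to $\sum_k B_k^{-\beta}\pi(x,k)$, use the Hardy--Ramanujan inequality for the upper bound and the Sathe--Selberg lower bound for squarefree $n$ (which also handles $f$ via $F(n)\le f(n)$ with equality on squarefree $n$), and optimize over $k$ at the same maximizer $k^*\sim\bigl((1+\beta)^{-\beta}(\log_2 x)(\log_3 x)^{\beta}\bigr)^{1/(1+\beta)}$ with $\phi(k^*)=(1+o(1))(1+\beta)k^*$. The only cosmetic difference is that you locate the maximum by differentiating $\phi$ while the paper uses the ratio $b_{k+1}(x)/b_k(x)\sim(\log_2 x)(\log k)^{\beta}/k^{1+\beta}$, and both treat the tail $k$ beyond a constant multiple of $k^*$ by the same decreasing-ratio argument.
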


\begin{thm} \label{g beta < 0} For $\beta > 0$, the sums
\[\sum_{n \leq x} g(n)^{-\beta}, \quad \sum_{n \leq x} G(n)^{-\beta}\]
are both
\[\frac{x}{\log x} \exp((1 + o(1))(1 + \beta)(\log 2)^{\beta/(1 + \beta)} (\log_2 x)^{1/(1 + \beta)}).\]
\end{thm}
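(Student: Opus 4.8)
The plan is to translate both sums into a single combinatorial series and estimate that series by Laplace's method, the only number-theoretic input being classical estimates for the count of integers with a prescribed number of prime factors. First, the combinatorial identity: in a coprime factorization of $n=p_1^{a_1}\cdots p_r^{a_r}$ the parts are pairwise coprime divisors, so each prime power $p_i^{a_i}$ lies entirely in one part, and an ordered coprime factorization of $n$ is exactly an ordered set partition of $\{p_1^{a_1},\dots,p_r^{a_r}\}$. Hence $G(n)=\mathrm{Fub}(\omega(n))$, where $\mathrm{Fub}(r)=\sum_{k\ge0}k!\,S(r,k)$ is the $r$-th ordered Bell (Fubini) number, $S(r,k)$ a Stirling number of the second kind, and $\omega$ counts distinct prime divisors. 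For $g$, the inflation map sending an ordered factorization $\mathrm{rad}(n)=\bigl(\prod_{i\in B_1}p_i\bigr)\cdots\bigl(\prod_{i\in B_k}p_i\bigr)$ to $n=\bigl(\prod_{i\in B_1}p_i^{a_i}\bigr)\cdots\bigl(\prod_{i\in B_k}p_i^{a_i}\bigr)$ is injective, so $g(n)\ge g(\mathrm{rad}(n))=\mathrm{Fub}(\omega(n))$, with equality when $n$ is squarefree. Writing $\pi_r(x)$ and $\pi_r^{\flat}(x)$ for the number of integers, respectively squarefree integers, $n\le x$ with $\omega(n)=r$, we obtain
\[
\sum_{n\le x}G(n)^{-\beta}=\sum_{r\ge1}\frac{\pi_r(x)}{\mathrm{Fub}(r)^{\beta}},\qquad
\sum_{r\ge1}\frac{\pi_r^{\flat}(x)}{\mathrm{Fub}(r)^{\beta}}\le\sum_{n\le x}g(n)^{-\beta}\le\sum_{r\ge1}\frac{\pi_r(x)}{\mathrm{Fub}(r)^{\beta}}.
\]

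Next I would invoke the Hardy--Ramanujan bound $\pi_r(x)\ll\frac{x}{\log x}\frac{(\log_2 x+C)^{r-1}}{(r-1)!}$, uniform in $r\ge1$ for an absolute constant $C$, together with the classical Landau--Sathe--Selberg estimate $\pi_r(x),\,\pi_r^{\flat}(x)\gg\frac{x}{\log x}\frac{(\log_2 x)^{r-1}}{(r-1)!}$, uniform for $r=o(\log_2 x)$. Combined with the previous paragraph, this reduces the theorem to the claim that, for $z=\log_2 x+O(1)$,
\[
\Sigma(z):=\sum_{r\ge1}\frac{1}{\mathrm{Fub}(r)^{\beta}}\cdot\frac{z^{r-1}}{(r-1)!}=\exp\!\Bigl((1+o(1))\,(1+\beta)(\log 2)^{\beta/(1+\beta)}\,z^{1/(1+\beta)}\Bigr).
\]
Granting this, the upper bound for both sums follows from $\Sigma(z)\le M(z)\cdot\max_{r\ge1}\bigl(\mathrm{Fub}(r)^{-\beta}z^{r-1}/(r-1)!\bigr)$ where $M(z)$ is the number of terms within a bounded factor of the maximum, and the lower bound from retaining a single near-optimal term of $\Sigma$ (which is legitimate since $\pi_r^{\flat}$ is controlled below there); the factor $x/\log x$ is carried along, and the absolute constants do not affect the exponent.

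To evaluate $\Sigma(z)$ I would use the pole of the generating function $\sum_{r\ge0}\mathrm{Fub}(r)t^r/r!=(2-e^t)^{-1}$ at $t=\log 2$, giving $\mathrm{Fub}(r)=(1+o(1))\,r!\,\bigl(2(\log 2)^{r+1}\bigr)^{-1}$; feeding this and Stirling's formula into $\Sigma(z)$, the logarithm of its $r$-th term is
\[
r\bigl(\log z+\beta\log\log 2-(1+\beta)\log r+(1+\beta)\bigr)+O(\log(rz)).
\]
This is maximized at $r^{\ast}=\bigl(z(\log 2)^{\beta}\bigr)^{1/(1+\beta)}$, where it equals $(1+\beta)r^{\ast}+O(\log(r^{\ast}z))=(1+\beta)(\log 2)^{\beta/(1+\beta)}z^{1/(1+\beta)}+O(\log_3 x)$. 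Since only $M(z)=O(\sqrt{r^{\ast}})$ terms lie within a bounded factor of the maximum, $\log\Sigma(z)$ agrees with this maximal value up to $O(\log r^{\ast})=O(\log_3 x)$, which gives the displayed formula for $\Sigma(z)$ at $z=\log_2 x$, and hence the theorem. (The same scheme, with the Bell number $B(\omega(n))$ replacing $\mathrm{Fub}(\omega(n))$, yields Theorem~\ref{F beta < 0}.)

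The main obstacle is uniformity of the error terms. Because $1/(1+\beta)<1$ for $\beta>0$, the target exponent $E(x)=(1+\beta)(\log 2)^{\beta/(1+\beta)}(\log_2 x)^{1/(1+\beta)}$ grows strictly more slowly than $\log_2 x$, so every quantity hidden inside the ``$1+o(1)$'' — the $O(1)$ shift of $\log_2 x$ in Hardy--Ramanujan, the $O(\log r)$ from Stirling, the $(1+o(1))$ in the Fubini asymptotic, and the count $M(z)$ of near-maximal terms — must be shown to be $o\bigl((\log_2 x)^{1/(1+\beta)}\bigr)$ and not merely $o(\log_2 x)$. Organizing the saddle-point estimate so that all of these are $O(\log_3 x)$, and checking that the admissible range $r=o(\log_2 x)$ of the Landau--Sathe--Selberg estimate comfortably contains $r^{\ast}\asymp(\log_2 x)^{1/(1+\beta)}$, is where the real care is needed.
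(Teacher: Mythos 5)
Your proposal is correct and follows essentially the same route as the paper: the identity $G(n)=a_{\omega(n)}$ (ordered Bell/Fubini numbers), the Hardy--Ramanujan bound for the upper estimate, Sklar's asymptotic $a_k\sim k!/(2(\log 2)^{k+1})$ to locate the maximal term at $k^*\sim(\log 2)^{\beta/(1+\beta)}(\log_2 x)^{1/(1+\beta)}$ with value $\exp((1+\beta+o(1))k^*)$, and the Sathe--Selberg squarefree count (where $g=G$) for the matching lower bound and for transferring the result to $g$. The only difference is presentational: you maximize the logarithm of the general term directly, whereas the paper examines the ratio $A_{k+1}(x)/A_k(x)$, and you spell out the lower-bound/squarefree reduction that the paper leaves as ``similar arguments.''
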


With the exception of the first moment, the positive moments of $g$ are still unknown.

\section{Bounds on $\pi(x,k)$}

Let $\omega(n)$ be the number of distinct prime factors of $n$. In addition, let $\pi(x, k)$ be the number of $n \leq x$ satisfying $\omega(n) = k$ and let $\pi'(x, k)$ have the additional restriction that each $n$ must be squarefree. Throughout this paper, we use several bounds for $\pi(x, k)$, each of which corresponds to a distinct set of possible values of $k$. We collect those bounds here.

The first result is the Hardy-Ramanujan Inequality.

\begin{thm}[{\cite{H}}] There exist constants $c_1$ and $c_2$ such that for all $x \geq 2$, $k \geq 1$,
\[\pi(x, k) \leq c_1 \frac{x}{\log x} \cdot \frac{(\log_2 x + c_2)^{k - 1}}{(k - 1)!}.\]
\end{thm}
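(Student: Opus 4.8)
This is the classical Hardy--Ramanujan inequality; the natural plan is induction on $k$. For the base case $k=1$ the integers counted by $\pi(x,1)$ are exactly the prime powers not exceeding $x$, so $\pi(x,1)=\pi(x)+\pi(x^{1/2})+\pi(x^{1/3})+\cdots=\pi(x)+O(x^{1/2})\ll x/\log x$ by Chebyshev's bound, which is the asserted estimate since $(\log_2 x+c_2)^{0}/0!=1$ (choose $c_1$ large enough to also cover bounded $x$).

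For the inductive step, write each $n$ with $\omega(n)=k\ge 2$ uniquely as $n=p^{a}m$, where $p$ is the largest prime factor of $n$, $p^{a}$ is the exact power of $p$ dividing $n$; then $\omega(m)=k-1$ and every prime factor of $m$ is $<p$. Grouping by $m$ and bounding the number of admissible pairs $(p,a)$ by the number of prime powers $\le x/m$, this gives
\[
\pi(x,k)\ \le\ \sum_{\substack{m\le x/2\\ \omega(m)=k-1}}\Bigl(\pi(x/m)+O\bigl((x/m)^{1/2}\bigr)\Bigr).
\]
The main tool for the right-hand side is the clean weighted estimate
\[
\sum_{\substack{m\le y\\ \omega(m)=j}}\frac1m\ \le\ \frac{(\log_2 y+c_3)^{j}}{j!},
\]
valid for all $j\ge 0$ and $y\ge 2$, which follows immediately by expanding $\bigl(\sum_{p^{a}\le y}p^{-a}\bigr)^{j}$ and using Mertens' bound $\sum_{p\le y}1/p=\log_2 y+O(1)$. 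Inserting $\pi(x/m)\ll(x/m)/\log(x/m)$ and this estimate with $j=k-1$, the contribution of $m\le x^{1-\delta}$, where $\log(x/m)\gg\delta\log x$, is $\ll\frac{x}{\delta\log x}\cdot\frac{(\log_2 x+c_3)^{k-1}}{(k-1)!}$, of the desired shape; the error term $\sum_m O((x/m)^{1/2})$ and, more generally, the terms with $a\ge 2$ are controlled similarly.

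The real work — and the reason this is a named inequality rather than an exercise — lies in making the constants close up \emph{uniformly in $k$}, which forces a careful choice of $\delta$ and a separate treatment of the range $x^{1-\delta}<m\le x/2$. There the condition ``every prime factor of $m$ is $<p\le x/m$'' forces $m$ to be $x^{\delta}$-smooth while exceeding $x^{1-\delta}$, so that $m$ has $\gg 1/\delta$ prime factors counted with multiplicity but only $k-1$ distinct ones; such $m$ are rare, and weighting by $1/m$ produces a bound involving $\bigl(\log_2 x+\log\delta+O(1)\bigr)^{k-1}/(k-1)!$. To absorb this loss one takes $c_2$ strictly larger than $c_3$, so that the decay of $\bigl((\log_2 x+c_3)/(\log_2 x+c_2)\bigr)^{k-1}$ compensates for the factor $1/\delta$, keeping in mind that $\pi(x,k)$ vanishes once $k$ exceeds $(1+o(1))\log x/\log_2 x$. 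Carrying out this bookkeeping is exactly the argument of Hardy and Ramanujan; since the result is attributed to \cite{H}, one may instead simply quote it.
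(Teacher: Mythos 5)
The paper itself offers no proof of this statement: it is the classical Hardy--Ramanujan inequality, imported verbatim from \cite{H}, so your closing option of simply quoting the reference is exactly what the authors do, and for the purposes of this paper that is all that is required.

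Read as an actual proof, however, your sketch has a genuine gap, and it sits precisely where you place ``the real work''. In the range $x^{1-\delta}<m\le x/2$ the bound $\pi(x/m)+O((x/m)^{1/2})\ll x/m$ carries no logarithmic saving, so your $1/m$-weighted estimate gives a contribution of size about $x\,(\log_2 x+\log\delta+O(1))^{k-1}/(k-1)!$, which exceeds the target $c_1(x/\log x)(\log_2 x+c_2)^{k-1}/(k-1)!$ by a factor of order $\log x$. The compensation you propose --- taking $c_2>c_3$ and using the decay of $\bigl((\log_2 x+\log\delta+O(1))/(\log_2 x+c_2)\bigr)^{k-1}$ --- is of size $\exp\bigl(-c(k-1)\log(1/\delta)/\log_2 x\bigr)$, so beating the lost factor $\log x$ forces $\log(1/\delta)\gg(\log_2 x)^2/(k-1)$; but then the factor $1/\delta\ge\exp\bigl((\log_2 x)^2/(k-1)\bigr)$ incurred in the complementary range $m\le x^{1-\delta}$ is no longer a constant unless $k\gg(\log_2 x)^2$. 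Thus no choice of $\delta$ closes your accounting for bounded $k$ or for the typical range $k\asymp\log_2 x$, where the theorem is certainly not vacuous. The rarity of $x^{\delta}$-smooth $m$ exceeding $x^{1-\delta}$ (equivalently $\Omega(m)\gg1/\delta$), which is what would actually have to carry that range, is invoked but never used quantitatively: the displayed $1/m$-bound exploits only the smallness of the prime factors of $m$, not the size of $m$. Note also that you announce an induction on $k$ but never invoke an inductive hypothesis. The classical proof keeps the induction and arranges the splitting the other way around: in the squarefree case, since $n\le x$ has at most one prime factor exceeding $\sqrt{x}$, one gets $(k-1)\,\pi'(x,k)\le\sum_{p\le\sqrt{x}}\pi'(x/p,k-1)$; the induction hypothesis applied to $x/p\ge\sqrt{x}$ supplies the crucial factor $1/\log(x/p)\asymp1/\log x$, and the estimate $\sum_{p\le\sqrt{x}}1/(p\log(x/p))\le(\log_2 x+O(1))/\log x$ together with the multiplicity $k-1$ yields the extra factor $(\log_2 x+c_2)/(k-1)$ once $c_2$ is chosen large, closing the induction with the same $c_1$ (prime powers and non-squarefree $n$ are handled by minor variants). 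Either carry out that argument in full, or do as the paper does and simply cite \cite{H}.
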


Sathe and Selberg proved the following result.

\begin{thm}[{\cite{S, Se}}] \label{Sathe} Let $\pi'(x, k)$ be the number of squarefree $n \leq x$ satisfying $\omega(n) = k$. If $k \ll \log_2 x$, then
\[C_1 \frac{x}{\log x} \cdot \frac{(\log_2 x)^{k - 1}}{(k - 1)!} \leq \pi'(x, k) \leq C_2 \frac{x}{\log x} \cdot \frac{(\log_2 x)^{k - 1}}{(k - 1)!}\]
for fixed positive constants $C_1$ and $C_2$.
\end{thm}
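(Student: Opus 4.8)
The plan is to prove the two inequalities separately, by rather different means. The upper bound is immediate from the Hardy--Ramanujan inequality stated above: every squarefree $n\le x$ with $\omega(n)=k$ is among the $n\le x$ counted by $\pi(x,k)$, so
\[\pi'(x,k)\le\pi(x,k)\le c_1\,\frac{x}{\log x}\cdot\frac{(\log_2 x+c_2)^{k-1}}{(k-1)!}.\]
Writing $(\log_2 x+c_2)^{k-1}=(\log_2 x)^{k-1}(1+c_2/\log_2 x)^{k-1}$ and using $(1+c_2/\log_2 x)^{k-1}\le\exp\!\bigl(c_2(k-1)/\log_2 x\bigr)$ shows the extra factor is $O(1)$ throughout the range $k\le A\log_2 x$, where $A$ is the implied constant in $k\ll\log_2 x$; so $C_2=c_1\exp(c_2 A)$ works.

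For the lower bound I would run the Selberg--Delange method on the generating Dirichlet series
\[\sum_{n\ge 1}\mu^2(n)\,z^{\omega(n)}n^{-s}=\prod_p\bigl(1+zp^{-s}\bigr)=\zeta(s)^{z}\,H(s,z),\qquad H(s,z):=\prod_p\bigl(1+zp^{-s}\bigr)\bigl(1-p^{-s}\bigr)^{z}.\]
Since $(1+zw)(1-w)^{z}=1+O(w^{2})$ with the implied constant uniform for $z$ in a fixed disc, the product defining $H$ converges absolutely on $\Re s\ge\tfrac{1}{2}+\delta$, with $H(s,z)$ holomorphic and bounded there uniformly for $z$ in a fixed disc, and $H(1,z)>0$ for $z>0$. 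The Selberg--Delange theorem then gives, uniformly for $z$ in a fixed disc,
\[\sum_{n\le x}\mu^2(n)\,z^{\omega(n)}=\frac{H(1,z)}{\Gamma(z)}\,x(\log x)^{z-1}\Bigl(1+O\bigl(1/\log x\bigr)\Bigr).\]

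The quantity $\pi'(x,k)$ is exactly the coefficient of $z^{k}$ on the left, so Cauchy's formula on a circle $|z|=r$ recovers it. I would take $r$ to be the relevant saddle point, which satisfies $r\log_2 x\asymp k$ and hence lies in a fixed interval $(0,A']$ when $k\ll\log_2 x$; there $H(1,z)\asymp 1$ and $1/\Gamma(z)\asymp z$. Substituting the asymptotic above and estimating
\[\frac{1}{2\pi i}\oint_{|z|=r}\frac{H(1,z)}{\Gamma(z)}(\log x)^{z}\,\frac{dz}{z^{k+1}}\]
by the Laplace (saddle-point) method --- the logarithm of the integrand has a nondegenerate maximum along the circle at $z=r$, with second derivative there of order $k$, while the integrand's modulus is exponentially smaller than its peak elsewhere on the circle --- produces a main term that, after Stirling's formula applied to $(k-1)!$ and the relation $r\log_2 x\asymp k$, is a bounded multiple of $\frac{x}{\log x}\cdot\frac{(\log_2 x)^{k-1}}{(k-1)!}$; the same computation bounds $\pi'(x,k)$ below by a positive multiple of this quantity. (For bounded $k$ one may finish elementarily instead, from the recursion $k\,\pi'(x,k)=\sum_p\#\{m\le x/p:\mu^2(m)=1,\ \omega(m)=k-1,\ p\nmid m\}$ together with Mertens' theorem.)

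The crux is uniformity. Making the Selberg--Delange asymptotic uniform for $z$ over a complex neighbourhood of the real interval in play is the standard but delicate step: one shifts the contour in $\frac{1}{2\pi i}\int\zeta(s)^{z}H(s,z)\,x^{s}s^{-1}\,ds$ around the branch point of $\zeta(s)^{z}$ at $s=1$, using a Hankel-type contour, bounds for $\zeta$ in a classical zero-free region, and careful bookkeeping of the $z$-dependence, and one must then check that the arc of the Cauchy integral away from the saddle is negligible and that all errors stay controlled uniformly for $k$ up to a constant times $\log_2 x$. For a clean write-up it is simplest to cite the theorem of Sathe and Selberg (or the textbook form of the method) directly; the only remark specific to the present statement is that the secondary factor of the full Sathe--Selberg asymptotic, a continuous positive function of $(k-1)/\log_2 x$ equal to $1$ at the origin, stays between fixed positive constants precisely when $(k-1)/\log_2 x$ is bounded, i.e.\ when $k\ll\log_2 x$.
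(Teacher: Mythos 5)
The paper offers no proof of this statement at all: it is quoted verbatim from Sathe and Selberg, so there is no internal argument to compare against, and your proposal should be judged against the cited literature proof, which it essentially reproduces. Your upper bound is complete and correct: $\pi'(x,k)\le\pi(x,k)$ plus the Hardy--Ramanujan inequality and $(1+c_2/\log_2 x)^{k-1}\le e^{c_2(k-1)/\log_2 x}\le e^{c_2A}$ for $k\le A\log_2 x$ give $C_2=c_1e^{c_2A}$ (so $C_2$ depends on the implied constant $A$, which is how the theorem is meant to be read and used in Section 5, where only $k\asymp(\log_2 x)^{1/(1+\beta)}$ is needed). Your lower-bound sketch is exactly the analytic route of the cited references: Selberg--Delange applied to $\prod_p(1+zp^{-s})=\zeta(s)^zH(s,z)$, then Cauchy's formula on $|z|=r$ with $r\asymp k/\log_2 x$ bounded. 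Two small points would need care in a full write-up. First, the uniform Selberg--Delange statement carries an \emph{additive} error, $\sum_{n\le x}\mu^2(n)z^{\omega(n)}=x(\log x)^{z-1}\bigl(H(1,z)/\Gamma(z)+O(1/\log x)\bigr)$, not the relative factor $(1+O(1/\log x))$ you wrote, which is false uniformly near $z=0$ where $1/\Gamma(z)$ vanishes; in your range this is harmless, since the error term's contribution to the contour integral is smaller than the main term by a factor $\ll(\log_2 x)/(\sqrt{k}\,\log x)$, but the bookkeeping should use the additive form. Second, for bounded $k$ the saddle radius tends to $0$ and the saddle-point analysis degenerates; you correctly flag this and patch it elementarily. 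Since you ultimately propose citing the Sathe--Selberg asymptotic (whose secondary factor is a continuous positive function of $k/\log_2 x$, hence bounded between positive constants precisely in the range $k\ll\log_2 x$), your treatment matches what the paper itself does, with the added value of a self-contained proof of the upper half.
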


Though the Hardy-Ramanujan Inequality is the most widely applicable formula for $\pi(x, k)$, it is also the least precise. Each of the following results applies to a smaller set of possible values of $k$, but gives a more precise estimate. From here on, let
\[L_0 = \log_2 x - \log k - \log_2 k.\]
The following bounds come from Pomerance, Hildebrand-Tenenbaum, and Kerner.

\begin{thm}[{\cite[Thms. $3.1$, $4.1$]{Pom}}] \label{Pomerance bound 2} For
\[(\log_2 x)^2 \leq k \leq \frac{\log x}{3 \log_2 x},\]
we have
\[\pi(x, k) = \frac{x}{k! \log x} \exp(k \log L_0 + o(k)).\]
\end{thm}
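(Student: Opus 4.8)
The plan is to establish matching upper and lower bounds. It is cleanest to pass first to the squarefree count $\pi'(x,k)$: one has $\pi(x,k)\ge\pi'(x,k)$, and writing $n=m\ell$ with $m=\mathrm{rad}(n)$ and $\ell\mid m^\infty$ shows, after a short argument (since $m$ already carries $k$ distinct prime factors and $m\ell\le x$, the admissible completions $\ell$ are so sparse that they contribute only a factor $\exp(o(k))$), that $\pi(x,k)=\pi'(x,k)\exp(o(k))$ throughout the stated range. So it suffices to estimate $\pi'(x,k)$.

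For the upper bound I would use Rankin's method, but applied with care. The naive bound $\pi'(x,k)\le x^\sigma D_k(\sigma)$ with $D_k(s)=\sum_{\mu^2(n)=1,\ \omega(n)=k}n^{-s}=e_k\big((p^{-s})_p\big)$, followed by $D_k(\sigma)\le\big(\sum_pp^{-\sigma}\big)^k/k!$, is far too lossy here: at the optimal $\sigma=1+\delta$ one has $\sum_pp^{-\sigma}\asymp\log_2 x-\log k\ll k$, so the ``repeated-prime'' terms that this last bound discards in fact dominate. One must instead estimate $e_k$ honestly. The elementary inequality $e_k\big((a_p)\big)\le z^{-k}\prod_p(1+za_p)$, valid for every $z>0$, gives $\pi'(x,k)\le x^\sigma z^{-k}\prod_p(1+zp^{-\sigma})$, and the task becomes the analysis of $\log\prod_p(1+zp^{-\sigma})$: split the primes at the crossover $p\asymp z^{1/\sigma}$, where each factor passes from $\approx zp^{-\sigma}$ (contributing, via the prime number theorem for $\theta$ and $\pi$, a main term) to $\approx1+zp^{-\sigma}$ (contributing an exponential-integral tail $\asymp z\,\mathrm{Ei}(-\delta\log z)$). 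Optimizing jointly in $\sigma$ and $z$ yields a saddle point whose location, to the precision needed, is governed by $\log_2 x$, $\log k$, and $\log_2 k$ — this is the origin of $L_0$ — and substituting it back, with Stirling's formula for $k!$ and with the lower-order terms absorbed (crucially $\log_2 x=o(k)$, since $k\ge(\log_2 x)^2$), collapses the bound to $\frac{x}{k!\log x}\exp(k\log L_0+o(k))$. An essentially equivalent route, and the one implicit in the cited work, is to write $\pi(x,k)=\frac1{2\pi i}\oint_{|z|=r}z^{-k-1}\sum_{n\le x}z^{\omega(n)}\,dz$ and insert the uniform Hildebrand-Tenenbaum asymptotics for $\sum_{n\le x}z^{\omega(n)}$ before fixing $r$ at the saddle; a recursion on the largest prime factor of $n$ in the style of Erd\H os is yet another.

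For the lower bound I would exhibit many admissible $n$. Fix a threshold $T$ and a dyadic decomposition of the primes up to $T$, choose $k_j$ distinct primes from the $j$-th block with $\sum_jk_j=k$, subject to the product staying $\le x$; the number of such $n$ is $\prod_j\binom{N_j}{k_j}$, with $N_j$ the number of primes in block $j$, which by Stirling is the exponential of a sum of entropy-type terms. Maximizing over the profile $(k_j)$ by Lagrange multipliers against the log-product constraint pins down the optimal profile and forces $T$ to make the constraint marginally binding; a direct evaluation of this maximum recovers $\frac{x}{k!\log x}\exp(k\log L_0+o(k))$, matching the upper bound. The cruder constructions — all $k$ primes below $x^{1/k}$, or one large prime together with $k-1$ primes below $x^{1/(2k)}$ — give only $\frac{x}{\log x}\exp(-k\log_2 x+O(k))$ rather than the required $\frac{x}{\log x}\exp(-k\log(k/L_0)+O(k))$, and are tight only at the two ends of the range, so the optimized construction is genuinely needed.

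The chief obstacle is uniformity across the whole range $(\log_2 x)^2\le k\le\log x/(3\log_2 x)$. Near the lower end $\pi'(x,k)$ is still essentially of Landau-Sathe-Selberg type, whereas near the upper end the constraint ``$k$ primes with product $\le x$'' is genuinely binding — indeed the product of the first $k$ primes is already $x^{1/3+o(1)}$ there — so the auxiliary quantity $\delta\log z$ passes from $o(1)$ to $\asymp 1$ and the asymptotic expansions, both of the prime sums and of $\sum_{n\le x}z^{\omega(n)}$, must be controlled uniformly. One must then verify, again uniformly, that the value at the saddle simplifies exactly to $k\log L_0+o(k)$. This uniform saddle-point bookkeeping — precisely the content of the Hildebrand-Tenenbaum and Pomerance estimates — is where essentially all of the work lies.
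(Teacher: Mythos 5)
First, a point of comparison: the paper does not prove this statement at all — it is quoted verbatim from Pomerance (Theorems 3.1 and 4.1 of \cite{Pom}), so the ``paper's proof'' is a citation, and your task was in effect to reprove a nontrivial external result. Your strategy is of the right general type (a Rankin/saddle-point upper bound for the restricted Dirichlet series together with an optimized combinatorial construction for the lower bound is essentially how Pomerance and Hildebrand--Tenenbaum obtain such estimates), and your diagnosis that the naive bound $e_k \le (\sum_p p^{-\sigma})^k/k!$ is lossy at the top of the range while adequate near $k = (\log_2 x)^2$ is accurate.

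However, as a proof the proposal has genuine gaps, and they sit exactly at the theorem's core. (1) The central claim of the upper bound — that optimizing $x^{\sigma} z^{-k}\prod_p(1+zp^{-\sigma})$ jointly in $\sigma$ and $z$, uniformly over $(\log_2 x)^2 \le k \le \log x/(3\log_2 x)$, collapses to $\frac{x}{k!\log x}\exp(k\log L_0+o(k))$ — is asserted, not carried out; no saddle-point equations are solved, no uniform error analysis is given, and you concede that ``this is where essentially all of the work lies.'' The same is true of the lower bound: the entropy maximization over the profile $(k_j)$ is not evaluated, so the matching of the two bounds (in particular that the exponent is $-k\log(k/L_0)+o(k)$ and not something differing by $\Theta(k\log_4 x)$, say) is unverified. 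Since the whole content of the theorem is this uniform bookkeeping, what you have is a plausible roadmap rather than a proof. (2) The reduction $\pi(x,k)=\pi'(x,k)\exp(o(k))$ via writing $n=m\ell$ with $\ell \mid m^{\infty}$ is also only asserted (``a short argument''); the needed step is a uniform bound on $\sum_{q}\pi'(x/q,\,k-\omega(q))$ over squarefull completions $q$, which is believable but must be checked up to $k=\log x/(3\log_2 x)$. (3) Your fallback route — inserting the uniform Hildebrand--Tenenbaum asymptotics for $\sum_{n\le x} z^{\omega(n)}$ into Cauchy's formula — is logically fine but simply trades one deep citation for another of comparable strength (indeed the paper's Theorem 2.4 is itself a corollary of that work), so it does not make the argument self-contained either. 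In short: right framework, but the quantitative heart of the theorem is missing.
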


\begin{thm}[{\cite[Cor. $2$]{HiT}}] \label{Hildebrand Tenenbaum} For $(\log_2 x)^2 \ll k \ll (\log x)/(\log_2 x)^2$, we have
\[\pi(x, k) = \frac{x}{k! \log x} \exp\left(k\left(\log M + \frac{1}{M} + O(R)\right)\right),\]
with
\[M = \log \xi + \log_2 \xi - \log L_0 - \gamma,\]
\[R = \frac{1}{L_0} \left(\frac{1}{y} + \frac{1}{L_0}\right),\]
\[y = \frac{k}{L_0},\]
\[\xi = \frac{\log x}{y \log y}.\]
\end{thm}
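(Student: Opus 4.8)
This statement is quoted verbatim from Corollary~2 of Hildebrand and Tenenbaum \cite{HiT}, so no new proof is needed here; I will only sketch the method one would use to establish it. The starting point is the contour representation
\[\pi(x,k) = \frac{1}{2\pi i}\oint_{|z| = r} F(x,z)\,\frac{dz}{z^{k+1}}, \qquad F(x,z) := \sum_{n \le x} z^{\omega(n)},\]
valid for every $r > 0$, which reduces matters to an accurate estimate for $F(x,z)$ on a circle $|z| = r$ together with a good choice of $r$.

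The estimate for $F(x,z)$ comes from a Selberg--Delange argument. The Dirichlet series $\sum_n z^{\omega(n)} n^{-s}$ factors as $\zeta(s)^z H(s,z)$, where $H(s,z) = \prod_p \bigl(1 + zp^{-s}(1-p^{-s})^{-1}\bigr)(1-p^{-s})^z$ is analytic and of controlled size for $\Re s > \tfrac{1}{2}$; a Perron-type contour integration then yields $F(x,z) = \dfrac{x(\log x)^{z-1}}{\Gamma(z)}\bigl(H(1,z) + o(1)\bigr)$. The growth of $H(1,z)$ as $z \to \infty$ is governed by Mertens' third theorem $\prod_{p \le t}(1-1/p)^{-1} \sim e^{\gamma}\log t$, and this is what ultimately produces the term $-\gamma$ inside $M$.

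Next is the saddle-point step. On the positive real axis the quantity $F(x,r)\,r^{-k}$ is minimized where $r\,\partial_r\log F(x,r) = k$, i.e.\ where the mean of $\omega$ under the $r$-tilted measure on $\{n \le x\}$ equals $k$; since that mean is $\approx r\log_2 x - r\psi(r) + r\,(\log H(1,r))'$, with $\psi(r) \approx \log r$ and $(\log H(1,r))' \approx -\log_2 r - \gamma$ for large $r$, the saddle $r = r_0$ solves $\log_2 x - \log r - \log_2 r - \gamma \approx k/r$, whose leading solution is $r_0 \approx k/L_0$ with $L_0 = \log_2 x - \log k - \log_2 k$. Refining this equation, and carrying out the second-order expansion of $\log F(x,z) - k\log z$ about $z = r_0$ followed by a Gaussian integration, is what introduces the further quantities $y = k/L_0$ and $\xi = (\log x)/(y\log y)$; a Stirling manipulation extracting a factor $k!$ from the pieces $\Gamma(r_0)$, $r_0^{-k}$ and the prime product then rewrites the outcome as $\dfrac{x}{k!\log x}\exp\bigl(k(\log M + 1/M + O(R))\bigr)$, with $M = \log\xi + \log_2\xi - \log L_0 - \gamma$ and the error $R = L_0^{-1}(y^{-1} + L_0^{-1})$ absorbing the losses from the refinement for $r_0$ and from the Selberg--Delange approximation.

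The main obstacle is uniformity in $k$. Over the permitted range $k \ll (\log x)/(\log_2 x)^2$ the saddle $r_0 \approx k/L_0$ can be almost as large as $(\log x)/(\log_2 x)^2$, so the estimate for $F(x,z)$ must hold uniformly for $z$ growing with $x$, not merely for bounded $z$. Securing this forces one to control $\zeta(s)^z$ and $H(s,z)$ in a neighborhood of $s = 1$ that shrinks as $z$ grows, and to choose the Perron contour as a function of $z$; this uniform analysis is the technical heart of \cite{HiT}, and it is the step I would expect to occupy the bulk of the work.
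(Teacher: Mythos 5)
The paper does not prove this statement either: it is quoted directly from Corollary 2 of Hildebrand and Tenenbaum \cite{HiT}, so treating it as an external citation, as you do, is exactly the paper's approach, and nothing more is required. Your sketch of the underlying method is a fair outline of that reference (with the minor caveat that Hildebrand and Tenenbaum run the saddle-point argument simultaneously in the Dirichlet variable $s$ and the counting variable $z$, rather than first establishing a Selberg--Delange type estimate for $\sum_{n \le x} z^{\omega(n)}$ and then saddle-pointing in $z$ alone).
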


\begin{thm}[{\cite[Thm. $6.1$]{Pom}}] \label{Pomerance bound} For $k = \lfloor c \log x/\log_2 x \rfloor$ with $c \in [1/3, 1 - (1/\log_2 x)]$, we have
\[\pi(x, k) = xk^{-k} (1 - c)^k e^{O(k)}.\]
If $c > 1 - (1/\log_2 x)$, then
\[\pi(x, k) = e^{O(k)}.\]
\end{thm}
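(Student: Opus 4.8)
The plan is to prove matching upper and lower bounds on $\pi(x,k)$. I note first that the two assertions dovetail: at $c=1-1/\log_2 x$ a short computation with the prime number theorem and Stirling's formula shows $xk^{-k}(1-c)^{k}=e^{(1+o(1))k}$, so the second assertion is just the $c\to 1$ degeneration of the first, and for $k$ slightly beyond the stated range it follows from the first together with the unimodality of $k\mapsto\pi(x,k)$ (which is decreasing once $k\gg\log_2 x$). So I concentrate on $c\in[1/3,1-1/\log_2 x]$.

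It helps to rewrite the target first. Let $P_k=p_1p_2\cdots p_k$ be the product of the first $k$ primes. Using the prime number theorem (so $\log P_k=\theta(p_k)=p_k+O(k)$ and $p_k=k\log k+k\log_2 k+O(k)$) and Stirling one gets $xk^{-k}(1-c)^{k}=\dfrac{x}{P_k}\bigl((1-c)\log_2 x\bigr)^{k}e^{O(k)}$, which for fixed $c$ is $x^{1-c}(\log_2 x)^{k}e^{O(k)}$. So the assertion says that $\pi(x,k)$ equals $x/P_k$, inflated by a factor of roughly $\log_2 x$ for each of the $k$ prime factors, up to $e^{O(k)}$. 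For the lower bound I would count integers $n=s\,r$ with $s$ a product of $k-j$ of the smallest primes and $r\le x/s$ a product of $j$ distinct primes each larger than every prime of $s$; then $\#\{r\}$ is a restricted value of $\pi'(\cdot,j)$, and I would choose the split $j$ and a size threshold so that this count lies in the range of an available estimate — Theorem~\ref{Sathe} for small $j$, Theorems~\ref{Pomerance bound 2} and~\ref{Hildebrand Tenenbaum} for $j$ up to a constant multiple of $k$ when $c$ is near $1/3$, and a hands-on count of products of primes from a carefully chosen window (Mertens' theorems and partial summation) otherwise — pushing $j$ as large as that estimate allows. Summing over the admissible $r$ and simplifying with Stirling should then reach the displayed order.

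For the upper bound I would apply Rankin's method, but only after splitting each $n$ with $\omega(n)=k$ according to the sizes of its prime factors: $n=n_Sn_L$ with $n_S$ the $y$-smooth part and $n_L$ the $y$-rough part, $y$ a suitable power of $\log x$, say $\omega(n_S)=i$ and $\omega(n_L)=k-i$. Since $n_L$ has $k-i$ distinct prime factors all exceeding $y$, it is at least the product of the $k-i$ smallest primes above $y$; combining this with a Rankin bound on the number of admissible $n_L\le x/n_S$ (valid with an exponent slightly larger than $1$ because the prime factors of $n_L$ lie in the bounded window $(y,x/n_S]$, so the relevant prime sum converges) and a Rankin bound on the sum of $1/n_S$ over admissible $n_S$ with exponent $1+O(1/\log_2 x)$, then summing over $i$ by the binomial theorem and simplifying with Stirling, gives the bound. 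The threshold $y$, the two Rankin exponents, and (if needed) a further subdivision of the range of prime sizes have to be optimised simultaneously.

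The hard step is this upper bound, and the crux is producing the constant $(1-c)^{k}$ instead of a weaker $(\log_3 x)^{k}$. A single application of Rankin's method to $\sum_{\omega(n)=k}n^{-\sigma}$ with $\sigma>1$ only gives $\pi(x,k)=xk^{-k}(\log_3 x)^{k}e^{o(k)}$, because for $\sigma>1$ one cannot exploit the fact that the restriction $n\le x$ forces the $k$ prime factors to cluster near $\log x$ with essentially no slack. Getting the sharp constant forces the decomposition by prime sizes and the correct joint choice of parameters, and one then has to check that every lower-order loss — the $x^{o(1)}$ factors, the smooth-number and lifting counts, the prime-number-theorem error term — stays within $e^{O(k)}$. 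This is genuinely tight: $(\log_2 x)^{k}=e^{\Theta(k\log_3 x)}$ is \emph{not} absorbed by $e^{O(k)}$, which is exactly why the main term must be kept in the unsimplified form $xk^{-k}(1-c)^{k}$.
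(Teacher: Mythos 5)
First, a point of comparison: the paper offers no proof of Theorem~\ref{Pomerance bound} at all --- it is quoted from \cite[Thm.~6.1]{Pom} and used as a black box --- so your text must stand on its own as a proof of Pomerance's theorem. Read that way, it is a plausible plan rather than a proof: your preliminary reformulations (the identity $xk^{-k}(1-c)^k = (x/P_k)((1-c)\log_2 x)^k e^{O(k)}$ and the observation that the second assertion is the $c\to 1$ degeneration of the first) check out, but every step that carries the actual difficulty is deferred. The clearest outright gap is your treatment of the range $c>1-(1/\log_2 x)$: you invoke ``the unimodality of $k\mapsto \pi(x,k)$ (which is decreasing once $k\gg \log_2 x$),'' but monotonicity of $\pi(x,k)$ in $k$ is not an available tool in this range --- unimodality of $\#\{n\le x:\omega(n)=k\}$ is itself a hard problem, not a citable lemma --- so the second assertion needs a direct argument (e.g.\ that such $n$ is divisible by its squarefree kernel, which has $k$ prime factors and hence exceeds $p_1\cdots p_k = x^{1-O(1/\log_2 x)}$, leaving only $e^{O(k)}$ room for the cofactor and the choice of primes).

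The two main bounds are likewise not established. For the lower bound, the quantity you need is the number of $r\le x/s$ with $j$ distinct prime factors \emph{all exceeding} $p_{k-j}$; this is not $\pi'(x/s,j)$, and none of Theorems~\ref{Sathe}, \ref{Pomerance bound 2}, \ref{Hildebrand Tenenbaum} applies to it as stated. You say you would choose $j$ ``so that this count lies in the range of an available estimate,'' but you never exhibit a choice of $j$ for which an estimate applies \emph{and} the resulting product is as large as $xk^{-k}(1-c)^k e^{O(k)}$; when $c$ is close to $1$, where $(1-c)^k$ is not $e^{O(k)}$, this is precisely the delicate case. For the upper bound you correctly diagnose that a single Rankin/Hardy--Ramanujan application loses $(\log_2 x)^k = e^{k\log_3 x}$, which is fatal, but the proposed fix is described only as a smooth/rough splitting in which ``the threshold $y$, the two Rankin exponents, and a further subdivision have to be optimised simultaneously.'' That optimisation, together with the verification that every secondary loss (smooth-number counts, the prime sum over the window, Stirling and prime-number-theorem errors) stays within $e^{O(k)}$, \emph{is} the theorem; until it is carried out, the sharp factor $(1-c)^k$ has not been obtained. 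So the proposal should be regarded as an outline of a possible attack on the cited result, not a proof of it.
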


\begin{thm}[{\cite[Section $1.2.3$, Cor. $1$]{Ke}}] Fix $c \in (\epsilon, 1 - \epsilon)$. Let $H(t)$ be the inverse function of $te^t \Gamma(0, t)$. Define
\[S(t) = \frac{t}{\pi} \sin\left(\frac{\pi}{t}\right).\]
Let
\[\alpha = 1 + (H(c) + o(1))\frac{1}{\log_2 x}.\]
For $k = c \log x/\log_2 x$, we have
\[\pi(x, k) = x^\alpha \left(\frac{e}{S(\alpha) \log x}\right)^{k\alpha} \exp\left(O\left(\frac{k}{(\log_2 x)^{1/4}}\right)\right).\]
\end{thm}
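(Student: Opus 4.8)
The plan is to pin down $\pi(x,k)$ by sandwiching it between matching upper and lower bounds produced by a Rankin‑type exponential‑tilt argument with two free real parameters: $\alpha>1$ close to $1$ (it will end up at the stated value) and $z>0$, optimized at a two‑dimensional saddle point. I would first pass to the squarefree count $\pi'(x,k)$, i.e.\ the number of $k$‑subsets of primes with product $\le x$: the upper‑bound argument below applies verbatim to $\pi(x,k)$ with $\prod_p(1+z/(p^{\alpha}-1))$ in place of $\prod_p(1+zp^{-\alpha})$, these products differ by a factor $\exp(O(\log_3 x))$, and $\pi(x,k)\ge\pi'(x,k)$, so it suffices to estimate $\pi'(x,k)$ with error $\exp(O(k/(\log_2 x)^{1/4}))$.

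For the upper bound, Rankin's inequality gives for all $\alpha>1$, $z>0$,
\[\pi'(x,k)\le x^{\alpha}z^{-k}\prod_p\left(1+\frac{z}{p^{\alpha}}\right),\]
and the crux is the asymptotic evaluation of $\sum_p\log(1+zp^{-\alpha})$ in the critical window $\alpha=1+\delta$ with $\delta\asymp 1/\log_2 x$ and $z\asymp\log x/\log_2 x$. Splitting the primes at $P=z^{1/\alpha}$, the range $p\le P$ contributes only $O(z/\log z)$ by the prime number theorem, while the tail is $z\sum_{p>P}p^{-\alpha}=(1+o(1))\,z\,\Gamma(0,\delta\log z/\alpha)$ — the substitution $v=\delta\log t$ turns this tail into the exponential integral, which is exactly how $t\mapsto te^{t}\Gamma(0,t)$, and hence its inverse $H$, enters. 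Optimizing $\alpha\log x-k\log z+z\,\Gamma(0,\delta\log z/\alpha)$ first in $z$ and then in $\alpha$ yields two saddle equations; combining them with $k=c\log x/\log_2 x$ forces the saddle value $t:=\delta\log z/\alpha$ to satisfy $te^{t}\Gamma(0,t)=c+o(1)$, i.e.\ $t=H(c)+o(1)$ and $\delta=(H(c)+o(1))/\log_2 x$, which is precisely $\alpha=1+(H(c)+o(1))/\log_2 x$. Substituting back, and using $S(\alpha)=\sin(\pi/\alpha)/(\pi/\alpha)=1/\bigl(\Gamma(1+1/\alpha)\Gamma(1-1/\alpha)\bigr)=(\alpha-1)(1+O(\alpha-1))$ together with the defining identity $c=H(c)e^{H(c)}\Gamma(0,H(c))$, a short computation identifies $\alpha\log x-k\log z+z\,\Gamma(0,\delta\log z/\alpha)$ with $\log\bigl(x^{\alpha}(e/(S(\alpha)\log x))^{k\alpha}\bigr)$ up to $O(k/(\log_2 x)^{1/4})$; this is the asserted upper bound.

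For the matching lower bound I would run the same tilt probabilistically. Normalizing $\prod_p(1+zp^{-\alpha})$ makes the events ``$p\mid n$'' independent with probability $zp^{-\alpha}/(1+zp^{-\alpha})$; under this model $\omega(n)=\sum_p\mathbf 1[p\mid n]$ has mean $\sum_p zp^{-\alpha}/(1+zp^{-\alpha})$ and $\log n=\sum_p\mathbf 1[p\mid n]\log p$ has mean $\sum_p zp^{-\alpha}\log p/(1+zp^{-\alpha})$, and the two saddle equations say exactly that these means equal $k$ and $\log x$. A local central limit theorem for sums of independent Bernoullis gives $\Pr[\omega(n)=k]\gg k^{-1/2}$, while Chebyshev (using $\operatorname{Var}(\log n)\ll z(\log_2 x)^{2}\ll(\log x)(\log_2 x)$) shows $\log n$ stays within a window of length $O((\log x)^{3/4}(\log_2 x)^{O(1)})$ of its mean with probability $1-o(k^{-1/2})$; re‑centering the tilt so that this window lies in $[\log x-w,\log x]$ produces a family of squarefree $n\le x$ with $\omega(n)=k$ of total weight $\gg\prod_p(1+zp^{-\alpha})/\sqrt k$, each of weight at most $z^{k}x^{-\alpha}e^{\alpha w}$, so that $\pi(x,k)\ge\pi'(x,k)\gg x^{\alpha}z^{-k}\prod_p(1+zp^{-\alpha})\,e^{-O(w)}$. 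Since $w=o(k/(\log_2 x)^{1/4})$, this matches the upper bound.

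The main obstacle — and the step I expect to occupy most of the work — is the precise evaluation of $\sum_p\log(1+zp^{-\alpha})$ and the ensuing two‑stage optimization in the narrow regime $\alpha-1\asymp 1/\log_2 x$: there the sum is genuinely spread over primes as large as $\exp(\Theta(\log_2 x))$, so one may not simply replace $\log z+\log p$ by $\log z$, and it is exactly the careful tracking of these competing logarithms — together with propagating all error terms through the successive optimizations in $z$ and $\alpha$ — that produces the exponential integral, the function $H$, the sinc factor $S(\alpha)$, and ultimately the error $\exp(O(k/(\log_2 x)^{1/4}))$, which also measures the precision to which $\alpha$ itself is pinned down. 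Secondary care is needed for a sufficiently uniform local limit theorem in the lower bound and for verifying that the non‑squarefree surplus is negligible at this scale.
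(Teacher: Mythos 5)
This statement is Theorem~2.6 of the paper, quoted verbatim from Kerner's thesis \cite{Ke}; the paper supplies no proof of it, so there is no internal argument to measure yours against. Taken on its own terms, your skeleton is the right one and the constants do check out: the Rankin bound $\pi'(x,k)\le x^{\alpha}z^{-k}\prod_p(1+zp^{-\alpha})$ is valid; the cancellation $\pi(P)\log z-\alpha\theta(P)=O(z/\log z)$ for $P=z^{1/\alpha}$ is correct, so the prime sum is governed by the tail $z\,\Gamma(0,\delta\log z/\alpha)$ with $\delta=\alpha-1$; the two saddle equations $z\,\Gamma(0,t)\approx k$ and $ze^{-t}\approx\delta\alpha\log x$ (with $t=\delta\log z/\alpha$) combine with $k=c\log x/\log_2 x$ and $\log z\sim\log_2 x$ to force $te^{t}\Gamma(0,t)=c+o(1)$, hence $\alpha=1+(H(c)+o(1))/\log_2 x$; and since $S(\alpha)=(\alpha-1)(1+O((\alpha-1)^2))$, substituting $c=H(c)e^{H(c)}\Gamma(0,H(c))$ back into $\alpha\log x-k\log z+k$ does reproduce $\log\bigl(x^{\alpha}(e/(S(\alpha)\log x))^{k\alpha}\bigr)$. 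The lower-bound device (local CLT for $\omega$ under the tilted product measure, a Chebyshev window of length $(\log x)^{3/4}(\log_2 x)^{O(1)}$ for $\log n$, and a union bound) is also legitimate, and that window is indeed $o(k/(\log_2 x)^{1/4})$.

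The genuine gap is the one you half-acknowledge: every estimate in your sketch is carried out only to relative precision $1+o(1)$, which yields the theorem with error $e^{o(k)}$ rather than $e^{O(k/(\log_2 x)^{1/4})}$. At the $e^{o(k)}$ level the statement is barely stronger than Pomerance's Theorem~2.5 (already quoted as $xk^{-k}(1-c)^{k}e^{O(k)}$), so the entire content of Kerner's refinement lives in the error exponent, and your outline does not say how to quantify, uniformly in $c\in(\epsilon,1-\epsilon)$: the PNT error in $\sum_{p>P}p^{-\alpha}$, the stability of $\Gamma(0,t)$ and of the saddle under perturbations of $(\alpha,z)$ of the allowed size, the constant in the local limit theorem, or the replacement of $o(1)$ by $O((\log_2 x)^{-1/4})$ in the location of $\alpha$ (which, as you note, is forced on you because moving $\alpha$ by $\eta/\log_2 x$ moves $x^{\alpha}$ by $e^{\eta k/c}$). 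For the record, Kerner's own proof is not the elementary Rankin-plus-tilting argument but the two-variable analytic saddle-point method in the style of Hildebrand--Tenenbaum \cite{HiT}, comparing $\sum_n z^{\omega(n)}n^{-s}$ with $\zeta(s)^{z}$ and extracting the coefficient by contour integration; that machinery is what delivers the explicit $(\log_2 x)^{-1/4}$. Your route, if the quantitative bookkeeping were actually done, would give a more elementary proof, but as written it proves a weaker statement.
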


\section{Positive moments of $F$}

Let $n = p_1^{e_1} \cdots p_k^{e_k}$. Suppose we write $n$ as a product $a_1 \cdots a_\ell$, where $(a_i, a_j) = 1$ for all $i \neq j$. For each $i \leq k$, there exists a unique $j \leq \ell$ such that $p_i^{e_i} \| a_j$. So, the values of the $e_i$'s are irrelevant to $F(n)$. Indeed, $F(n)$ is the simply the number of set partitions of a $k$ element set. The number of such partitions is $B_k$, the $k$th Bell number. In general, we have
\[F(n) = B_{\omega(n)}.\]

Using this equation, we may rewrite the sum of $F(n)^\beta$. For any $k$, let $\pi(x, k)$ be the number of $n \leq k$ satisfying $\omega(n) = k$. We have
\[\sum_{n \leq x} F(n)^\beta = \sum_{n \leq x} B_{\omega(n)}^\beta = \sum_k B_k^\beta \pi(x, k).\]
A simple lower bound for this sum is
\[\max_k B_{k}^\beta \pi(x, k).\]
By the Prime Number Theorem, the maximum value of $\omega(n)$ for any $n \leq x$ is
\[\frac{\log x}{\log_2 x}\left(1 + O\left(\frac{1}{\log_2 x}\right)\right).\]
This result allows us to put an upper bound on our sum:
\[\sum_{n \leq x} F(n)^\beta \leq \sum_{k \leq 2\frac{\log x}{\log_2 x}} B_k^\beta \pi(x, k) \leq \sum_{k \leq 2\frac{\log x}{\log_2 x}} \max_k B_k^\beta \pi(x, k) \ll \frac{\log x}{\log_2 x} \max_k B_k^\beta \pi(x, k).\]
Putting this together gives us
\[\max_k B_k^\beta \pi(x, k) \ll \sum_{n \leq x} F(n)^\beta \ll \frac{\log x}{\log_2 x} \max_k B_k^\beta \pi(x, k),\]
hence
\[\sum_{n \leq x} F(n)^\beta = \exp(O(\log_2 x)) \max_k B_k^\beta \pi(x, k).\]

Note that in Theorems \ref{F beta = 1}, \ref{F beta > 1}, and \ref{F beta < 1}, $\exp(O(\log_2 x))$ is smaller than the error terms we are trying to obtain, rendering the $\exp(O(\log_2 x))$ irrelevant. In order to estimate the sum of $F(n)^\beta$, we simply need to determine the maximum value of $B_k^\beta \pi(x, k)$.

Before moving further, we provide a short proof of Theorem \ref{F beta < 1}. It is clear that the right-hand side is an upper bound for the left because $F(n) \leq f(n)$ and we already have the sum of $f(n)^\beta$ from Theorem \ref{f beta < 1}. As to showing it is a lower bound, we note that Pollack actually proved Theorem \ref{f beta < 1} by demonstrating that
\[\max_k B_k^\beta \pi(x, k) = x\exp\left((1 + o(1))\left(\frac{(1 - \beta) \log_2 x}{(\log_3 x)^\beta}\right)^{1/(1 - \beta)}\right).\]

Throughout this section and Section $5$, we use the following formula for $B_k$ to varying degrees of precision.

\begin{thm}[{\cite[Eq. (6.27)]{dB}}] We have
\[B_k = \exp\left(k \log k - k \log_2 k - k + \frac{k \log_2 k}{\log k} + \frac{k}{\log k} + O\left(\frac{k(\log_2 k)^2}{(\log k)^2}\right)\right).\]
\end{thm}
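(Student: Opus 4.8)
The plan is to treat this as a saddle-point estimate for the Taylor coefficients of the entire function $e^{e^z-1}=\sum_{k\ge 0}B_k z^k/k!$. By Cauchy's formula, for any $r>0$,
\[\frac{B_k}{k!}=\frac{1}{2\pi}\int_{-\pi}^{\pi}\frac{e^{\,e^{re^{i\theta}}-1}}{r^{k}\,e^{ik\theta}}\,d\theta .\]
First I would choose $r=r(k)$ to be the saddle point, i.e.\ the unique positive root of $ze^z=k+1$, so that $r=W(k+1)$. On the circle $|z|=r$ the modulus of the numerator is $\exp\!\bigl(e^{r\cos\theta}\cos(r\sin\theta)-1\bigr)$, which attains its maximum $e^{e^r-1}$ at $\theta=0$; the standard argument (as in de Bruijn) shows this mass is concentrated in a small arc about $\theta=0$ and that the rest of the circle contributes an exponentially negligible amount. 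Expanding to second order at the saddle and carrying out the Gaussian integral then yields
\[B_k=\frac{k!\,e^{e^r-1}}{r^{k+1}}\cdot\frac{1}{\sqrt{2\pi\,(e^r+(k+1)/r^2)}}\,(1+o(1)).\]

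Next I would take logarithms. Stirling gives $\log k!=k\log k-k+O(\log k)$, and the saddle-point identity $e^r=(k+1)/r$ shows that the Gaussian prefactor is $O(\log k)$ and that $e^r-1=k/r+O(1)$; since also $(k+1)\log r=k\log r+O(\log_2 k)$, this collapses to
\[\log B_k=k\log k-k+\frac{k}{r}-k\log r+O(\log k).\]
It remains only to insert the asymptotic expansion of $r$. Since $W(k+1)-W(k)=O\bigl(1/(k\log k)\bigr)$ is negligible, I may use the classical expansion of $W$ at $k$: writing $L=\log k$ and $\ell=\log_2 k$,
\[r=L-\ell+\frac{\ell}{L}+O\!\left(\frac{\ell^2}{L^2}\right),\qquad \log r=\ell-\frac{\ell}{L}+O\!\left(\frac{\ell^2}{L^2}\right),\qquad \frac{k}{r}=\frac{k}{L}+O\!\left(\frac{k\ell^2}{L^2}\right).\]
Substituting and collecting terms, the piece $-k\log r$ contributes $-k\ell+k\ell/L$, which together with $k/r=k/L$ gives
\[\log B_k=k\log k-k\log_2 k-k+\frac{k\log_2 k}{\log k}+\frac{k}{\log k}+O\!\left(\frac{k(\log_2 k)^2}{(\log k)^2}\right),\]
the leftover $O(\log k)$ being absorbed into the displayed error term. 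Exponentiating gives the claim.

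The main obstacle is the contour estimate: $|e^{e^z-1}|$ on $|z|=r$ does not decay monotonically as $\theta$ moves away from $0$, so one must quantitatively dominate the secondary bumps occurring when $r\sin\theta$ lands near a nonzero multiple of $2\pi$ by the central peak, using that $r\asymp\log k$ is large. An alternative that sidesteps complex analysis is to start from Dobinski's identity $B_k=e^{-1}\sum_{j\ge 0}j^k/j!$, immediate from $e^{e^x-1}=e^{-1}\sum_{j}e^{jx}/j!$, and apply the Laplace method to the sum: by Stirling the summand $j^k/j!$ peaks at $j^\ast=e^{r}$, where its logarithm equals $kr-k+e^r+O(\log k)$, and summing over the $O(\sqrt{k}/\log k)$-wide peak costs a further $O(\log k)$. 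Using $re^r=k$ this reproduces the same expression for $\log B_k$ after the identical substitution of the $W$-expansion, and here the tail bounds are entirely elementary.
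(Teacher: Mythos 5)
This statement is not proved in the paper---it is quoted from de Bruijn (Eq.~(6.27)), and your saddle-point analysis of $e^{e^z-1}$ (equivalently the Moser--Wyman formula, the paper's Lemma 3.4) is exactly the cited source's approach; your bookkeeping with $r$, $\log r$, and $k/r$ correctly reproduces the stated expansion, with the leftover $O(\log k)$ legitimately absorbed into $O\left(k(\log_2 k)^2/(\log k)^2\right)$. The only slip is harmless: $W(k+1)-W(k)=O(1/k)$, not $O(1/(k\log k))$, but this perturbs $k\log r$ and $k/r$ only by $O(1)$, so nothing changes.
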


\subsection{The $\beta > 1$ case}

We prove Theorem \ref{F beta > 1}, which we restate below. Our proof is very similar to the proof of Theorem \ref{f beta > 1}.

\begin{customthm}{1.5} For $\beta > 1$,
\[\sum_{n \leq x} F(n)^\beta = \frac{x^\beta}{L(x)^{2\beta + o(1)}}.\]
\end{customthm}

\begin{proof} The lower bound comes from a straightforward argument. We note that
\[\sum_{n \leq x} F(n)^\beta \geq \max_{n \leq x} F(n)^\beta.\]
The maximum value of $F(n)$ is $B_k$, where $k$ is the largest possible value of $\omega(n)$ for any $n \leq x$. We wrote earlier that this value of $k$ is
\[\frac{\log x}{\log_2 x} + O\left(\frac{\log x}{(\log_2 x)^2}\right).\]
In this case,
\[\log k = \log_2 x - (1 + o(1)) \log_3 x\]
\[\log_2 k = (1 + o(1)) \log_3 x,\]
and
\[B_k = \exp(k \log k + O(k \log_2 k)) = \exp\left(\log x - (2 + o(1)) \frac{\log x \log_3 x}{\log_2 x}\right) = \frac{x}{L(x)^{2 + o(1)}}.\]
Therefore,
\[\sum_{n \leq x} F(n)^\beta \geq \frac{x^\beta}{L(x)^{2\beta + o(1)}}.\]

We now establish the right-hand side as an upper bound.

Let $\mathcal{S}_1$ be the set of $n \leq x$ satisfying $F(n) \leq x/L(x)^{2\beta/(\beta - 1)}$. We have
\[\sum_{n \in \mathcal{S}_1} F(n)^\beta \leq \left(\max_{n \in\mathcal{S}_1} F(n)\right)^{\beta - 1} \sum_{n \leq x} F(n) \leq \frac{x^{\beta - 1}}{L(x)^{2\beta + o(1)}} \cdot xL(x)^{o(1)} = \frac{x^\beta}{L(x)^{2\beta + o(1)}}.\]

Let $\mathcal{S}_2$ be all other $n \leq x$. We show that
\[k > \frac{\log x}{\log_2 x} \left(1 - C \frac{\log_3 x}{\log_2 x}\right)\]
for a positive constant $C$.

By assumption,
\[B_k = F(n) > \frac{x}{L(x)^{2\beta/(\beta - 1)}}.\]
Recall that
\[B_k = \exp(k \log k - (1 + o(1)) k \log_2 k).\]
Therefore,
\[k \log k - (1 + o(1)) k \log_2 k > \log\left(\frac{x}{L(x)^{2\beta/(\beta - 1)}}\right) = \log x - \frac{2\beta}{\beta - 1} \left(\frac{\log x \log_3 x}{\log_2 x}\right).\]
Suppose $k = (1 - \delta)(\log x/\log_2 x)$ for some $\delta = o(1)$ as $x \to \infty$. We have
\[k \log k = (1 - \delta) \frac{\log x}{\log_2 x} \cdot (\log_2 x - (1 + o(1)) \log_3 x) = (1 - \delta) \log x - (1 + o(1)) \frac{\log x \log_3 x}{\log_2 x},\]
\[k \log_2 k = (1 - \delta) \frac{\log x}{\log_2 x} \cdot (1 + o(1)) \log_3 x = (1 + o(1)) \frac{\log x \log_3 x}{\log_2 x}.\]
Hence,
\[k \log k - (1 + o(1)) k \log_2 k = (1 - \delta) \log x - (2 + o(1)) \frac{\log x \log_3 x}{\log_2 x}.\]

If
\[\delta < \left(\frac{2}{\beta - 1} - \epsilon\right) \cdot \frac{\log_3 x}{\log_2 x}\]
for a fixed $\epsilon > 0$, then
\[k \log k - (1 + o(1)) k \log_2 k > \log x - \frac{2\beta}{\beta - 1} \left(\frac{\log x \log_3 x}{\log_2 x}\right).\]
We have
\[k > \frac{\log x}{\log_2 x} \left(1 - C \frac{\log_3 x}{\log_2 x}\right)\]
for a positive constant $C$.

Because $n \in \mathcal{S}_2$, we know that $\omega(n) = (1 + o(1))(\log x/\log_2 x)$. Under this assumption, we only need to evaluate
\[\max_{k > \left(1 - C \frac{\log_3 x}{\log_2 x}\right) \frac{\log x}{\log_2 x}} B_k^\beta \pi(x, k).\]

We now prove that $\pi(x, k) = L(x)^{o(1)}$. Suppose $k > (\log x/\log_2 x)(1 - (1/\log_2 x))$. By Theorem \ref{Pomerance bound},
\[\pi(x, k) = e^{O(k)} = e^{O(\log x/\log_2 x)} = L(x)^{o(1)}.\]
In addition, $B_k^\beta \leq x^{2\beta}/L(x)^{2\beta + o(1)}$. Therefore, $B_k^\beta \pi(x, k) = x^{2\beta}/L(x)^{2\beta + o(1)}$ as well.

Suppose $k \leq (\log x/\log_2 x)(1 - (1/\log_2 x))$. Specifically, we write $k = c(\log x/\log_2 x)$ with $1 - (C \log_3 x/\log_2 x) < c \leq 1 - (1/\log_2 x)$. Reapplying Theorem \ref{Pomerance bound} gives us
\begin{eqnarray*}
B_k^\beta \pi(x, k) & = & xe^{O(k)} \exp(\beta k \log k - (\beta + o(1)) k \log_2 k  - k \log k + k\log(1 - c)) \\
& = & xL(x)^{o(1)} \exp((\beta - 1) k \log k - (1 + o(1)) \beta k \log_2 k + k\log(1 - c)).
\end{eqnarray*}
Note that $k = (1 + o(1)) \log x/\log_2x$ and $k < \log x/\log_2 x$. We bound each term in the exponential separately:
\[\exp((\beta - 1)k \log k) \ll \exp\left((\beta - 1)\frac{\log x}{\log_2 x} (\log_2 x - \log_3 x)\right) = \frac{x^{\beta - 1}}{L(x)^{\beta - 1}},\]
\[\exp((1 + o(1))\beta k \log_2 k) \gg \exp\left((1 + o(1)) \beta \frac{\log x}{\log_2 x} \log_3 x\right) = L(x)^{\beta + o(1)}.\]
Finally,
\[\log(1 - c) < \log\left(\frac{C \log_3 x}{\log_2 x}\right) = -(1 + o(1)) \log_3 x,\]
giving us
\[\exp(k \log(1 - c)) < \exp\left(-(1 + o(1)) \frac{\log x}{\log_2 x} \cdot \log_3 x\right) = L(x)^{-1 + o(1)}.\]
Putting everything together gives us
\[\max_{k > \left(1 - C \frac{\log_3 x}{\log_2 x}\right) \frac{\log x}{\log_2 x}} B_k^\beta \pi(x, k) \ll \frac{x^\beta}{L(x)^{2\beta + o(1)}},\]
which implies that
\[\sum_{n \in \mathcal{S}_2} F(n)^\beta \ll \frac{x^\beta}{L(x)^{2\beta + o(1)}}.\]
\end{proof}

\subsection{The $\beta = 1$ case}

First we show that the sum of $F(n)$ for $n$ having many prime factors is negligible. Before doing so, we record another estimate for $\pi(x, k)$ for certain values of $k$.

\begin{lemma} For all $k \gg (\log x)/(\log_2 x)^2$,
\[B_k \pi(x, k) \ll x.\]
\end{lemma}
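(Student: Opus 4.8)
The plan is to split the range $k \gg (\log x)/(\log_2 x)^2$ into two pieces according to whether the Hildebrand--Tenenbaum estimate (Theorem~\ref{Hildebrand Tenenbaum}) or the Pomerance estimate (Theorem~\ref{Pomerance bound}) is available, and in each piece to show the bound $B_k\,\pi(x,k)\ll x$ directly. In both cases we will use the de Bruijn expansion $B_k = \exp(k\log k - k\log_2 k - k + o(k))$, so the whole problem reduces to checking that the exponent of $\pi(x,k)$ plus $k\log k - k\log_2 k - k$ never exceeds $\log x + O(\log x/\log_2 x)$.

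First, for the moderate range $(\log_2 x)^2 \ll k \ll (\log x)/(\log_2 x)^2$ we may as well already have $B_k\pi(x,k)\ll x$ from the $\beta=1$ analysis, but to be safe I would simply record that in this range Theorem~\ref{Hildebrand Tenenbaum} (or even the cruder Theorem~\ref{Pomerance bound 2}) gives $\pi(x,k) = (x/(k!\log x))\exp(k\log L_0 + o(k))$, and since $\log L_0 \le \log_2 x$ one gets $B_k\pi(x,k) \le (x/\log x)\exp(k\log k - k\log_2 k - k + k\log L_0 - k\log k + k + o(k)) = (x/\log x)\exp(k(\log L_0 - \log_2 k) + o(k))$, which is $\le x/\log x \cdot \exp(o(\log x)) \ll x$ once one observes $\log L_0 \le \log_2 x$ and that $k\log_2 k$ together with the $1/(k!)$ savings keeps things under control; the clean way is to note that $B_k/k! \le e^{k}/(\text{something})$ is not even needed because $k\log L_0 \le k\log_2 x$ and $k = o(\log x)$ here. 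The genuinely new range is $k \gg (\log x)/(\log_2 x)$, i.e. $k = c\log x/\log_2 x$ with $c$ bounded below by a positive constant (and $c \le 1 + o(1)$ by the Prime Number Theorem bound on $\omega$). Here I would invoke Theorem~\ref{Pomerance bound}: if $c \in [1/3, 1 - 1/\log_2 x]$ then $\pi(x,k) = x k^{-k}(1-c)^k e^{O(k)}$, and if $c > 1 - 1/\log_2 x$ then $\pi(x,k) = e^{O(k)}$. In the first subcase,
\[
B_k\,\pi(x,k) = x\,k^{-k}(1-c)^k e^{O(k)}\exp\!\left(k\log k - k\log_2 k - k + o(k)\right) = x\,(1-c)^k \exp\!\left(-k\log_2 k + O(k)\right),
\]
and since $0 < 1-c \le 1$ and $k\log_2 k \ge 0$, this is $\le x\exp(O(k)) $ — wait, that is not yet $\ll x$; the saving must come from $-k\log_2 k$: indeed $-k\log_2 k + O(k) \le -k\log_2 k + O(k) \to -\infty \cdot$ (well, $\log_2 k = \log_3 x + o(\log_3 x)\to\infty$), so $B_k\pi(x,k) \le x\exp(-k\log_2 k + O(k)) \ll x$. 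In the second subcase $B_k\pi(x,k) = e^{O(k)}\exp(k\log k - k\log_2 k - k + o(k)) = \exp(k\log k + O(k)) \le \exp((\log x/\log_2 x)\log_2 x + O(\log x/\log_2 x)) = x\,e^{O(\log x/\log_2 x)}$, which is $x^{1+o(1)}$, \emph{not} $\ll x$ — so this subcase needs the extra $-k\log_2 k$ too; fortunately $k\log_2 k = (1+o(1))(\log x/\log_2 x)\log_3 x \to \infty$ faster than the $O(\log x/\log_2 x)$ error only if we are careful, so here I would instead bound $\pi(x,k)$ for $c$ this close to $1$ by the trivial observation that there are very few such $n$ (the count is $e^{O(k)} = x^{o(1)}$) while $B_k = x^{1+o(1)}/L(x)^{2+o(1)} \le x$, giving the product $\le x^{1+o(1)}$; to get the sharp $\ll x$ one uses that $B_k \le x/L(x)^{2+o(1)}$ beats the $x^{o(1)}$ factor from $\pi$. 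Let me reconsider: the cleanest uniform statement is that for $k = c\log x/\log_2 x$ with $c \ge$ const, $B_k = \exp(k\log k + O(k)) = \exp(\log x - (1+o(1))k\log_2 k \cdot$ — no. I will present it as: $B_k \pi(x,k) \le x\exp(-k\log_2 k + O(k))$ throughout $k\gg\log x/\log_2 x$ (combining the two subcases of Theorem~\ref{Pomerance bound}, using $k^{-k}\cdot\exp(k\log k) = 1$ in the first and absorbing into the de Bruijn expansion in the second), and since $\log_2 k \to\infty$ while the error is $O(k)$, the factor $\exp(-k\log_2 k + O(k))$ is $\le 1$ for $x$ large, hence $B_k\pi(x,k)\ll x$.

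The main obstacle I anticipate is the matching of the two regimes and, in particular, the range $k = \Theta(\log x/\log_2 x)$ with $c$ very close to $1$: there the counting function $\pi(x,k)$ is only controlled up to a factor $e^{O(k)} = e^{O(\log x/\log_2 x)}$, which is $L(x)^{o(1)}$ but \emph{not} $o(1)$, so the inequality $B_k\pi(x,k)\ll x$ is genuinely tight and must be squeezed out of the $-k\log_2 k$ term in de Bruijn's formula rather than from any $o(1)$ slack. The way to handle this cleanly is to keep the $-k\log_2 k$ (equivalently the $1/k!$) term explicit and to observe it dominates $O(k)$ because $\log_2 k = (1+o(1))\log_3 x \to \infty$. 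Once that is pinned down, the remaining ranges ($k$ of order between $(\log_2 x)^2$ and $\log x/\log_2 x$) follow from Theorem~\ref{Pomerance bound 2} or Theorem~\ref{Hildebrand Tenenbaum} with room to spare, since there $k = o(\log x/\log_2 x)$ makes $k\log k = o(\log x)$ and the bound is not even close to being tight.
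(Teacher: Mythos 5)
Your decomposition misses the part of the range where the lemma is actually delicate. The hypothesis is $k \gg \log x/(\log_2 x)^2$, and the hard piece is $\log x/(\log_2 x)^2 \ll k \le \log x/(3\log_2 x)$, where $k(\log_2 x)/\log x \to 0$ and Theorem \ref{Pomerance bound} is unavailable. Your plan treats instead the range $(\log_2 x)^2 \ll k \ll \log x/(\log_2 x)^2$ (which lies \emph{below} the lemma's hypothesis, and where the claimed bound is in fact false: by Theorem \ref{F beta = 1} the maximum of $B_k\,\pi(x,k)$, of size $x\exp\bigl((c+o(1))\sqrt{\log x/\log_2 x}\bigr)$, occurs at $k \asymp \sqrt{\log x \log_2 x}$ inside that range), and then jumps to $k \gg \log x/\log_2 x$. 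For the range in between, none of the justifications you offer closes the gap: appealing to ``the $\beta=1$ analysis'' is circular, since this lemma is exactly what allows that analysis to restrict to $k \ll \log x/(\log_2 x)^2$ (and that analysis produces a quantity larger than $x$ anyway); the step ``$\le (x/\log x)\exp(o(\log x)) \ll x$'' is a non sequitur, since $\exp(o(\log x))$ can dwarf $\log x$ and only yields $x^{1+o(1)}$; and ``room to spare because $k\log k = o(\log x)$'' likewise gives only $x^{1+o(1)}$, not $\ll x$.

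What is needed in that intermediate range, and what the paper does, is a sign analysis of $\log L_0 - \log_2 k$. By Theorem \ref{Pomerance bound 2}, $B_k\,\pi(x,k) \ll x\exp\bigl(k(\log L_0 - \log_2 k) + O(k)\bigr)$, since the $k!$ in the denominator of $\pi(x,k)$ cancels the $k\log k - k$ in de Bruijn's formula. The quantity $\log L_0 - \log_2 k$ decreases as $k$ increases, and at $k = \log x/(\log_2 x)^2$ one has $L_0 = (1+o(1))\log_3 x$, so $\log L_0 = O(\log_4 x)$ while $\log_2 k = (1+o(1))\log_3 x$; hence $\log L_0 - \log_2 k \le -(1+o(1))\log_3 x$ throughout $k \gg \log x/(\log_2 x)^2$, giving $B_k\,\pi(x,k) \ll x\exp(-(1+o(1))k\log_3 x) \ll x$. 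This is precisely why the threshold in the statement is $\log x/(\log_2 x)^2$, and your proposal never engages with it. By contrast, your handling of the top range $k \asymp \log x/\log_2 x$ — after the self-corrections — does match the paper: for $k=\lfloor c'\log x/\log_2 x\rfloor$ with $1/3 \le c' \le 1 - 1/\log_2 x$ the factor $k^{-k}$ cancels $e^{k\log k}$ and the saving comes from $-k\log_2 k$, while for $c' > 1 - 1/\log_2 x$ one uses $\pi(x,k)=e^{O(k)}$ together with $k\log k \le \log x - (1+o(1))\log x\log_3 x/\log_2 x$, exactly as in the paper's first two cases.
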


\begin{proof} We split the proof into three cases:
\begin{enumerate}
\item{\[k \geq \frac{\log x}{\log_2 x} \left(1 - \frac{1}{\log_2 x}\right),\]}
\item{\[\frac{\log x}{3\log_2 x} < k < \frac{\log x}{\log_2 x} \left(1 - \frac{1}{\log_2 x}\right),\]}
\item{\[\frac{\log x}{(\log_2 x)^2} \ll k \leq \frac{\log x}{3\log_2 x},\]}
\end{enumerate}

We show each of these cases has the correct upper bound.

Suppose $k \geq (\log x/\log_2 x)(1 - (1/\log_2 x))$. By Theorem \ref{Pomerance bound},
\[\pi(x, k) = \exp(O(k)).\]
We have
\begin{eqnarray*}
B_k \pi(x, k) & = & \exp(k \log k - (1 + o(1))(k \log_2 k)) \\
& \ll & \exp(k \log k) \\
& \ll & \exp\left(\frac{\log x}{\log_2 x} \left(1 + O\left(\frac{1}{\log_2 x}\right)\right) (\log_2 x - (1 + o(1))\log_3 x)\right) \\
& = & \frac{x}{L(x)^{1 + o(1)}}.
\end{eqnarray*}

Suppose $k = \lfloor c \log x/\log_2 x \rfloor$ with
\[\frac{1}{3} < c < 1 - \frac{1}{\log_2 x}.\]
By Theorem \ref{Pomerance bound},
\[\pi(x, k) = xk^{-k} (1 - c)^k e^{O(k)} \ll xk^{-k} e^{O(k)} = x\exp(-k \log k + O(k)).\]
Therefore,
\[B_k \pi(x, k) \ll x\exp(-(1 + o(1)) k \log_2 k) \ll x.\]

Suppose
\[\frac{\log x}{(\log_2 x)^2} \ll k \leq \frac{\log x}{3 \log_2 x}.\]
By Theorem \ref{Pomerance bound 2},
\[\pi(x, k) = \frac{x}{k! \log x} \exp(k \log L_0 + o(k)),\]
with $L_0 = \log_2 x - \log k - \log_2 k$. Recall that
\[B_k = \exp(k \log k - k \log_2 k + O(k)).\]
Thus,
\[B_k \pi(x, k) = x\exp(k \log L_0 - k \log_2 k + O(k)).\]
Note that $\log L_0 - \log_2 k$ decreases as $k$ increases. For $k = \log x/(\log_2 x)^2$,
\[L_0 = \log_2 x - (\log_2 x - 2\log_3 x) - (1 + o(1)) \log_3 x = (1 + o(1)) \log_3 x,\]
\[\log L_0 = O(\log_4 x),\]
\[\log_2 k = (1 + o(1)) \log_3 x.\]
Therefore,
\[B_k \pi(x, k) \ll x\exp(-(\log_3 x + o(1)) k) \ll x.\]
\end{proof}

Before proving the main result, we state a few preliminary results.

\begin{lemma} We have
\[W(k) = \log k - \log_2 k + o(1).\]
\end{lemma}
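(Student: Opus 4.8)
The plan is to work directly from the defining relation $W(k)e^{W(k)} = k$. Taking logarithms gives $W(k) + \log W(k) = \log k$, i.e.
\[W(k) = \log k - \log W(k).\]
So everything reduces to controlling $\log W(k)$ to within $o(1)$.

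First I would establish the crude asymptotic $W(k) \sim \log k$. From $W(k) + \log W(k) = \log k$ and the fact that $W$ is increasing and unbounded on $[0,\infty)$ (so $W(k) \to \infty$ as $k \to \infty$), we get $\log W(k) = o(W(k))$, hence $W(k) = (1 + o(1))\log k$; in particular $\log W(k) = \log_2 k + o(1)$ is not yet good enough to conclude, so I bootstrap once more. Since $W(k) = (1+o(1))\log k$, we have $\log W(k) = \log_2 k + \log(1 + o(1)) = \log_2 k + o(1)$. Substituting back,
\[W(k) = \log k - \log W(k) = \log k - \log_2 k + o(1),\]
which is the claim. (If one wants to be careful about the chain of substitutions, one can instead write $\log W(k) = \log(\log k - \log W(k)) = \log_2 k + \log\!\big(1 - \log W(k)/\log k\big)$ and note that $\log W(k)/\log k \to 0$, so the last term is $o(1)$.)

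I expect no genuine obstacle here: the only subtlety is making sure the bootstrap is done to sufficient order, i.e. that after obtaining $W(k)\sim\log k$ one re-substitutes to get the $-\log_2 k$ term with an honest $o(1)$ remainder rather than stopping at $W(k) = \log k + O(\log_2 k)$. Everything else — monotonicity and unboundedness of the Lambert $W$ function on the positive reals, and the elementary estimate $\log(1+u) = O(u)$ for small $u$ — is standard. This lemma will presumably be used with $k$ the number of prime factors, so $k \to \infty$ along the relevant range, and the statement is to be read in that regime.
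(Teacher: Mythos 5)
Your argument is correct: taking logarithms of $W(k)e^{W(k)}=k$ to get $W(k)=\log k-\log W(k)$, deducing $W(k)\sim\log k$ from $\log W(k)=o(W(k))$, and then re-substituting to obtain $\log W(k)=\log_2 k+o(1)$ is exactly the standard derivation, and your bootstrap is carried out to the right order. The paper itself states this lemma without proof (treating it as a known asymptotic for the Lambert $W$ function), so there is nothing in the source to compare against beyond noting that your proof supplies the routine verification the authors omitted.
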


\begin{lemma}[{\cite{MW}}] \label{Bell lemma} For $h = O(1)$, we have
\[B_{k + h} = \frac{(k + h)!}{W(k)^{k + h}} \cdot \frac{\exp(e^{W(k)} - 1)}{(2\pi(W(k)^2 + W(k)) e^{W(k)})^{1/2}} \cdot (1 + O(e^{-W(k)})).\]
\end{lemma}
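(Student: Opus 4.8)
\emph{Proof sketch.} The plan is a saddle-point analysis of the exponential generating function $\sum_{n\ge0}B_nz^n/n!=\exp(e^z-1)$. Cauchy's formula gives, for every $R>0$ and with the substitution $z=Re^{i\theta}$,
\[B_{k+h}=\frac{(k+h)!}{2\pi R^{k+h}}\int_{-\pi}^{\pi}\exp\big(\psi(\theta)\big)\,d\theta,\qquad\psi(\theta)=e^{Re^{i\theta}}-1-i(k+h)\theta.\]
Since $\psi(-\theta)=\overline{\psi(\theta)}$, the integral is real and $\theta=0$ is a critical point of $\operatorname{Re}\psi$ for any $R$; I would pin down $R$ by annihilating the linear term of $\operatorname{Im}\psi$, i.e.\ by taking $R=W(k+h)$, the positive root of $Re^R=k+h$. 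A short computation then gives $\psi(0)=e^R-1$, $\psi''(0)=-(R^2+R)e^R$, and $\psi^{(j)}(0)=O(R^je^R)$ in general; recall also from the previous lemma that $R=(1+o(1))\log k$.

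Next I would cut the circle into a major arc $|\theta|\le\theta_0$ and its complement, with $\theta_0$ chosen (for instance $\theta_0=k^{-2/5}$) so that $k\theta_0^2\to\infty$ while $R^3e^R\theta_0^3\to0$; both are possible since $R\asymp\log k=o(k^{1/4})$. On the minor arc one has $|\exp\psi(\theta)|\le\exp(e^{R\cos\theta}-1)\le\exp(e^{R\cos\theta_0}-1)$, and the elementary inequality $1-\cos\theta\gg\theta^2$ on $[-\pi,\pi]$ gives $e^R-e^{R\cos\theta_0}\gg e^RR\theta_0^2\asymp k\theta_0^2\to\infty$, so the minor arc is smaller than the eventual main term by a super-polynomial factor $\exp(-ck\theta_0^2)$ and is negligible. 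On the major arc I would Taylor-expand $\psi$ about $0$: the quadratic term produces the Gaussian integral $\exp(e^R-1)\sqrt{2\pi/((R^2+R)e^R)}$, the cubic and higher terms are controlled by the bounds on $\psi^{(j)}(0)$ together with the choice of $\theta_0$, and carrying the first correction shows the relative error is $O\big(\psi^{(4)}(0)/\psi''(0)^2+\psi'''(0)^2/\psi''(0)^3\big)=O(e^{-R})$. Reassembling the pieces yields
\[B_{k+h}=\frac{(k+h)!}{R^{k+h}}\cdot\frac{\exp(e^R-1)}{\big(2\pi(R^2+R)e^R\big)^{1/2}}\big(1+O(e^{-R})\big),\qquad R=W(k+h).\]

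The final step is to replace $W(k+h)$ by $W(k)$, and this is where the accounting must be done carefully, since the claimed error $e^{-W(k)}\asymp(\log k)/k$ is fairly tight. Because $h=O(1)$ and $W'(t)\ll1/t$ we have $W(k+h)-W(k)=O(1/k)$; the exponent $e^R-(k+h)\log R$ has vanishing derivative at the saddle, so perturbing $R$ by $O(1/k)$ shifts it by only $O((1/k)^2Re^R)=O(1/(k\log k))$, while the algebraic factors $R^{-(k+h)}$ and $(R^2+R)e^R$ change by a factor $1+O(1/k)$; all of these perturbations are $o(e^{-W(k)})$ and disappear into the error term. I expect the main obstacle to be not any single estimate but the combined bookkeeping — arranging that the minor-arc bound, the truncation of the saddle-point expansion, and the substitution $W(k+h)\mapsto W(k)$ all fit simultaneously inside $O(e^{-W(k)})$ — which is precisely what the argument of Moser and Wyman \cite{MW} accomplishes.
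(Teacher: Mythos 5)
The paper does not prove this lemma at all; it quotes it directly from Moser and Wyman \cite{MW}, whose original argument is precisely the saddle-point analysis of $\exp(e^z-1)$ that you outline (saddle at $Re^R=k+h$, Gaussian main term, minor-arc bound, then transfer from $W(k+h)$ to $W(k)$), so your proposal matches the cited source's approach and is correct in outline. The only blemish is a harmless slip in the transfer step, where the second derivative of $e^R-(k+h)\log R$ is quoted as being of order $Re^R$ rather than $e^R+(k+h)/R^2\asymp e^R$; the resulting perturbation is still $O(1/(k\log k))=o(e^{-W(k)})$, so the conclusion is unaffected.
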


\begin{cor} We have
\[\frac{B_{k + 1}}{B_k} = \frac{k + 1}{W(k)} (1 + O(e^{-W(k)})).\]
\end{cor}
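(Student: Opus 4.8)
The plan is to obtain this as an immediate consequence of Lemma \ref{Bell lemma}, which gives an asymptotic expansion for $B_{k+h}$ valid uniformly for $h = O(1)$. The crucial observation is that in that expansion the "bulk" factor
\[\frac{\exp(e^{W(k)} - 1)}{(2\pi(W(k)^2 + W(k)) e^{W(k)})^{1/2}}\]
depends only on $k$ (through $W(k)$) and \emph{not} on the shift $h$. Hence if I write down the formula once with $h = 0$ and once with $h = 1$ and take the quotient, this entire factor cancels exactly, and all that survives is the ratio of the remaining pieces.

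Concretely, I would apply Lemma \ref{Bell lemma} with $h = 1$ and with $h = 0$ and divide, obtaining
\[\frac{B_{k+1}}{B_k} = \frac{(k+1)!/k!}{W(k)^{k+1}/W(k)^k} \cdot \frac{1 + O(e^{-W(k)})}{1 + O(e^{-W(k)})} = \frac{k+1}{W(k)} \cdot \frac{1 + O(e^{-W(k)})}{1 + O(e^{-W(k)})}.\]
To finish, I need to collapse the quotient of error factors into a single $1 + O(e^{-W(k)})$. For this I use the preceding lemma, $W(k) = \log k - \log_2 k + o(1)$, which shows $W(k) \to \infty$ and therefore $e^{-W(k)} \to 0$ as $k \to \infty$; consequently $1/(1 + O(e^{-W(k)})) = 1 + O(e^{-W(k)})$ and the product of the two error factors is again $1 + O(e^{-W(k)})$. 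This yields the claimed identity.

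There is essentially no obstacle here: the statement is a formal manipulation of the Moser--Wyman formula, and the only point requiring a word of justification is that $e^{-W(k)}$ is genuinely small, i.e.\ that $W(k) \to \infty$, which is exactly what the lemma on $W(k)$ supplies. One should also note that Lemma \ref{Bell lemma} is stated for $h = O(1)$, so both $h = 0$ and $h = 1$ are covered with the same implied constant in the error term, which is what makes the cancellation of the error factors legitimate.
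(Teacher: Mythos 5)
Your proposal is correct and is exactly the intended argument: the paper states this corollary as an immediate consequence of Lemma \ref{Bell lemma} (applied with $h=0$ and $h=1$, the $h$-independent factor cancelling in the quotient), with $W(k)\to\infty$ justifying the merging of the two error factors into a single $1+O(e^{-W(k)})$. Nothing further is needed.
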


\begin{cor} For all $k \ll \log x/(\log_2 x)^2$,
\[\frac{B_{k + 1} \pi(x, k + 1)}{B_k \pi(x, k)} = \frac{L_0}{W(k)} \left(1 + O\left(e^{-W(k)} + \frac{1}{k} + \frac{\log L_0}{L_0}\right)\right).\]
\end{cor}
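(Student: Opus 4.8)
The plan is to combine the preceding corollary, which gives $B_{k+1}/B_k = \frac{k+1}{W(k)}(1 + O(e^{-W(k)}))$, with the matching estimate
\[\frac{\pi(x, k+1)}{\pi(x, k)} = \frac{L_0}{k+1}\left(1 + O\left(\frac1k + \frac{\log L_0}{L_0}\right)\right);\]
the product of these two telescopes the factors $k+1$ and yields exactly the claimed formula, the extra error $O(e^{-W(k)})$ already appearing on the right. Since $e^{-W(k)}$, $1/k$ and $\log L_0/L_0$ are each bounded below by a positive constant once $k$ is bounded, for $k = O(1)$ the corollary merely asserts that the ratio has exact order $L_0/W(k) \asymp \log_2 x$, which is immediate from Landau's formula $\pi(x,j) \sim \frac{x}{\log x}\cdot\frac{(\log_2 x)^{j-1}}{(j-1)!}$. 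So we may assume $k \to \infty$ and it remains to prove the displayed estimate for $\pi(x,k+1)/\pi(x,k)$.

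Suppose first that $k \ll (\log_2 x)^2$. Then $\log k + \log_2 k = O(\log_3 x)$, so $L_0 = (1 + O(\log_3 x/\log_2 x))\log_2 x$, and on this range $\pi(x,k)$ is controlled with relative error $o(1)$ by Sathe--Selberg-type asymptotics (Theorem \ref{Sathe} together with the standard Selberg--Delange treatment past $k \asymp \log_2 x$); dividing the estimates for consecutive $k$ gives $\pi(x,k+1)/\pi(x,k) = \frac{\log_2 x}{k+1}(1 + O(1/k + \log_3 x/\log_2 x))$, which is the required estimate since here $\log_2 x = L_0(1 + O(\log L_0/L_0))$. This case is routine, so we concentrate on the main range $(\log_2 x)^2 \ll k \ll \log x/(\log_2 x)^2$, where we apply Theorem \ref{Hildebrand Tenenbaum} to both $\pi(x,k+1)$ and $\pi(x,k)$. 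Writing $M', R', y', \xi'$ for the parameters at $k+1$,
\[\frac{\pi(x, k+1)}{\pi(x, k)} = \frac{1}{k+1}\exp\!\left((k+1)\!\left(\log M' + \frac1{M'} + O(R')\right) - k\!\left(\log M + \frac1M + O(R)\right)\right).\]
The first point is that the parameters vary slowly: treating $k$ as a real variable, $dL_0/dk = O(1/k)$, hence $dy/dk = \frac1{L_0}(1 + o(1))$, $d(\log\xi)/dk = -\frac1k(1 + o(1))$, and therefore $dM/dk = -\frac1k(1 + o(1))$; consequently $M' - M = O(1/k)$ and, since $M \asymp L_0 \to \infty$, $\log M' - \log M = O(1/(kL_0))$. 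Granting the analogous smoothness of the error terms (this is the point flagged below), the exponent telescopes: $(k+1)\log M' - k\log M = \log M + O(1/L_0)$, $(k+1)/M' - k/M = 1/M + O(1/L_0^2)$, and the error terms combine to $O(R)$; using $R = \frac1{L_0}(\frac1y + \frac1{L_0}) = \frac1k + \frac1{L_0^2}$ and $e^{1/M} = 1 + O(1/L_0)$ this gives $\pi(x,k+1)/\pi(x,k) = \frac{M}{k+1}(1 + O(1/k + \log L_0/L_0))$.

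The second point is the identity $M = L_0 + O(\log L_0)$. From $\log\xi = \log_2 x - \log y - \log_2 y$ and $\log y = \log k - \log L_0$ one computes $M = L_0 + (\log_2 k - \log_2 y) + \log_2\xi - \gamma$. In the present range $k \gg L_0$, so $\log L_0/\log k \le \tfrac12 + o(1)$ and hence $\log_2 k - \log_2 y = -\log(1 - \log L_0/\log k) = O(1)$; moreover $\log\xi = L_0 + \log L_0 + O(1) \asymp L_0$, so $\log_2\xi = \log L_0 + o(1)$. Therefore $M - L_0 = \log L_0 + O(1) = O(\log L_0)$, whence $\frac{M}{k+1} = \frac{L_0}{k+1}(1 + O(\log L_0/L_0))$. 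Combining the two points gives the displayed estimate for $\pi(x,k+1)/\pi(x,k)$ on the main range, and multiplying by the preceding corollary finishes the proof.

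The step I expect to be the main obstacle is making the ``slow variation'' argument rigorous. Theorem \ref{Hildebrand Tenenbaum} is stated with an error of shape $\exp(O(kR))$, and $kR = 1 + k/L_0^2$ is far from $o(1)$ once $k$ is, say, a fixed power of $\log x$; so one cannot merely divide two instances of that estimate and hope the errors cancel. What is actually needed is that the implied error in the Hildebrand--Tenenbaum expansion is piecewise monotone, or $C^1$-controlled, in $k$ on unit intervals, so that the difference of the errors at $k$ and $k+1$ is $O(R)$ rather than $O(kR)$; the derivative of $R$ in $k$ is $O(R/k)$, so this would suffice. That refinement is not in the statement quoted here, but it can be extracted from the saddle-point analysis underlying that theorem (or from the sharper $k$-uniform forms of it in the literature). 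The same bookkeeping must also be checked for the intermediate range $\log_2 x \ll k \ll (\log_2 x)^2$ used in the routine case above.
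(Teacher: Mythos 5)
Your reduction of the corollary to the single estimate
\[\frac{\pi(x, k+1)}{\pi(x, k)} = \frac{L_0}{k}\left(1 + O\left(\frac{1}{k} + \frac{\log L_0}{L_0}\right)\right)\]
is the right shape (the discrepancy between $k$ and $k+1$ in the denominator is harmless, being absorbed by the $O(1/k)$), and multiplying by the preceding corollary does finish the job. But the way you try to obtain this estimate --- dividing two instances of Theorem \ref{Hildebrand Tenenbaum} at $k+1$ and $k$ --- has a genuine gap, and you flag it yourself: the theorem only guarantees the exponent up to an additive error $O(kR)$, with $kR \asymp 1 + k/L_0^2$, which is far larger than the $O(1/k + \log L_0/L_0)$ you need in the exponent of the ratio; and nothing in the statement (or in any statement of that type) says the error term is ``slowly varying'' or $C^1$-controlled in $k$, since the implied error functions at $k$ and $k+1$ are a priori unrelated. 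Deferring this to ``the saddle-point analysis underlying the theorem'' or to ``sharper $k$-uniform forms in the literature'' is exactly the missing step, not a proof of it. The paper avoids the issue entirely: Hildebrand and Tenenbaum prove the ratio estimate itself as a separate result, namely \cite[Cor.~3]{HiT}, which gives $\pi(x,k+1)/\pi(x,k) = (L_0/k)(1 + O(\log L_0/L_0))$ uniformly for $k \ll (\log x)/(\log_2 x)^2$, and the corollary follows in one line by combining this with Corollary 3.5. So the correct repair of your argument is to cite the ratio result directly rather than attempt to re-derive it from the pointwise asymptotic.

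A secondary soft spot: in the range $k \ll (\log_2 x)^2$ you appeal to ``Sathe--Selberg-type asymptotics with relative error $o(1)$,'' but Theorem \ref{Sathe} as stated only gives upper and lower bounds with unspecified constants $C_1, C_2$, so dividing consecutive values would only give the ratio up to a bounded factor, not $1 + o(1)$; you would again need a genuine asymptotic (Selberg--Delange, or once more \cite[Cor.~3]{HiT}, whose range covers this case) to close that part. Your observation that for bounded $k$ the error term is $\gg 1$ and only the order of the ratio matters is correct and is a fine way to dispose of $k = O(1)$.
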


\begin{proof} By \cite[Cor. $3$]{HiT},
\[\frac{\pi(x, k + 1)}{\pi(x, k)} = \frac{L_0}{k} \left(1 + O\left(\frac{\log L_0}{L_0}\right)\right)\]
for $k \ll (\log x)/(\log_2 x)^2$. Combining this with the previous corollary gives us the desired result.
\end{proof}

\begin{customthm}{1.1} We have
\[\sum_{n \leq x} F(n) = x\exp\left((c + o(1))\sqrt{\frac{\log x}{\log_2 x}}\right),\]
where
\[c = 2\sqrt{2} e^{-\gamma/2},\]
with $\gamma$ being the Euler-Mascheroni constant.
\end{customthm}

\begin{proof} Because of Lemma $3.3$, we may assume that $\omega(n) \ll \log x/(\log_2 x)^2$. We maximize $B_k \pi(x, k)$ by selecting an optimal value of $k$.

We use the previous corollary to show that the maximum value of $B_k \pi(x, k)$ occurs at $k = (\log x)^{1/2} (\log_2 x)^{C + o(1)}$ for some constant $C$. For an optimal value of $k$,
\[\frac{B_{k + 1} \pi(x, k + 1)}{B_k \pi(x, k)} \leq 1 \leq \frac{B_k \pi(x, k)}{B_{k - 1} \pi(x, k - 1)},\]
which occurs when $L_0 \sim W(k) \sim \log k$. Recall that
\[L_0 = \log_2 x - \log k - \log_2 k = \log_2 x - (1 + o(1)) \log k.\]
Solving
\[\log_2 x - (1 + o(1)) \log k = (1 + o(1)) \log k\]
gives us
\[k = (\log x)^{(1/2) + o(1)}.\]

We may refine this estimate further. Let $k = (\log x)^{1/2} (\log_2 x)^{C + o(1/\log_3 x)}$ with $C = o(\log_2 x/\log_3 x)$. We now show that $C = O(1)$, then use this result to bound $C$ more precisely. We have
\[L_0 = \log_2 x - \left(\frac{1}{2} \log_2 x + C \log_3 x\right) - (1 + o(1)) \log_3 x = \frac{1}{2} \log_2 x - (C + 1 + o(1)) \log_3 x,\]
\[W(k) = \left(\frac{1}{2} \log_2 x + C \log_3 x\right) - (1 + o(1)) \log_3 x = \frac{1}{2} \log_2 + (C - 1 + o(1)) \log_3 x.\]
Hence,
\[\frac{L_0}{W(k)} = 1 - (4C + o(1)) \frac{\log_3 x}{\log_2 x}.\]
Note that
\[\frac{L_0}{W(k)} = 1 + O\left(e^{-W(k)} + \frac{1}{k} + \frac{\log L_0}{L_0}\right) = 1 + O\left(\frac{\log_3 x}{\log_2 x}\right).\]
Therefore, the optimal value of $C$ is bounded. We may assume that $k = (\log x)^{1/2} (\log_2 x)^C$ with $C = O(1)$.

Using Theorem \ref{Hildebrand Tenenbaum}, we prove that if
\[k = (\log x)^{1/2} (\log_2 x)^C,\]
then
\[B_k \pi(x, k)\]
\[= x\exp((2 - 4C)(\log x)^{1/2} (\log_2 x)^{C - 1} \log_3 x + 2(2 -\gamma - \log 2 + o(1)) (\log x)^{1/2} (\log_2 x)^{C - 1}).\]
The maximum value of this function occurs at $C = 1/2$, at which point we obtain the desired formula.

We find the values of the variables used in Theorem \ref{Hildebrand Tenenbaum}:
\[L_0 = \log_2 x - \log k - \log_2 k = \log_2 x - \left(\frac{1}{2} + o(1)\right) \log_2 x - o(\log_2 x) \sim \frac{1}{2} \log_2 x,\]
\[y = \frac{k}{L_0} \sim 2(\log x)^{1/2} (\log_2 x)^{C - 1},\]
\[\xi = \frac{\log x}{y \log y} \sim \frac{\log x}{2(\log x)^{1/2} (\log_2 x)^{C - 1} \cdot \frac{1}{2} \log_2 x} \sim \frac{(\log x)^{1/2}}{(\log_2 x)^C},\]
\begin{eqnarray*}
M & = & \log \xi + \log_2 \xi - \log L_0 - \gamma \\
& = & \left(\frac{1}{2} \log_2 x - C \log_3 x + o(1)\right) + (\log_3 x - \log 2 + o(1)) \\
& & - (\log_3 x - \log 2 + o(1)) - \gamma \\
& = & \frac{1}{2} \log_2 x - C \log_3 x - \gamma + o(1),
\end{eqnarray*}
\[\log M = \log_3 x - \log 2 - \frac{2C \log_3 x}{\log_2 x} - (1 + o(1))\frac{2\gamma}{\log_2 x},\]
\[R = \frac{1}{L_0} \left(\frac{1}{\log y} + \frac{1}{L_0}\right) = O\left(\frac{1}{(\log_2 x)^2}\right).\]

We now apply Theorem \ref{Hildebrand Tenenbaum}:
\begin{eqnarray*}
\pi(x, k) & = & \frac{x}{k! \log x} \exp\left(k \log M + \frac{k}{M} + O(kR)\right) \\
& = & x \exp\left(k \log M + \frac{k}{M} - k \log k + k + O\left(\frac{k}{(\log_2 x)^2}\right)\right).
\end{eqnarray*}

Recall that
\[B_k = \exp\left(k \log k - k \log_2 k - k + \frac{k \log_2 k}{\log k} + (1 + o(1)) \frac{k}{\log k}\right).\]
Therefore,
\[B_k \pi(x, k) = x\exp\left(k \log M - k \log_2 k + \frac{k}{M} + \frac{k \log_2 k}{\log k} + (1 + o(1)) \frac{k}{\log k}\right).\]
Note that
\begin{eqnarray*}
\log M - \log_2 k & = & \left(\log_3 x - \log 2 - \frac{2C \log_3 x}{\log_2 x} - \frac{2\gamma + o(1)}{\log_2 x}\right) \\
& & - \left(\log_3 x - \log 2 + \frac{2C \log_3 x}{\log_2 x}\right) \\
& = & -\frac{4C \log_3 x + 2\gamma + o(1)}{\log_2 x}.
\end{eqnarray*}

In addition,
\[\frac{1}{M} \sim \frac{2}{\log_2 x},\]
\[\frac{1}{\log k} \sim \frac{2}{\log_2 x},\]
\[\frac{\log_2 k}{\log k} = \left(\frac{1}{2} \log_2 x + C \log_3 x\right)^{-1} (\log_3 x - \log 2 + o(1)) = \frac{2\log_3 x - 2 \log 2 + o(1)}{\log_2 x}.\]

Putting everything together gives us
\[B_k \pi(x, k) = x\exp\left(k\left(\frac{(2 - 4C)\log_3 x + (4 - 2\gamma - 2\log 2 + o(1))}{\log_2 x}\right)\right).\]
Hence,
\[B_k \pi(x, k)\] 
\[ = x\exp((2 - 4C)(\log x)^{1/2} (\log_2 x)^{C - 1} (\log_3 x) + 2(2 - \gamma - \log 2 + o(1)) (\log x)^{1/2} (\log_2 x)^{C - 1}).\]

We choose $C$ to maximize $B_k \pi(x, k)$. Specifically, we factor out various terms which are independent of $C$ and maximize
\[(\log_2 x)^C ((1 - 2C)(\log_3 x) + 2 - \gamma - \log 2).\]
Its derivative with respect to $C$ is
\[(\log_2 x)^C (\log_3 x)((1 - 2C)(\log_3 x) - (\gamma + \log 2)).\]
Setting this quantity equal to $0$ gives us
\[C = \frac{1}{2} - \left(\frac{\gamma + \log 2}{2}\right) \frac{1}{\log_3 x}.\]
From this, we obtain
\[k = (\log x)^{1/2} (\log_2 x)^{C + o(1/\log_3 x)} \sim \sqrt{(1/2e^\gamma) \log x \log_2 x}.\]
Plugging this value of $k$ into our formula for $B_k \pi(x, k)$ gives us our desired result.
\end{proof}

We note here that Warlimont obtained his lower bound by setting $k \sim \sqrt{\log x \log_2 x}$, whereas we have used $k \sim \sqrt{(1/2e^\gamma) \log x \log_2 x}$.

\section{Positive moments of $G$}

The techniques we used for $F$ also hold for $G$. Here, we use the ordered Bell numbers, which we denote $a_k$, instead of the unordered Bell numbers. We now have $G(n) = a_{\omega(n)}$. Thus,
\[\sum_{n \leq x} G(n)^\beta = \sum_k a_k^\beta \pi(x, k) = e^{O(\log_2 x)} \max_k a_k^\beta \pi(x, k).\]
Once again, the $e^{O(\log_2 x)}$ term is negligible, allowing us to focus entirely on maximizing $a_k^\beta \pi(x, k)$.

\begin{thm}[{\cite{Sk}}] We have
\[a_k \sim \frac{1}{2 \log 2} \cdot \frac{k!}{(\log 2)^k} = \exp(k \log k - (1 + \log_2 2)k + O(\log k)).\]
\end{thm}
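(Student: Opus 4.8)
The plan is to read off the asymptotics of $a_k$ from its exponential generating function and then convert the result into the stated $\exp(\cdots)$ form using Stirling's formula. Since $a_k$ counts ordered set partitions of $\{1,\dots,k\}$, i.e.\ sequences of nonempty blocks with union $\{1,\dots,k\}$, its EGF is
\[\sum_{k \geq 0} a_k \frac{z^k}{k!} = \sum_{j \geq 0} (e^z - 1)^j = \frac{1}{2 - e^z},\]
valid near the origin and extended by analytic continuation. The function $1/(2 - e^z)$ is meromorphic with simple poles exactly at the points $z_n = \log 2 + 2\pi i n$, $n \in \mathbb{Z}$ (there $e^z = 2$, and the derivative $-e^z = -2 \neq 0$). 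The pole nearest the origin is $z_0 = \log 2$, and every other pole has modulus at least $\sqrt{(\log 2)^2 + 4\pi^2} > 2$.

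The first step is to extract the coefficient by Cauchy's formula,
\[\frac{a_k}{k!} = \frac{1}{2\pi i}\oint_{|z| = r} \frac{dz}{(2 - e^z)\,z^{k+1}}\qquad (0 < r < \log 2),\]
and then deform the contour out to the circle $|z| = 2$, which lies strictly between $\log 2$ and the next poles. The residue theorem contributes only the pole at $z = \log 2$, where the residue of $1/((2 - e^z)z^{k+1})$ is $-1/(2(\log 2)^{k+1})$, so
\[\frac{a_k}{k!} = \frac{1}{2(\log 2)^{k+1}} + \frac{1}{2\pi i}\oint_{|z| = 2} \frac{dz}{(2 - e^z)\,z^{k+1}}.\]
On $|z| = 2$ the integrand is $O(2^{-(k+1)})$, because $2 - e^z$ is bounded away from $0$ there, so the contour integral is $O(2^{-k})$. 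Since $1/\log 2 < 2$, this is exponentially smaller than the main term $(\log 2)^{-(k+1)}$, and we obtain
\[a_k = \frac{k!}{2(\log 2)^{k+1}}\bigl(1 + O((\tfrac{\log 2}{2})^{k})\bigr) \sim \frac{1}{2\log 2}\cdot \frac{k!}{(\log 2)^k}.\]
Finally, applying Stirling in the form $k! = \exp(k\log k - k + O(\log k))$, together with $-k\log\log 2 = -k\log_2 2$ and the observation that the constant $1/(2\log 2)$ contributes only $O(1)$, rewrites the right-hand side as $\exp(k\log k - (1 + \log_2 2)k + O(\log k))$, which is the second form claimed.

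I do not expect a serious obstacle: the only point needing (routine) care is the contour-shift estimate — verifying that $2 - e^z$ is bounded below on $|z| = 2$ and that the connecting arcs are negligible — and, if desired, treating the trivial cases $a_0, a_1$ separately, though these do not affect the asymptotics. A purely real-variable alternative would start from $a_k = \tfrac12\sum_{j \geq 1} j^k 2^{-j}$ (the coefficient of $z^k/k!$ in $\tfrac12\sum_{j \geq 0} e^{jz}2^{-j}$) and apply Laplace's method around the maximizing index $j^* = k/\log 2$; this yields the same leading constant but demands a slightly more delicate tail bound, so I would prefer the generating-function route.
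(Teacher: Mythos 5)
Your proposal is correct, and it is a complete, self-contained argument for a statement that the paper does not prove at all: Theorem 4.1 is simply quoted from Sklar's 1952 paper, so there is no in-paper proof to compare against. Your route --- the exponential generating function $1/(2-e^z)$, the simple dominant pole at $z=\log 2$ with residue $-1/2$, the contour shift to $|z|=2$ where $2-e^z$ is bounded away from zero (all other poles having modulus $\sqrt{(\log 2)^2+4\pi^2}>2$), and then Stirling to pass to the $\exp(k\log k-(1+\log_2 2)k+O(\log k))$ form --- is the standard singularity-analysis proof of the Fubini-number asymptotic and in fact gives more than the paper needs, namely an exponentially small relative error $O((\log 2/2)^k)$ rather than just $\sim$. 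Two cosmetic remarks: there are no ``connecting arcs'' to worry about, since the deformation is between two concentric circles and the residue theorem for the annulus $r<|z|<2$ applies directly; and your conversion $-k\log\log 2=-k\log_2 2$ correctly matches the paper's iterated-logarithm convention, which is worth stating explicitly since $\log_2$ is more commonly read as a base-$2$ logarithm.
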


\subsection{The $\beta > 1$ case}

In the previous section, we established that if $\beta > 1$, then
\[\sum_{n \leq x} F(n)^\beta = \frac{x^\beta}{L(x)^{2\beta + o(1)}}.\]
Already having this bound allows us to write a short proof for the sum of $G(n)^\beta$.

\begin{customthm}{1.7} For $\beta > 1$,
\[\sum_{n \leq x} G(n)^\beta = \frac{x^\beta}{L(x)^{\beta + o(1)}}.\]
\end{customthm}

\begin{proof} First, we establish the right-hand side as a lower bound.
\[\sum_{n \leq x} G(n)^\beta \geq \max_{n \leq x} G(n)^\beta = \max_{n \leq x} a_{\omega(n)}^\beta.\]
Recall that the maximum value of $k = \omega(n)$ is $(1 + O(1/\log_2 x)) \log x/\log_2 x$. For this $k$,
\[a_k = \exp(k \log k + O(k)) = \exp\left(\frac{\log x}{\log_2 x} (\log_2 x - \log_3 x) + O\left(\frac{\log x}{\log_2 x}\right)\right) = \frac{x}{L(x)^{1 + o(1)}}.\]
Therefore,
\[\sum_{n \leq x} G(n)^\beta \geq \frac{x^\beta}{L(x)^{\beta + o(1)}}.\]

For the upper bound, we want to find the maximum value of $a_k^\beta \pi(x, k)$. Note that
\[\max_k a_k^\beta \pi(x, k) \leq \max_k \left(\frac{a_k}{B_k}\right)^\beta B_k^\beta \pi(x, k) \leq \left(\max_k \frac{a_k}{B_k}\right)^\beta \max_k B_k^\beta \pi(x, k).\]
However,
\[\max_k B_k^\beta \pi(x, k) = L(x)^{o(1)} \sum_{n \leq x} F(n)^\beta = \frac{x^\beta}{L(x)^{2\beta + o(1)}}.\]

Recall that
\[a_k = \exp(k \log k + o(k \log_2 k)),\]
\[B_k = \exp(k \log k - (1 + o(1)) k \log_2 k).\]
Therefore,
\[a_k/B_k = \exp((1 + o(1)) k\log_2 k),\]
which implies that
\[a_k/B_k \leq L(x)^{1 + o(1)}\]
for all $k \leq (1 + o(1)) \log x/\log_2 x$.

Putting all this together gives us
\[\sum_{n \leq x} G(n)^\beta = \frac{x^\beta}{L(x)^{\beta + o(1)}}.\]
\end{proof}

\subsection{The $\beta < 1$ case}

We prove Theorem \ref{G beta < 1}, which is the sum of $G(n)^\beta$ for $\beta \in (0, 1)$.

\begin{customthm}{1.8} For all $\beta \in (0, 1)$,
\[\sum_{n \leq x} G(n)^\beta = x\exp\left((1 + o(1)) \frac{1 - \beta}{(\log 2)^{\beta/(1 - \beta)}} (\log_2 x)^{1/(1 - \beta)}\right).\]
\end{customthm}

\begin{proof} Our goal is to maximize $a_k^\beta \pi(x, k)$ for $k \leq (1 + o(1))(\log x/\log_2 x)$. By the Hardy-Ramanujan Inequality,
\[\pi(x, k) \ll \frac{x}{\log x} \left(\frac{(\log_2 x)^k}{(k - 1)!}\right) = x\exp(k \log_3 x - k \log k + k + O(\log_2 x)).\]
In addition,
\[a_k^\beta = \exp(\beta k \log k - \beta(1 + \log_2 2) k + O(1)).\]
Therefore,
\[a_k^\beta \pi(x, k) \ll x\exp(k \log_3 x - (1 - \beta)k \log k + (1 - \beta - \beta \log_2 2)k + O(\log_2 x)).\]

If $k \ll (\log_2 x)^{(1/(1 - \beta)) - \epsilon}$ for some $\epsilon > 0$, then
\[a_k^\beta \pi(x, k) \ll x\exp(k \log_3 x) \ll x\exp((\log_2 x)^{(1/(1 - \beta)) - \epsilon + o(1)}) = x\exp(o((\log_2 x)^{1/(1 - \beta)})),\]
which is contained in the error term of our desired formula. If $k \gg (\log_2 x)^{(1/(1 - \beta)) + \epsilon}$ for some $\epsilon$, then
\[\log_3 x - (1 - \beta)\log k \leq -(1 - \beta)\epsilon \log_3 x + O(1),\]
which implies that
\[a_k^\beta \pi(x, k) = o(x).\]

We may assume that $k = (\log_2 x)^{(1/(1 - \beta)) + o(1)}$. We write $k = C(\log_2 x)^{1/(1 - \beta)}$ with $C = (\log_2 x)^{o(1)}$ and optimize $C$. Note that we currently have
\[a_k^\beta \pi(x, k) \ll x\exp(C((1 - \beta)(1 - \log C) - \beta\log_2 2)(\log_2 x)^{1/(1 - \beta)} + O(\log_2 x)).\]
We maximize $C((1 - \beta)(1 - \log C) - \beta \log_2 2)$. As $C$ approaches $0$, this quantity approaches $0$ and as $C$ approaches $\infty$, the quantity approaches $-\infty$. At the maximum,
\[\frac{d}{dC} (C((1 - \beta)(1 - \log C) - \beta \log_2 2)) = -(1 - \beta)(\log C) - \beta \log_2 2 = 0.\]
Therefore,
\[C = e^{-\beta(\log_2 2)/(1 - \beta)}\]
and
\[k = (1 + o(1))(\log 2)^{-\beta/(1 - \beta)} (\log_2 x)^{1/(1 - \beta)}.\]
We have
\[a_k^\beta \pi(x, k) \ll x\exp\left((1 + o(1))(\log 2)^{-\beta/(1 - \beta)} (1 - \beta)(\log_2 x)^{1/(1 - \beta)}\right).\]

We now establish the right-hand side as a lower bound. By Theorem \ref{Pomerance bound 2},
\[\pi(x, k) = \frac{x}{k! \log x} \exp(k\log L + o(k)),\]
where
\[L_0 = \log_2 x - \log k - \log_2 k\]
In this case,
\[L_0 = \log_2 x - \frac{1}{1 - \beta} (\log_3 x) - (1 + o(1)) \log_4 x,\]
\[\log L_0 = \log_3 x + o(1).\]
So,
\[\pi(x, k) = x\exp(k \log_3 x - k \log k + k + O(\log_2 x)),\]
making our upper and lower bounds equal.
\end{proof}

\subsection{The $\beta = 1$ case}

Suppose $\beta = 1$. Warlimont proved that there exist constants $c_1$ and $c_2$ such that
\[x\exp\left(c_1 \frac{\log x}{\log_2 x}\right) \ll \sum_{n \leq x} G(n) \ll x\exp\left(c_2 \frac{\log x}{\log_2 x}\right).\]
We obtain an asymptotic for the sum by summing over $a_k \pi(x, k)$ for all $k$. First, we bound the sum for
\[k < \epsilon \frac{\log x}{\log_2 x}, \quad k > (1 - \epsilon) \frac{\log x}{\log_2 x}\]
for a small value of $\epsilon$. Then, we bound it for
\[\epsilon \frac{\log x}{\log_2 x} < k < (1 - \epsilon) \frac{\log x}{\log_2 x}.\]

\begin{lemma} As $\epsilon \to 0$, we have
\[a_k \pi(x, k) = x\exp\left(o\left(\frac{\log x}{\log_2 x}\right)\right)\]
for
\[k \leq \epsilon \frac{\log x}{\log_2 x}.\] 
\end{lemma}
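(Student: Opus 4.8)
It suffices to establish the upper bound
\[
a_k\pi(x,k)\le x\exp\!\Big(\big(\eta(\epsilon)+o(1)\big)\frac{\log x}{\log_2 x}\Big)\qquad\Big(k\le\epsilon\frac{\log x}{\log_2 x}\Big),
\]
with $\eta(\epsilon)\to0$ as $\epsilon\to0$, since this is exactly what the proof of Theorem~\ref{G beta = 1} uses; the matching lower bound follows from the two-sided form of Theorem~\ref{Pomerance bound 2} together with the trivial estimate $\pi(x,k)\ge\binom{\pi(x^{1/k})}{k}$. As the statement only concerns $\epsilon\to0$ we may assume $\epsilon<1/3$, and I would split the range of $k$ at $k=\log x/(\log_2 x)^2$.

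For $k\le\log x/(\log_2 x)^2$ the Hardy--Ramanujan inequality already suffices: combining it with $a_k=\exp(k\log k-(1+\log_2 2)k+O(\log k))$ and Stirling's formula, the two copies of $k\log k$ cancel and one is left, after routine simplification, with $a_k\pi(x,k)\ll x\exp(k\log_3 x+O(k+\log_2 x))$. In this range $k\log_3 x\le(\log_3 x/\log_2 x)(\log x/\log_2 x)$ and $k+\log_2 x\ll\log x/(\log_2 x)^2$, so the whole exponent is $o(\log x/\log_2 x)$, comfortably inside what we want.

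The substantive range is $\log x/(\log_2 x)^2\le k\le\epsilon\log x/\log_2 x$. Here Hardy--Ramanujan is too wasteful — it would leave a term $\epsilon\log_3 x\cdot\log x/\log_2 x$ in the exponent, and $\log_3 x\to\infty$ — so I would instead use Theorem~\ref{Pomerance bound 2}, which is legitimate since $(\log_2 x)^2\le k\le\log x/(3\log_2 x)$ once $\epsilon<1/3$. Writing $k=c\log x/\log_2 x$ with $c\in[1/\log_2 x,\epsilon]$, one has $\log k=\log_2 x-\log_3 x+\log c$, hence $\log_2 k=\log_3 x+o(1)$ and therefore $L_0=\log_2 x-\log k-\log_2 k=-\log c+o(1)$, uniformly in $c$; since $c\le\epsilon<1/3$ forces $-\log c\ge\log 3>1$, this gives $\log L_0=\log\log(1/c)+o(1)$. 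Feeding $\pi(x,k)=\frac{x}{k!\log x}\exp(k\log L_0+o(k))$ and $a_k/k!=\exp(-(\log_2 2)k+O(1))$ into $a_k\pi(x,k)$ yields
\[
a_k\pi(x,k)=x\exp\!\Big(\big(\Psi(c)+o(1)\big)\frac{\log x}{\log_2 x}\Big),\qquad \Psi(c):=c\big(\log\log(1/c)-\log_2 2\big),
\]
and since $c\log\log(1/c)\to0$ as $c\to0$ while $\log_2 2$ is a constant, $\Psi$ extends continuously to $c=0$ with $\Psi(0)=0$; hence $\eta(\epsilon):=\sup_{0<c\le\epsilon}\Psi(c)\to0$ as $\epsilon\to0$. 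Combining the two ranges finishes the proof.

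I expect the middle-to-large values of $k$ to be the main obstacle. The naive move is to bound $\log L_0$ by $\log_3 x$, which destroys the estimate; one has to carry out the exact computation $L_0\sim\log(1/c)$ so that the contribution of these $k$ collapses to the self-balancing quantity $c\log\log(1/c)\cdot(\log x/\log_2 x)$, which is small precisely because $c\log\log(1/c)\to0$. The other points demanding care are choosing the right bound for $\pi(x,k)$ on each sub-range (Theorem~\ref{Pomerance bound 2}, not Hardy--Ramanujan, near the top) and checking that the $o(\cdot)$'s in Theorem~\ref{Pomerance bound 2} and in the expansions of $L_0$ and $\log L_0$ are uniform in $k$, so that taking the supremum over $k$ is justified.
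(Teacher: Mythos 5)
Your proof is correct and follows essentially the same route as the paper's: the Hardy--Ramanujan inequality for small $k$ and Theorem 2.3 for the remaining range, with the decisive computation $L_0 = \log(1/c) + o(1)$ yielding the $\epsilon \log\log(1/\epsilon) \cdot \log x/\log_2 x$ bound in the exponent. The only differences are cosmetic: you split at $\log x/(\log_2 x)^2$ rather than at $(\log_2 x)^2$, and you control the top range by taking $\sup_{0 < c \le \epsilon} \Psi(c)$ directly, whereas the paper argues that $k \log L_0$ is increasing in $k$ and evaluates at $k = \epsilon \log x/\log_2 x$.
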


\begin{proof} Suppose $k < (\log_2 x)^2$. By Theorem $2.1$,
\[\pi(x, k) \ll \frac{x}{\log x} \cdot \frac{(\log_2 x)^{k - 1}}{(k - 1)!}.\]
In addition,
\[a_k \ll \frac{k!}{(\log 2)^k}.\]
Therefore,
\begin{eqnarray*}
a_k \pi(x, k) & \ll & \frac{x}{\log x} \cdot \frac{k(\log_2 x)^{k - 1}}{(\log 2)^k} \\
& \ll & x \exp((1 + o(1))k \log_3 x - \log_2 x) \\
& \ll & x\exp(O((\log_2 x)^2 \log_3 x)).
\end{eqnarray*}

Suppose $(\log_2 x)^2 \leq k \leq \epsilon(\log x/\log_2 x)$. By Theorem $2.3$,
\[\pi(x, k) = \frac{x}{k! \log x} \exp(k \log L_0 + o(k))\]
with $L_0 = \log_2 x - \log k - \log_2 k$. Therefore,
\[a_k \pi(x, k) = \frac{x}{(\log 2)^k} \exp(k \log L_0 + o(k)) \ll x\exp(k \log L_0 + o(k)).\]
We now maximize $k \log L_0$:
\[\frac{d}{dk} (k \log L_0) = \log L_0 + \frac{k}{L_0} \left(-\frac{1}{k} - \frac{1}{k \log k}\right) = \log L_0 - \frac{1}{L_0} - \frac{1}{L_0 \log k} > \log L_0 - \frac{2}{L_0}.\]
Note that $L_0$ decreases as $k$ increases. The minimum value of $\log L_0 - (2/L_0)$ occurs when $k$ is as large as possible. In this case,
\begin{eqnarray*}
L_0 & = & \log_2 x - \log k - \log_2 k \\
& = & \log_2 x - (\log_2 x - \log_3 x + \log \epsilon) - \log(\log_2 x - \log_3 x + \log \epsilon) \\
& \geq & \log_2 x - (\log_2 x - \log_3 x + \log \epsilon) - \log_3 x \\
& = & -\log \epsilon.
\end{eqnarray*}
Therefore, for sufficiently small $\epsilon$, we have $\log L_0 - (2/L_0) > 0$. The derivative of $k \log L_0$ with respect to $k$ is always positive. Because the derivative is always positive, we maximize $k \log L_0$ by setting $k$ to $\epsilon (\log x/\log_2 x)$. Therefore,
\[a_k \pi(x, k) \ll x\exp\left((\epsilon \log_2 (1/\epsilon) + o(1)) \frac{\log x}{\log_2 x}\right).\]
As $\epsilon$ approaches $0$, $\epsilon \log_2 (1/\epsilon)$ approaches $0$ as well.
\end{proof}

\begin{lemma} As $\epsilon \to 0$, we have
\[a_k \pi(x, k) = x\exp\left(o\left(\frac{\log x}{\log_2 x}\right)\right)\]
for
\[k \geq (1 - \epsilon) \frac{\log x}{\log_2 x}.\] 
\end{lemma}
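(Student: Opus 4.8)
The plan is to split the range $k \ge (1-\epsilon)\log x/\log_2 x$ according to how close $k$ is to the maximal order $\log x/\log_2 x$ and, in each piece, to combine Theorem \ref{Pomerance bound} with the asymptotic for the ordered Bell numbers, $a_k = \exp(k\log k - (1+\log_2 2)k + O(\log k))$. The mechanism that makes everything work is that the factor $k^{-k}$ in Pomerance's estimate cancels the $e^{k\log k}$ coming from $a_k$, so that $a_k\pi(x,k)$ collapses to $x$ times a factor that is genuinely small once $\epsilon$ is small.

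First I would treat
\[(1-\epsilon)\frac{\log x}{\log_2 x} \le k \le \frac{\log x}{\log_2 x}\left(1 - \frac{1}{\log_2 x}\right).\]
Writing $k = \lfloor c\log x/\log_2 x\rfloor$ with $c = k\log_2 x/\log x$, we have $c \in (1-\epsilon, 1-1/\log_2 x] \subseteq [1/3, 1-1/\log_2 x]$ once $\epsilon < 2/3$, so Theorem \ref{Pomerance bound} gives $\pi(x,k) = xk^{-k}(1-c)^k e^{O(k)}$. Multiplying by $a_k$ and cancelling $k^{-k}$ against $e^{k\log k}$ leaves
\[a_k\pi(x,k) = x\exp\left(k\log(1-c) + O(k)\right).\]
Since $c > 1-\epsilon$ forces $0 < 1-c < \epsilon$, hence $\log(1-c) < \log\epsilon$, the exponent is at most $k(\log\epsilon + O(1))$ with an absolute implied constant; choosing $\epsilon$ small enough that $\log\epsilon + O(1) < 0$ gives $a_k\pi(x,k) < x$.

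Next I would treat $k > \frac{\log x}{\log_2 x}(1 - 1/\log_2 x)$. Here Theorem \ref{Pomerance bound} gives $\pi(x,k) = e^{O(k)}$, and since $k = (1+o(1))\log x/\log_2 x$ this is $L(x)^{o(1)}$. For such $k$ one has $\log k = \log_2 x - \log_3 x + o(1)$, so $k\log k = \log x - (1+o(1))\frac{\log x\log_3 x}{\log_2 x}$ and therefore $a_k = \exp(k\log k + O(k)) = x/L(x)^{1+o(1)}$. Hence $a_k\pi(x,k) = x/L(x)^{1+o(1)} = o(x)$.

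Combining the two cases, $a_k\pi(x,k) \le x = x\exp(o(\log x/\log_2 x))$ throughout the range $k \ge (1-\epsilon)\log x/\log_2 x$ once $\epsilon$ is small, which is the assertion. I do not anticipate a genuine obstacle here: the only care needed is to keep $c$ inside the admissible interval for Pomerance's estimate, and to confirm that the $O(k) = O(\log x/\log_2 x)$ error terms are dominated, either by the large negative contribution $k\log\epsilon$ in the first range or by absorption into $L(x)^{o(1)}$ in the second (the latter because $\log x/\log_2 x = o(\log x\log_3 x/\log_2 x)$).
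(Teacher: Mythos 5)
Your proposal is correct and follows essentially the same route as the paper: the same split at $c = 1 - 1/\log_2 x$, with Theorem \ref{Pomerance bound} combined with Sklar's asymptotic $a_k = \exp(k\log k - (1+\log_2 2)k + O(\log k))$ in both ranges. In fact, in the first range you retain the factor $(1-c)^k \le \epsilon^k$, so the exponent picks up the term $k\log\epsilon$ and is negative once $\epsilon$ is small; this is cleaner than the paper's own writeup, which discards $(1-c)^k$ and whose final display in that case contains a slip (it substitutes $k \approx \epsilon\log x/\log_2 x$ rather than $k \asymp \log x/\log_2 x$), so your version is the correct way to make the paper's intended argument precise.
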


\begin{proof} Suppose $k = c(\log x/\log_2 x)$ with $1 - \epsilon \leq c \leq 1 - (1/\log_2 x)$. By Theorem $2.4$,
\[\pi(x, k) = xk^{-k} (1 - c)^k e^{O(k)} \leq xk^{-k} e^{O(k)}.\]
There exists a constant $C$ such that
\[\pi(x, k) \ll x\exp(-k \log k + Ck).\]
In addition,
\[a_k = \exp(k \log k - (1 + \log_2 2 + o(1)) k).\]
Hence,
\[a_k \pi(x, k) \ll x\exp((C - 1 - \log_2 2 + o(1)) k) = x\exp\left((1 + o(1))(C - 1 - \log_2 2)\epsilon \frac{\log x}{\log_2 x}\right).\]
As $\epsilon$ goes to $0$, the coefficient of $\log x/\log_2 x$ goes to $0$ as well.

Suppose $c > 1 - (1/\log_2 x)$. Using Theorem $2.4$ again, we have $\pi(x, k) = e^{O(k)}$. We have
\[a_k \pi(x, k) \leq \exp(k \log k + O(k)) \leq \exp\left(\frac{\log x}{\log_2 x} \log\left(\frac{\log x}{\log_2 x}\right) + O\left(\frac{\log x}{\log_2 x}\right)\right) \ll x.\]
\end{proof}

We now prove the main result. Recall that $W$ is the inverse of the function $h(z) = ze^z$ and that
\[\textrm{Ei}(x) = -\int_{-x}^\infty t^{-1} e^{-t} dt.\]

\begin{customthm}{1.2} Letting $w = W(1/(e \log 2))$, we have
\[\sum_{n \leq x} G(n) = x\exp\left((c + o(1))\frac{\log x}{\log_2 x}\right)\]
with
\[c = w(1 + e^w \mathrm{Ei}(-w)(w + \log w + \log_2 2)).\]
\end{customthm}

\begin{proof} Fix $\epsilon > 0$ and $c \in (\epsilon, 1 - \epsilon)$. Let $k = c \log x/\log_2 x$. We evaluate $\pi(x, k)$ using Theorem $2.6$. First, note that
\[\exp\left(O\left(\frac{k}{(\log_2 x)^{1/4}}\right)\right) = \exp\left(O\left(\frac{\log x}{(\log_2 x)^{5/4}}\right)\right) = \exp\left(o\left(\frac{\log x}{\log_2 x}\right)\right),\]
making it negligible.

We have
\[x^\alpha = x\exp\left((H(c) + o(1)) \frac{\log x}{\log_2 x}\right),\]
\[e^{\alpha k} = \exp\left((c + o(1)) \frac{\log x}{\log_2 x}\right),\]
\[(\log x)^k = \exp(k \log_2 x) = \exp(c\log x) = x^c,\]
\[(\log x)^{\alpha k} = x^{\alpha c} = x^c \exp\left((cH(c) + o(1)) \frac{\log x}{\log_2 x}\right).\]
We still need to evaluate $S(\alpha)^{k\alpha}$. Note that $\alpha = 1 + o(1)$. For $t$ sufficiently close to $1$,
\[\sin(\pi/t) = \pi(t - 1) + O((t - 1)^2).\]
Therefore,
\[S(\alpha) = \frac{\alpha}{\pi} (\pi(\alpha - 1) + O((\alpha - 1)^2)) = \alpha(\alpha - 1) + O(\alpha(\alpha - 1)^2) = (H(c) + o(1))\frac{1}{\log_2 x}.\]
So,
\[S(\alpha)^k = \exp\left(-c \frac{\log x \log_3 x}{\log_2 x} + (c \log H(c) + o(1)) \frac{\log x}{\log_2 x}\right).\]
Because $\alpha = 1 + O(1/\log_2 x)$,
\[S(\alpha)^{\alpha k} = \exp\left(-c \frac{\log x \log_3 x}{\log_2 x} + (c \log H(c) + o(1)) \frac{\log x}{\log_2 x}\right).\]
Putting all this together gives us
\[\pi(x, k) = x^{1 - c} \exp\left(c\frac{\log x \log_3 x}{\log_2 x} + (c + H(c) - cH(c) - c \log H(c) + o(1)) \frac{\log x}{\log_2 x}\right).\]

We also calculate the corresponding ordered Bell number term:
\[a_k = \exp(k \log k - (1 + \log_2 2 + o(1))k).\]
Here,
\[k \log k = c\frac{\log x}{\log_2 x} (\log_2 x - \log_3 x) = c \log x - c \frac{\log x \log_3 x}{\log_2 x},\]
\[(1 + \log_2 2 + o(1))k = (c(1 + \log_2 2) + o(1)) \frac{\log x}{\log_2 x}.\]
Thus,
\[a_k = x^c \exp\left(-c\frac{\log x \log_3 x}{\log_2 x} - (c(1 + \log_2 2) + o(1)) \frac{\log x}{\log_2 x}\right).\]

We now have
\[a_k \pi(x, k) = x\exp\left((H(c) - c(H(c) + \log H(c) + \log_2 2) + o(1)) \frac{\log x}{\log_2 x}\right).\]
We maximize the coefficient of $\log x/\log_2 x$ by determining when its derivative is zero. The derivative is
\[H'(c) - H(c) - \log H(c) - \log_2 2 - cH'(c) - (cH'(c)/H(c)).\]
However, $H'(c)$ satisfies the following differential equation \cite[p. $9$]{Ke}:
\[\frac{1}{H'(c)} = \frac{c}{H(c)} + c - 1.\]
Therefore, we want to solve
\[H(c) + \log H(c) = -(1 + \log_2 2).\]
This occurs at
\[H(c) = W(1/(e \log 2)).\]
In order to find $c$, we simply take $H^{-1}(H(c))$ and obtain
\[c = H^{-1}(w) = -we^w \mathrm{Ei}(-w)\]
with $w = W(1/(e \log 2))$. Therefore,
\[a_k \pi(x, k) = x\exp\left((w + we^w \mathrm{Ei}(-w)(w + \log w + \log_2 2) + o(1)) \frac{\log x}{\log_2 x}\right). \qedhere\]
\end{proof}

\section{Negative moments of $F$ and $f$}

We establish the negative moments of $F$, then show that our argument holds for $f$ as well.

\begin{customthm}{1.9} For $\beta > 0$,
\[\sum_{n \leq x} F(n)^{-\beta} = \frac{x}{\log x} \exp((1 + o(1))((1 + \beta)(\log_2 x)(\log_3 x)^\beta)^{1/(1 + \beta)}).\]
\end{customthm}

For the negative moments of $F$, we must refine our techniques. In Section $3$, we showed that if $\beta$ is positive, then
\[\sum_{n \leq x} F(n)^\beta = \exp(O(\log_2 x)) \max_k B_k^\beta \pi(x, k).\]
The same argument holds for $F(n)^{-\beta}$. However, the $\exp(O(\log_2 x))$ term is now larger than our error term. In order to handle this issue, we must obtain upper and lower bounds for
\[\sum_{n \leq x} F(n)^{-\beta} = \sum_k B_k^{-\beta} \pi(x, k)\]
separately.

\subsection{Upper bound}

Recall that the Hardy-Ramanujan Inequality states that there exists a constant $C$ such that
\[\pi(x, k) \ll \frac{x}{\log x} \cdot \frac{(\log_2 x + C)^{k - 1}}{(k - 1)!}.\]
Therefore,
\[\sum_{n \leq x} F(n)^{-\beta} \ll \frac{x}{\log x} \sum_k B_k^{-\beta} \frac{(\log_2 x + C)^{k - 1}}{(k - 1)!}.\]
Let $b_k (x) = B_k^{-\beta} (\log_2 x + C)^{k - 1}/((k - 1)!)$ so that
\[\sum_{n \leq x} F(n)^{-\beta} \ll \frac{x}{\log x} \sum_k b_k (x).\]

We maximize $b_k (x)$ by determining the value of $k$ for which
\[\frac{b_{k + 1} (x)}{b_k (x)} = 1.\]
By Corollary $3.6$,
\[\frac{b_{k + 1} (x)}{b_k (x)} = \left(\frac{B_{k + 1}}{B_k}\right)^{-\beta} \frac{\log_2 x + C}{k} \sim \left(\frac{\log k}{k}\right)^\beta \frac{\log_2 x}{k} = (\log_2 x) \frac{(\log k)^\beta}{k^{1 + \beta}}.\]
This ratio is $1$ when
\[k \sim \left(\frac{1}{(1 + \beta)^\beta} (\log_2 x) (\log_3 x)^\beta\right)^{1/(1 + \beta)}.\]
Call this quantity $k^*$. Note that the maximum value of $b_k (x)$ is $e^{o(k^*)} b_{k^*} (x)$. We also have $b_{k + 1} (x)/b_k (x) < 1/2$ for $k > 2^{1/(1 + \beta)} k^*$. Therefore,
\begin{eqnarray*}
\sum_{n \leq x} F(n)^{-\beta} & \ll & \frac{x}{\log x} \sum_k b_k (x) \\
& = & \frac{x}{\log x} \left(\sum_{k \leq 2^{1/(1 + \beta)} k^*} b_k (x) + \sum_{k > 2^{1/(1 + \beta)} k^*} b_k (x)\right) \\
& \leq & \frac{x}{\log x} (e^{o(k^*)}  2^{1/(1 + \beta)} k^* b_{k^*} (x) + b_{k^*} (x)) \\
& \ll & \frac{x}{\log x} e^{o(k^*)} b_{k^*} (x).
\end{eqnarray*}

Note that
\[b_{k^*} (x) = B_{k^*}^{-\beta} \frac{(\log_2 x + C)^{k^* - 1}}{(k^* - 1)!}.\]
We have
\begin{eqnarray*}
(\log_2 x + C)^{k^* - 1} & = & \exp((k^* - 1)\log(\log_2 x + C)) \\
& = & \exp(k^* \log(\log_2 x + C) + O(\log_3 x)) \\
& = & \exp\left(k^* \log_3 x + O\left(\frac{k^*}{\log_2 x} + \log_3 x\right)\right) \\
& = & \exp(k^* \log_3 x + o(k^*)), \\
(k^* - 1)! & = & \exp(k^* \log k^* - (1 + o(1)) k^*), \\
B_{k^*} & = & \exp(k^* \log k^* - k^* \log_2 k^* - (1 + o(1)) k^*).
\end{eqnarray*}
Therefore,
\begin{eqnarray*}
b_{k^*} (x) & = & \exp(k^* (\log_3 x - (\log k^* - 1 + o(1)) - \beta(\log k^* - \log_2 k^* - 1 + o(1)))) \\
& = & \exp(k^* (\log_3 x - (1 + \beta) \log k^* + \beta \log_2 k^* + (1 + \beta) + o(1))).
\end{eqnarray*}
Note that
\[\log k^* = \frac{1}{1 + \beta} \log_3 x + \frac{\beta}{1 + \beta} \log_4 x - \frac{\beta}{1 + \beta} \log(1 + \beta),\]
\[\log_2 k^* = \log_4 x - \log(1 + \beta) + o(1).\]
Hence,
\[b_{k^*} (x) = \exp((1 + o(1))(1 + \beta)k^*).\]
Substituting our expression for $k^*$ into this equation and multiplying it by $x/\log x$ gives us our desired upper bound.

\subsection{Lower bound and $f$}

We obtain a lower bound with the same formula as our upper bound. Theorem \ref{Sathe} states that if $k \ll \log_2 x$, then
\[\pi' (x, k) \gg \frac{x}{\log x} \cdot \frac{(\log_2 x)^{k - 1}}{(k - 1)!},\]
where $\pi'(x, k)$ is the number of squarefree $n \leq x$ satisfying $\omega(n) = k$. If we plug $k^*$ into this formula, we have the correct bound.

\begin{customthm}{1.9} For $\beta > 0$,
\[\sum_{n \leq x} f(n)^{-\beta} = \frac{x}{\log x} \exp((1 + o(1))((1 + \beta)(\log_2 x)(\log_3 x)^\beta)^{1/(1 + \beta)}).\]
\end{customthm}

\begin{proof}
By definition, $f(n) \geq F(n)$ for all $n$. Therefore,
\[\sum_{n \leq x} f(n)^{-\beta} \leq \sum_{n \leq x} F(n)^{-\beta}.\]
But by restricting the sum on the left-hand side to squarefree $n$ we have
\[\sum_{n\leq x} f(n)^{-\beta} \geq \sum_{\substack{n\leq x \\ n \text{ squarefree}}} f(n)^{-\beta} = \sum_{\substack{n \leq x \\ n \text{ squarefree}}} F(n)^{-\beta}.\]
but we have already shown the correct lower bound for the rightmost sum.
\end{proof}

\section{Negative moments of $G$ and $g$}

We establish upper bounds for the negative moments of $G$ using the same techniques that we used in the previous section. Arguments similar to those found in the previous subsection establish that these bounds are the correct asymptotic formulae for the negative moments of $G$ and $g$.

\begin{customthm}{1.10} For $\beta > 0$, the sums
\[\sum_{n \leq x} G(n)^{-\beta}, \quad \sum_{n \leq x} g(n)^{-\beta}\]
are both
\[\frac{x}{\log x} \exp((1 + o(1))(1 + \beta)(\log 2)^{\beta/(1 + \beta)} (\log_2 x)^{1/(1 + \beta)}).\]
\end{customthm}

\begin{proof} We now have
\[\sum_{n \leq x} G(n)^{-\beta} \ll \frac{x}{\log x} \sum_k a_k^{-\beta} \frac{(\log_2 x + C)^{k - 1}}{(k - 1)!},\]
for some constant $C$. Let $A_k (x) = a_k^{-\beta} (\log_2 x + C)^{k - 1}/((k - 1)!)$ so that
\[\sum_{n \leq x} G(n)^{-\beta} \ll \frac{x}{\log x} \sum_k A_k (x).\]

We determine when $A_{k + 1} (x)/A_k (x) \sim 1$. Recall that
\[a_k \sim \frac{1}{2 \log 2} \cdot \frac{k!}{(\log 2)^k}.\]
Hence,
\[\frac{A_{k + 1} (x)}{A_k (x)} \sim \left(\frac{\log 2}{k}\right)^\beta \frac{\log_2 x}{k} = (\log 2)^\beta \frac{\log_2 x}{k^{1 + \beta}}.\]
This function is equal to $1$ when
\[k = (\log 2)^{\beta/(1 + \beta)} (\log_2 x)^{1/(1 + \beta)}.\]
Call this quantity $k^*$.

Note that if $k > 2^{1/(1 + \beta)} k^*$, then $A_k (x) < A_{k^*}(x) /2$. In this case, we simply need to determine $A_{k^*} (x)$. We have
\begin{eqnarray*}
(\log_2 x + C)^{k^* - 1} & = & \exp(k^* \log_3 x + o(k^*)), \\
(k^* - 1)! & = & \exp(k^* \log k^* - (1 + o(1)) k^*), \\
a_{k^*} & = & \exp(k^* \log k^* - (1 + \log_2 2 + o(1)) k^*).
\end{eqnarray*}
Therefore,
\[A_{k^*} (x) = \exp(k^* (\log_3 x - (1 + \beta) \log k^* + \beta (1 + \log_2 2) + 1 + o(1))).\]
In this case,
\[\log k^* = \frac{1}{1 + \beta} \log_3 x + \frac{\beta \log_2 2}{1 + \beta},\]
which implies that
\[A_{k^*} (x) = \exp(k^* (1 + \beta + o(1))),\]
establishing our result.
\end{proof}

\section*{Acknowledgments}

The first author (M.J.) was partially supported by the Research and Training Group grant DMS-1344994 funded by the National Science Foundation. We thank Paul Pollack for helpful comments.


\begin{thebibliography}{99}
\bibitem{dB} N. G. de Bruijn, \emph{Asymptotic methods in analysis}, 3rd ed., Dover Publications, Inc., New York, 1981.
\bibitem{H} G. H. Hardy, The normal number of prime factors of a number $n$, Quart. J. Math. {\bf 48} (1917), 76--92.
\bibitem{HiT} A. Hildebrand and G. Tenenbaum, On the number of prime factors of an integer, Duke Math. J. {\bf 56:3} (1988), 471--501.
\bibitem{Hw} H.-K. Hwang, Distribution of the number of factors in random ordered factorizations of integers, J. Number Theory {\bf 81} (2000), 61--92.
\bibitem{K} L. Kalm{\' a}r, {\" U}ber die mittlere Anzahl der Produktdarstellungen der Zahlen, erste Mitteilung, Acta Litt. Sci. Szeged. {\bf 5} (1930--1932), 95--107.
\bibitem{Ke} S. Kerner, R{\' e}partition d'entiers avec contraintes sur les diviseurs, \emph{PhD thesis}, Universit{\' e} Henri Poincare Nancy $1$, 2002.
\bibitem{MW} L. Moser and M. Wyman, An asymptotic formula for the Bell numbers, Royal Soc. Canada {\bf 49} (1955), 49--54.
\bibitem{O} A. Oppenheim, On an arithmetic function (II), J. London Math. Soc. {\bf 2} (1927), 123--130.
\bibitem{P} P. Pollack, The distribution of numbers with many factorizations, submitted for publication.
\bibitem{Pom} C. Pomerance, On the distribution of round numbers, in \emph{Number Theory: Proceedings of the 4th Matscience Conference Held at Ootacamund, India}, ed. K. Alladi, Lecture Notes in Math. {\bf 1122} (1985), 173--200.
\bibitem{S} L. G. Sathe, On a problem of Hardy on the distribution of integers having a small number of prime factors, I-IV, J. Indian Math. Soc. {\bf 17} (1953), 63--141.
\bibitem{Se} A. Selberg, Note on a paper by L. G. Sathe, J. Indian Math. Soc. {\bf 18:1} (1954), 83--87.
\bibitem{Sk} A. Sklar, On the factorization of squarefree integers, Proc. Amer. Math. Soc. {\bf 3} (1952), 701--705.
\bibitem{SzTu} G. Szekeres and P. Tur{\' a}n, {\" U}ber das zweite Hauptproblem der ``Factorisatio numerorum", Acta Litt. Sci. Szeged {\bf 6} (1933), 143--154.
\bibitem{Wa} R. Warlimont, Factorisatio Numerorum with constraints, J. Number Theory {\bf 45} (1993), 186--199.
\end{thebibliography}
\end{document}